\pgfplotsset{%
   every tick label/.append style = {font=\tiny},
   every axis label/.append style = {font=\scriptsize}
}
\numberwithin{equation}{section}
\newtheorem{thm}{Theorem}[section]
\newtheorem*{thm*}{Theorem}
\newtheorem{cor}[thm]{Corollary}
\newtheorem{lem}[thm]{Lemma}
\newtheorem{prop}[thm]{Proposition}
\theoremstyle{definition}
\newtheorem{qst}[thm]{Question}
\newtheorem{dfn}[thm]{Definition}
\newtheorem{rmk}[thm]{Remark}
\newcommand{\N}{\mathds{N}}
\newcommand{\Z}{\mathds{Z}}
\newcommand{\R}{\mathds{R}}
\newcommand{\T}{\mathds{T}}
\newcommand{\diff}{\mathrm{d}}
\newcommand{\U}{\mathcal{U}}
\newcommand{\W}{\mathcal{W}}
\newcommand{\V}{\mathcal{V}}
\newcommand{\sss}{\mathrm s}
\newcommand{\llll}{\mathrm l}
\newcommand{\iii}{\mathrm i}
\newcommand{\A}{\mathbb{A}}
\newcommand{\dist}{\mathrm{dist}}
\newcommand{\fl}{\mathrm{flat}}
\newcommand{\con}{\mathrm{con}}
\newcommand{\kin}{{\mathrm{kin}}}
\begin{document}

\title[On the rigidity of Zoll magnetic systems on surfaces]{On the rigidity of Zoll magnetic systems on surfaces}

\author[L. Asselle]{Luca Asselle}
\address{Justus Liebig Universit\"at Giessen, Mathematisches Institut, Arndtstrasse 2, \newline\indent Raum 102, D-35392 Giessen, Germany}
\email{luca.asselle@ruhr-uni-bochum.de}

\author[C. Lange]{Christian Lange}
\address{Mathematisches Institut der Universit\"at K\"oln, Weyertal 86-90, Raum -103, 50931, K\"oln, Germany}
\email{clange@math.uni-koeln.de}

\date{July, 2019}
\subjclass[2000]{37J99, 58E10}
\keywords{Magnetic flows, Zoll systems, waists}

\begin{abstract}
In this paper we study rigidity aspects of Zoll magnetic systems on closed surfaces. We 
characterize magnetic systems on surfaces of positive genus given by constant curvature metrics and 
constant magnetic functions as the only magnetic systems such that the associated Hamiltonian flow is Zoll, i.e. every orbit is closed, on every energy level. We also prove the persistence of possibly degenerate closed geodesics under magnetic perturbations in different instances.
\end{abstract}

\maketitle

\section{Introduction}

Let $\Sigma$ be a closed oriented surface. A \textit{magnetic system} on $\Sigma$ is a pair $(g,f)$, where $g$ is a Riemannian metric on $\Sigma$ and $f:\Sigma \to \R$ is a smooth function (the \textit{magnetic function}). Every magnetic system defines a flow on $S\Sigma$, the unit tangent bundle 
of $\Sigma$, as we now briefly recall: a smooth arc-length parametrized curve $\gamma:I\rightarrow \Sigma$ is called a $(g,f)$-\textit{geodesic}, 
if it has geodesic curvature equal to $f$, that is, if it satisfies 
\begin{equation}
\nabla_{\dot \gamma} \dot \gamma = (f\circ \gamma) \cdot \dot \gamma^\perp,
\label{prescribedcurvature}
\end{equation}
where $\nabla$ is the Levi-Civita connection, and $\dot \gamma^\perp$ is the unit tangent vector such that the angle between $\dot \gamma$ and 
$\dot \gamma^\perp$ is $\frac \pi 2$ (recall that $g$ and the fixed orientation yield a well-defined way of measuring angles in each tangent plane, as well as an area form $\mu_g$ on $\Sigma$). The flow on $S \Sigma$ is given by 
$$\phi_{g,f}^t (q,v) = (\gamma(t),\dot \gamma(t)), \quad \forall t\in \R,$$
where $\gamma$ is the unique solution to \eqref{prescribedcurvature} with $\gamma(0)=q$, $\dot \gamma(0)=v$. Such a flow is 
of physical interest since it models the motion of a charged particle in $\Sigma$ under the effect of the 
magnetic field $f\mu_g$. The Legendre transform provides a conjugacy between $\Phi^t_{g,f}$ and 
the (restriction to the energy level $\frac 12$ of the) magnetic flow, that is the Hamiltonian flow on $T^*\Sigma$ defined by
$H_\kin(q,p)=\frac 12 |p|^2$ and the twisted symplectic form 
$$\omega_{g,f} := \diff p \wedge \diff q - \pi^* (f \mu_g),$$ 
where $\pi:T^*\Sigma\to \Sigma$ 
is the bundle projection. Moreover, for every $\lambda >0$ the reparametrization $\tilde \gamma(t):=\gamma(t/\lambda)$ 
yields a correspondence between $(g,\lambda f)$-geodesics and orbits of the magnetic flow
contained in $\{H_\kin=\frac{1}{2\lambda^2}\}$. Hence, the magnetic flow can be seen as the collection $\{\Phi^t_{g,\lambda f}\}$ of flows  associated with the 
family of magnetic systems $\{(g,\lambda f)\, |\, \lambda >0\}$; also we have a correspondence between high (low) energies and small (large) values of $\lambda$.
After the pioneering work of Arnol'd \cite{Arnold:1961} in 1960s, magnetic systems have received the attention of many outstanding mathematicians, such as Novikov, Ginzburg, 
and Contreras, among others. In particular, the problem of finding periodic solutions to \eqref{prescribedcurvature}, which 
we will refer to as \textit{closed} $(g,f)$-\textit{geodesics}, turned out to be extremely difficult, and many questions in the topic still remain open or 
only partially answered. We refer the reader to \cite{Asselle:2014hc,Contreras:2004lv,Ginzburg:1994,Taimanov:1992sm} and references therein 
for an account of the main contributions to the closed $(g,f)$-geodesics problem, particularly for the case of surfaces. 
We shall recall that, in contrast with geodesic flows, magnetic systems 
present very different behaviors for different values of $\lambda$; see e.g. \cite{Abbondandolo:2013is,Contreras:2006yo}. 

In this paper we will focus on the complementary problem, that is, in the study of systems $(g,f)$ whose flow \eqref{prescribedcurvature} is orbit-equivalent to a free $S^1$-action on $S\Sigma$, hence in particular for which all orbits are closed.

\begin{dfn}
A magnetic system $(g,f)$ is called \textit{Zoll} if  $\Phi^t_{g,f}$ has the same orbits as a free $S^1$-action on $S\Sigma$.
\end{dfn}

\begin{rmk}
\label{rmk:allclosed}
For $f\equiv 0$ we recover the notion of a Zoll metric, and in this case $\Sigma$ must be the two-sphere (a thorough discussion of such metrics can be 
found e.g. in \cite{Besse}). For $\Sigma\neq S^2$, Zoll magnetic systems can be equivalently defined as those pairs $(g,f)$ for which all
$(g,f)$-geodesics are closed (and contractible). Indeed, if all $(g,f)$-geodesics are closed, then $\Phi^t_{g,f}$ has by a theorem of Epstein \cite{Epstein:1972}
the same orbits as a fixed-point free $S^1$-action on $S\Sigma$, and hence defines a Seifert fibration of $S\Sigma$. 
It follows from \cite[Theorem 5.1]{Jankins} that this must be the standard, regular $S^1$-fibration (given by $S\Sigma \rightarrow\Sigma$) . 
For $\Sigma=S^2$, the celebrated Katok's example \cite{Benedetti:2016,Katok:1973mw} yields magnetic systems 
$(g,f)$ for which all $(g,f)$-geodesics are closed but whose flow on $S\Sigma$ is only orbit-equivalent to a semi-free $S^1$-action.\qed
\end{rmk}

Zoll magnetic systems exist for every closed oriented surface and, as established in \cite{Benedetti:2018c}, play a crucial role in local magnetic systolic inequalities: 
the minimal magnetic length of closed magnetic geodesics of magnetic systems close to a Zoll one is bounded from above in terms 
of a quantity depending only on the $g$-volume, the genus of $\Sigma$, and the integral of $f$ over $\Sigma$, and the upper bound is attained precisely when 
the magnetic system is Zoll. Also, recently the first author and Benedetti \cite{Asselle:2019a} showed that integrable magnetic systems on the 
two-torus admitting a global surface of section satisfy a sharp systolic inequality (see \cite{Abbondandolo:2018} for a similar result for Riemannian spheres of revolution). 
For the applications of such systolic inequalities it is therefore crucial to gain a better understanding of the space of Zoll magnetic systems.

Until last year, the only known examples of Zoll magnetic systems were pairs $(g_\con,f_\con)$,
with $g_\con$ a metric of constant curvature $K_\con$ and $f_\con>0$ any constant function such that\footnote{Taking $f_\con=K_\con=0$ for the two-torus yields the geodesic flow of the flat metric, and in particular all closed $(g_\con,f_\con)$-geodesics
are not contractible. A similar situation occurs for $f_\con^2+K_\con<0$ on higher genus surfaces, whereas for $f_\con^2+K_\con=0$ 
we retrieve the celebrated \textit{horocycle flow} of Hedlund (cf. \cite{Hedlund:1932am}), and in particular no closed $(g_\con,f_\con)$-geodesics at all.} 
$$f_\con^2 + K_\con >0.$$
A breakthrough came only very recently with \cite{Asselle:2019a}, in which explicit
non-trivial 1-parameter families of rotationally symmetric Zoll magnetic systems on certain flat tori are constructed. 
The result in \cite{Asselle:2019a} can be thought of as the first evidence of the flexibility of magnetic flows which are Zoll at a given energy. 
However, the trivial examples are Zoll at \textit{every} energy, that is, $(g_\con,\lambda f_\con)$ is Zoll for every $\lambda >0$ such that 
$\lambda^2f_\con^2 + K_\con>0$. Therefore, since magnetic flows strongly depend on the energy, it is natural to ask the following 

\begin{qst} \label{qst:flex_rigid}
Does the flexibility in \cite{Asselle:2019a} turn into rigidity, if one requires the magnetic flow to be Zoll at multiple energies?
\end{qst}

In order to make the question more precise, we recall that 
if $\Sigma$ has genus greater than or equal to $2$, then the two-form $f\mu_g$ is weakly-exact, 
that is, its lift to the universal cover is exact. 
We fix a primitive $\theta$ of the lift of $f\mu_g$ and define 
the \textit{Ma\~n\'e critical value of the universal cover} as 
\begin{equation}
c(g,f) := \inf_{u\in C^\infty(\tilde \Sigma)} \sup_{q\in \tilde \Sigma} \frac 12 |\theta_q - \diff_q u|^2,
\label{mane}
\end{equation}
where $|\cdot|$ denotes the dual norm on $T^*\tilde \Sigma$ induced by the lift of the metric $g$. It is well-known that $c(g,f)$ is always finite (see e.g. \cite{Asselle:2015ij}),
and vanishes if and only if $f\equiv 0$. Now we set
\begin{equation}
h(g,f) := \frac{1}{\sqrt{2c(g,f)}} \in (0,+\infty).
\label{oneovermane}
\end{equation}
We shall notice that  $c(g,f)$ is well defined also for the two-torus, and for any surface if $f\mu_g$ is exact. 
However, as we will see, for our purposes we can always assume that $f\mu_g$ is not exact. As it turns out, in this case
$c(g,f)$ is always infinite if $\Sigma$ is a two-torus, 
and if $\Sigma=S^2$ we have that any primitive of $f\mu_g|_{S^2\setminus \{p\}}$ is unbounded. 
Therefore, if $\Sigma$ is a two-sphere or a two-torus we set 
$$h(g,f):=0.$$ Also, if $\Sigma$ is a surface with genus at least two, then for every $\lambda < h(g,f)$ there exists 
a closed $(g,\lambda f)$-geodesic in every non-trivial free homotopy class; see \cite{Merry:2010}. In particular, $(g,\lambda f)$ cannot be Zoll
for $\lambda < h(g,f)$.

The main goal of this paper is to provide a positive answer to Question \ref{qst:flex_rigid} in the following form.

\begin{thm}
Let $(g,f)$ be a magnetic system on a surface with genus greater than or equal to one such that $(g,\lambda_n f)$ is Zoll for 
some bi-infinite sequence $\{\lambda_n\}_{n\in \Z}$ with $\lambda_n \downarrow h(g,f)$ for $n\to -\infty$ and $\lambda_n\uparrow +\infty$ for $n\to +\infty$. 
Then $g$ has constant curvature and $f$ is constant.
\label{thm:main}
\end{thm}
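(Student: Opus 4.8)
The plan is to distill the Zoll hypothesis into a single scalar invariant attached to each Zoll parameter, and then to read off the geometry from the behaviour of this invariant as $\lambda_n\uparrow+\infty$, using the lower end of the range only to put the system into a form where this is legitimate. First I would record the structural content of Remark~\ref{rmk:allclosed}: when $(g,\lambda f)$ is Zoll, all $(g,\lambda f)$-geodesics are closed and contractible, and they are precisely the critical points of the magnetic free-period action $S_\lambda(c)=\ell(c)-\lambda\int_{D_c}f\mu_g$ on the space of contractible loops, where $\ell(c)$ is the length and $D_c$ is a disc bounded by $c$. Since the closed orbits are the fibres of the standard fibration $S\Sigma\to\Sigma$, they form a single connected critical manifold; as the action is constant along any connected family of critical points, every closed $(g,\lambda f)$-geodesic has one and the same action $\mathcal S(\lambda)$. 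This is the invariant I will exploit: it couples the two orbit-dependent quantities $\ell(c)$ and the enclosed flux $\int_{D_c}f\mu_g$ into a quantity that does not depend on the orbit.

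The lower end $\lambda_n\downarrow h(g,f)$ serves to make this picture available and, crucially, to normalise $f$. For $\lambda<h(g,f)$ there are closed $(g,\lambda f)$-geodesics in every nontrivial free homotopy class by \cite{Merry:2010}, which is incompatible with the Zoll property, so controlling the range down to $h(g,f)$ is what keeps every orbit contractible and of contact type throughout. I would further argue that this same control forces $f$ to be nowhere vanishing, hence of constant sign, since at a zero of $f$ the high-energy orbits degenerate towards genuine geodesics and cannot all close; after possibly reversing orientation we may thus assume $f>0$.

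The rigidity is then extracted at $\lambda_n\uparrow+\infty$. For large $\lambda$ and $f>0$ the orbit through a point is confined to a disc of radius $\approx(\lambda f)^{-1}$ and is a small embedded loop of geodesic curvature $\approx\lambda f$. At a point $p$ where $f$ attains its maximum or its minimum one has $\nabla f(p)=0$, so a direct Gauss--Bonnet estimate gives $\ell\approx 2\pi/(\lambda f(p))$ and $\int_{D_c}f\mu_g\approx \pi/(\lambda^2 f(p))$, whence $\mathcal S(\lambda)\approx \pi/(\lambda f(p))$. Equality of the actions at the maximum and the minimum forces $f(\max)=f(\min)$, so $f\equiv f_0$ is constant. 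With $f$ constant, the orbit through every point $q$ is such a loop centred near $q$, and carrying the expansion to the next order makes $\mathcal S(\lambda)=\pi/(\lambda f_0)+c\,K(q)/\lambda^{3}+\dots$ with $c\neq0$; since $\mathcal S(\lambda)$ is independent of the orbit, $K(q)$ must be independent of $q$. Thus $g$ has constant curvature and $f$ is constant, as claimed.

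The main obstacle is entirely analytic and concentrated in the $\lambda\to+\infty$ asymptotics. I must prove the confinement and shape estimates uniformly, namely that for large $\lambda$ the closed orbit through (or near) a critical point of $f$ is a small embedded loop admitting the stated length and flux expansions, and I must secure the nonvanishing of the coefficient $c$ governing the $K$-dependent correction, since it is exactly this second-order term that upgrades constancy of $f$ to constancy of $K$. Equally delicate is the normalisation step, that is, showing that the Zoll property persisting down to $h(g,f)$ genuinely excludes zeros of $f$. Turning these adiabatic heuristics into uniform estimates, with the compactness furnished by the contact-type regime $\lambda>h(g,f)$, is where I expect the real work to lie.
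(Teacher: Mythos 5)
Your proposal has the right flavour in places (the constancy of the free-period action on prime orbits of a Zoll system is indeed true and is used in the paper, via the systolic--diastolic inequality of \cite{Benedetti:2018c}), but both of its load-bearing steps have genuine gaps. First, the normalisation step fails as stated: the claim that Zollness down to $h(g,f)$ forces $f$ to be nowhere vanishing, ``since at a zero of $f$ the high-energy orbits degenerate towards genuine geodesics and cannot all close,'' is unsubstantiated --- a Zoll flow is only orbit-equivalent to a free $S^1$-action, so periods may vary wildly from orbit to orbit, and nothing prevents the orbit through a zero of $f$ from closing up after an excursion of bounded length. The paper itself only proves $f\geq 0$ (via the oscillating-form result of \cite{Asselle:2015hc} together with an intersection/action argument), and then has to handle possible zeros of $f$ by a genuinely delicate device: a cutoff modification $\tilde f$ equal to $f$ on $\{f\geq e_1\}$ and to a positive constant on $\{f\leq e_0\}$, combined with the localization Lemma \ref{lem:magnetic_localization}, so that the short/long dichotomy of \cite[Theorem 7.13]{Benedetti:diss} (which requires $f>0$) becomes applicable. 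Your sketch has no working substitute for this. Second, and more seriously, your mechanism for constancy of the curvature --- the expansion $\mathcal S(\lambda)=\pi/(\lambda f_0)+c\,K(q)\lambda^{-3}+\dots$ with $c\neq 0$ --- is precisely the analysis the authors explicitly identify as open: they remark at the end of Section \ref{section:3} that extending their argument beyond constant-curvature metrics ``would require to unscramble the influences of an inhomogeneous magnetic field and an inhomogeneous metric on the drift motion,'' deferred to future work. Note also that when $f$ is constant and $K$ is not, the guiding-centre drift is along level sets of $K$, so away from critical points of $K$ the orbit through $q$ is \emph{not} a small loop centred near $q$, and your pointwise expansion of $\mathcal S(\lambda)$ in $q$ is unavailable; at best one could compare actions at the extrema of $K$, but then everything hinges on the unproven coefficient $c\neq 0$ and on uniform $o(\lambda^{-3})$ error control. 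Even your first-order comparison at the extrema of $f$ needs length and flux asymptotics sharper than the dichotomy window $\bigl(\tfrac{2\pi-\epsilon}{\lambda\max f},\tfrac{2\pi+\epsilon}{\lambda\min f}\bigr)$, which is all the cited literature provides.

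Structurally, your proof also discards the information the paper actually extracts from the lower end of the sequence. On the torus (where $h(g,f)=0$), the paper uses $\lambda_n\downarrow 0$ to prove $g$ is \emph{flat} before any large-$\lambda$ analysis: a non-flat metric has, in some class $\alpha\in\pi_1(\T^2)\setminus\{0\}$, a compact set $K_\alpha$ of minimizing closed geodesics which either evaluates into a proper subset of $\T^2$ (so local exactness of $f\mu_g$ plus a minimax estimate produces a persistent non-contractible $(g,\lambda f)$-geodesic, contradicting contractibility of Zoll orbits) or foliates $\T^2$, forcing flatness by Innami's theorem; only \emph{with flatness in hand} does the large-$\lambda$ short/long/intermediate-value argument run. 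On higher genus the paper's proof is entirely different from yours and uses \emph{only} $\lambda_n\downarrow h(g,f)$: Zollness forces the average magnetic curvature $K_{\lambda_n f}>0$, hence negative helicity, and monotonicity of $\lambda\mapsto\mathcal H(g,\lambda f)$ yields $h(g,f)\geq\lambda_{g,f}$, which combined with Paternain's inequality $h(g,f)\leq\lambda_{g,f}$ (equality iff constant curvature and constant $f$) concludes. Your use of the lower sequence merely for ``contractibility and contact type'' does not deliver either of these inputs, so as it stands the proposal does not close either half of the theorem.
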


We now give an account on the strategy of the proof of Theorem \ref{thm:main} for the two-torus, which throughout the paper will be identified with
the quotient $\T^2$ of $\R^2$ by the lattice $\Z\times \Z$:

\begin{itemize}
\item The decreasing sequence $\lambda_n \downarrow 0$ yields that $g$ is flat. This follows from the fact that closed non-contractible geodesics 
which globally minimize the length in their homotopy class cannot all disappear under magnetic perturbations, unless the metric is flat.
This fact will be proved in Section \ref{section:2}.
\item Once we know that the metric is flat, we look at the magnetic systems $(g_\fl,\lambda_n f)$ for $n\to +\infty$. In Section \ref{section:3}
we show that, if $f$ is not constant, then for $n$ sufficiently large we find both short and long closed $(g_\fl,\lambda_n f)$-geodesics. 
The fact that $(g_\fl,\lambda_n f)$ is Zoll then contradicts the dichotomy between short and long periodic orbits established in \cite[Theorem 7.13]{Benedetti:diss}.
\end{itemize}

The proof for higher genus surfaces is different and hinges on the relation between the helicity and the Ma\~n\'e critical value $c(g,f)$
proved in \cite{Paternain:2009}; see the end of Section \ref{section:3}. Actually, in this case  
we only need the sequence $\lambda_n \downarrow h(g,f)$ to conclude rigidity; see Theroem \ref{thm:highergenus}. Therefore, we are prompted to 
ask the following

\begin{qst}
For $\Sigma$ a two-torus, does Theorem \ref{thm:main} continue to hold if one only requires that $(g,\lambda_n f)$ be Zoll for some decreasing sequence
$\lambda_n\downarrow 0$? More generally, can one detect the precise threshold between rigidity and flexibility? Does this threshold depend 
on the topology of the surface?
\end{qst}

In case of the two-sphere, a statement in the spirit of Theorem \ref{thm:main} turns out to be more difficult to prove, the main reason being that the space of Zoll metrics on the two-sphere is infinite dimensional \cite{Guillemin:1976}. Nevertheless, in Section \ref{section:4} we prove some partial results in this direction: we show that the metric $g$ must ``generically'' be Zoll and that genericity can be dropped if one considers only rotationally invariant magnetic systems.  Further computations with rotationally invariant magnetic systems also support rigidity on $S^2$ which is why we make the following 
\newline
\newline
\textbf{Conjecture (Z).} Let $(g,f)$ be a magnetic system on $S^2$ such that $(g,\lambda f)$ is Zoll for all
$\lambda >0$. Then $g$ has constant curvature and $f$ is constant. 
\vspace{2mm}

In fact, after finishing this paper, we were able to confirm Conjecture (Z) for rotationally invariant magnetic systems of the form $(g,f_\con)$. The proof will appear in a forthcoming paper.

Let us also mention that Theorem \ref{thm:main} and Conjecture (Z) are related to two open problems about integrable dynamical systems on two-dimensional configuration spaces. 
The first one traces back to Birkhoff \cite{Birkhoff:1966xb} and aims at determining all metrics on the two-torus with an integrable geodesic flow. 
Despite several partial results (see \cite{Bialy:2011,Bialy:2015,Kozlov:1989} and references therein), it is as of now not known whether there 
are metrics other than Liouville metrics which gives rise to integrable geodesic flows. 
The second problem concerns exact magnetic flows on the two-torus that admit a first integral on all energy levels: In \cite{Agapov:2017} it is conjectured that such 
flows must be of a very particular type (cf. Example 1 in \cite{Agapov:2017}), and the conjecture is confirmed in the case of quadratic in momenta integrals.

%

%

Finally, in Section \ref{section:5} we prove a result of independent interest on the persistence of possibly degenerate closed geodesics under magnetic perturbations which we 
can formulate roughly speaking as follows: Let $g$ be a metric admitting a closed contractible geodesic which is a local minimizer of the length functional and is stable (namely, does not disappear after an arbitrarily small perturbation of the metric). Then such a closed geodesic will persist also under magnetic perturbations.
One major issue we have to overcome in the proof of such a statement is that the dynamics of a magnetic systems arising as perturbation of 
a geodesic systems is in general drastically different from the geodesic dynamics, even if the perturbation is arbitrarily small.  Also, 
we have to deal with critical sets which may have complicated topology (such as e.g. a Cantor set).

\begin{thm}
Let $\Sigma$ be a closed orientable surface, and let $(g,f)$ be a magnetic system on $\Sigma$. Suppose that $g$ possesses a contractible stable waist, that is, a 
closed geodesic
that locally minimizes the free-period action functional $\A$ in \eqref{freeperiod1}. Then there exists $\Lambda(g,f)>0$ such that 
for all $0<\lambda<\Lambda (g,f)$ there exists a closed contractible $(g,\lambda f)$-geodesic which locally minimizes the free-period Lagrangian action functional
$\A^\lambda$ in \eqref{freeperiod2}. Moreover, such closed $(g,\lambda f)$-geodesics can be chosen to lie in a small neighborhood of a waist for $g$. 
\label{thm:main2}
\end{thm}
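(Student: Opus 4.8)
The plan is to realize the perturbed closed geodesic as a genuine local minimizer of the free-period Lagrangian action $\A^\lambda$, exploiting that the magnetic contribution, while drastically altering the Euler--Lagrange equation, perturbs the \emph{value} of the action only by $O(\lambda)$. I would work on $M=W^{1,2}(\R/\Z,\Sigma)\times(0,\infty)$, where a pair $(x,T)$ represents the loop $t\mapsto x(t/T)$ of period $T$; in the convenient normalization $\A(x,T)=\tfrac{1}{2T}\int_0^1|x'|^2\,\diff s+\tfrac T2$ the critical points with $T>0$ are the arc-length closed geodesics of $g$ (so $T$ is the length and $\A=T$ at such a point), and those of $\A^\lambda$ are the arc-length closed $(g,\lambda f)$-geodesics. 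Since $\gamma_0$ is contractible it bounds a disk; choosing a simply connected neighborhood $\mathcal N\subset\Sigma$ of that disk I may write $f\mu_g=\diff\sigma$ on $\mathcal N$, so that for loops contained in $\mathcal N$ one has $\A^\lambda(x,T)=\A(x,T)-\lambda\int_0^1\sigma_{x}(x')\,\diff s$. The decisive structural point is that this extra term is $C^0$-small but \emph{not} $C^1$-small: its differential turns $\nabla_{\dot\gamma}\dot\gamma=0$ into $\nabla_{\dot\gamma}\dot\gamma=\lambda f\,\dot\gamma^\perp$, so $\gamma_0$ cannot be continued by an implicit-function/nondegeneracy argument. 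This is exactly the difficulty flagged in the introduction, and it forces the purely variational (minimization) approach rather than a perturbative one.

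Next I would encode stability as a barrier. The feature of a \emph{stable} waist that I use is the existence of an open neighborhood $\U\subset M$ of the critical set $\K\subset\{\A=a_0\}$ of nearby minimizers (with $a_0=\A(\gamma_0)=\ell_g(\gamma_0)$) such that the curves in $\overline\U$ all lie in $\mathcal N$ with period $T$ confined to a compact subinterval of $(0,\infty)$, such that $\inf_{\overline\U}\A=a_0$ is attained on $\K\subset\U$, and, crucially, such that for some $\delta>0$
$$\inf_{\partial\U}\A\ \ge\ a_0+2\delta.$$
This strict barrier is precisely what ``does not disappear under perturbation'' should mean, and it is completely insensitive to the internal structure of $\K$, which may be a Cantor set. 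I would take $\overline\U$ of ``box'' type, $\{\|x-\gamma_0\|_{C^0}\le\rho\}\cap\{T\in[T_0-\epsilon,T_0+\epsilon]\}$; weak closedness holds because $W^{1,2}\hookrightarrow C^0$ compactly, and on low sublevels of $\A$ the Dirichlet energy is automatically bounded, so $\overline\U\cap\{\A\le a_0+2\delta\}$ is weakly sequentially compact.

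The perturbation estimate is then immediate: on $\overline\U$ one has $|\A^\lambda-\A|\le\lambda\,\|\sigma\|_{C^0(\mathcal N)}\,\ell_{\max}=:\lambda C$, where $\ell_{\max}$ bounds the length of curves in $\overline\U$. Setting $\Lambda(g,f):=\delta/C$, for every $0<\lambda<\Lambda(g,f)$ I get $\inf_{\overline\U}\A^\lambda\le\A^\lambda(\gamma_0)\le a_0+\lambda C<a_0+\delta$ while $\inf_{\partial\U}\A^\lambda\ge a_0+2\delta-\lambda C>a_0+\delta$, hence $\inf_{\overline\U}\A^\lambda<\inf_{\partial\U}\A^\lambda$. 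Because $\A^\lambda$ is weakly lower semicontinuous (the Dirichlet term by convexity, the $\sigma$-term by the compact embedding $W^{1,2}\hookrightarrow C^0$) and $\overline\U\cap\{\A^\lambda\le a_0+\delta\}$ is weakly compact, the infimum of $\A^\lambda$ over $\overline\U$ is attained, and the barrier forces the minimizer $(x_\lambda,T_\lambda)$ into the interior $\U$. An interior minimizer of the $C^1$ functional $\A^\lambda$ is a critical point, that is, a closed $(g,\lambda f)$-geodesic which locally minimizes $\A^\lambda$, lies in $\mathcal N$ and is therefore contractible; shrinking $\rho$ yields the ``small neighborhood of a waist'' statement.

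The main obstacle is the compactness, which for free-period functionals is genuinely delicate: minimizing sequences can try to let the period $T\to0$ (collapse to a point, where $\A\to0$) or $T\to\infty$. I circumvent this by localizing --- $\overline\U$ keeps $T$ in a compact subinterval and the loops in the fixed compact set $\mathcal N$ --- and by using the barrier to keep the minimizing sequence away from $\partial\U$, so that no period degeneration occurs and the direct method (equivalently, the Palais--Smale condition on this region) applies. The two points that require genuine care are making the notion of stable waist quantitative enough to produce the uniform barrier $\delta$ and to guarantee that it survives on the \emph{entire} topological boundary of the box $\overline\U$, including the endpoints of the $T$-interval, and verifying weak lower semicontinuity and weak closedness in a manner compatible with the period variable. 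Neither is conceptually deep, but both must be arranged so that the whole argument is truly insensitive to the possibly fractal structure of the minimizing set $\K$, which is the feature that distinguishes this statement from a standard nondegenerate continuation.
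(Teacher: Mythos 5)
Your argument is correct and essentially the paper's argument in the case of an \emph{isolated} (strict) waist, where the paper itself says the proof is identical to that of Proposition \ref{prop:persistencetorus}: localize where $f\mu_g$ is exact, note the primitive term is $C^0$-small of order $\lambda$, and trap a minimizer of $\A^\lambda$ behind the action barrier. The genuine gap is in your encoding of stability. You posit a box-type neighborhood $\overline\U=\{\|x-\gamma_0\|_{C^0}\le\rho\}\cap\{T\in[T_0-\epsilon,T_0+\epsilon]\}$ on whose boundary $\inf_{\partial\U}\A\ge a_0+2\delta$, with $\rho$ small enough that all curves in $\overline\U$ lie in a fixed simply connected region $\mathcal N$. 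But the definition of a stable waist only supplies \emph{some} bounded neighborhood $\U$ of the critical circle satisfying \eqref{stableneighborhood}; when the waist is non-isolated, other minimizers with action exactly $a_0$ accumulate on $S^1\cdot\gamma_0$, so the boundary of every small $C^0$-box may contain, or be accumulated by, curves of action $a_0$, and no uniform barrier $\delta>0$ exists on such a box. Conversely, the neighborhood $\U$ furnished by stability cannot simply be shrunk: the set $K_\U$ of minimizers inside it can be a Cantor-like family whose images need not stay in any tube around $\gamma_0$ and may fail to avoid a proper subset of $\Sigma$, so the primitive $\sigma$ with the uniform bound $\|\sigma\|_{C^0}$ is not available on all of $\U$. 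You flag this tension yourself (``the two points that require genuine care''), but resolving it is not a technicality --- it is the mathematical content of the theorem, which is precisely about degenerate critical sets.

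The paper bridges exactly this gap with three steps you would need to reproduce: (i) a shortcut/surgery argument (Lemma \ref{lem:nearbydisjoint}) showing that minimizers in $K_\U$ close to each other either coincide geometrically or have \emph{disjoint} images, whence a uniform Lebesgue number $\rho$ for this dichotomy; (ii) the notion of the $\delta$-connected component $K_\delta(\gamma)$ of $\gamma$ in $K_\U$ (Definition \ref{dfn:deltaconnected}) together with Lemma \ref{lem:deltaconnected}, which upgrades pairwise disjointness along chains to the statement that every $\nu\in K_\delta(\gamma)$ is disjoint from $\gamma$ unless it lies on $S^1\cdot\gamma$; and (iii) an excision construction (Lemma \ref{lem:final}) that removes suitable balls around the minimizers \emph{not} $\delta$-connected to $\gamma$, producing a neighborhood $\V\subset\U$ with $\inf_{\partial\V}\A>\A(\gamma)$ and $K_\V=K_\delta(\gamma)$; since all curves in $K_\V$ are then contained in one of the two disks bounded by the embedded curve $\gamma$ (Jordan curve theorem), the closure of the evaluation image of $\V$ is a proper subset of the surface, and only at this point does your localization-plus-barrier scheme apply verbatim. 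Without steps (i)--(iii), your choice of $\overline\U$ assumes the conclusion of these lemmas, and the proof does not go through for non-isolated stable waists.
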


In fact, in our proof of Theorem \ref{thm:main2} the contractibilty assumption is only used in the case of the two-torus.
\newline
\newline
\textbf{Acknowledgments.} We warmly thank Alberto Abbondandolo and Stefan Suhr for many fruitful discussions. We are 
indebted to Gabriele Benedetti for suggesting us the reference \cite{Paternain:2009}. 
L.A. is partially supported by the DFG-grant AS 546/1-1 ``Morse theoretical
methods in Hamiltonian dynamics''. C.L. is partially supported by the DFG-grant SFB/TRR 191 ``Symplectic structures in Geometry, Algebra and Dynamics''.


\section{Zoll magnetic systems on $\T^2$ for small values of the parameter $\lambda$}
\label{section:2}

In this section we want to derive conditions on the metric $g$ for magnetic systems $(g,f)$ on $\T^2$ such that $(g,\lambda f)$ is 
Zoll for $\lambda>0$ sufficiently small, or, equivalently, such that the corresponding magnetic flow is Zoll for sufficiently large energies.
More precisely, we want to show that being Zoll for small values of $\lambda$ implies that the metric is flat. As it turns out, we 
don't need to require that $(g,\lambda f)$ is Zoll for \textit{all} $\lambda>0$ sufficiently small. Indeed, it is enough that $(g,\lambda_n f)$ is Zoll
for some sequence $\lambda_n \downarrow 0$.

\begin{prop}
Let $(g,f)$ be a magnetic system on $\T^2$ such that 
 $(g,\lambda_n f)$ is Zoll for some sequence $\lambda_n \downarrow 0$. Then $g$ is a flat metric.
\label{prop:lambdasmall}
\end{prop}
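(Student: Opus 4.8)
The plan is to argue by contradiction. Assuming $g$ is not flat, I will construct, for every sufficiently small $\lambda>0$, a \emph{non-contractible} closed $(g,\lambda f)$-geodesic. On the other hand, if $(g,\lambda_n f)$ is Zoll, then Remark~\ref{rmk:allclosed} (via Epstein's theorem together with the computation of the Seifert invariants) forces every $(g,\lambda_n f)$-geodesic to be closed \emph{and contractible}; in particular there is no non-contractible closed $(g,\lambda_n f)$-geodesic. Evaluating the construction at a small $\lambda_n$ then yields the contradiction, so $g$ must be flat. The curves to be perturbed are the length-minimizers in a fixed primitive class $c\in\pi_1(\T^2)\setminus\{0\}$, which exist for the unperturbed metric; the guiding idea is that closed $(g,\lambda f)$-geodesics are the critical points of the free-period action functional $\A^\lambda$, a perturbation of the functional $\A=\A^0$ whose minimizers over $c$ are precisely the closed $g$-geodesics in class $c$.

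First I would set up a \emph{localized} minimization, since global minimization of $\A^\lambda$ over $c$ necessarily fails: when $f\mu_g$ is not exact the magnetic term is unbounded below along translates of a loop by the deck transformations of the cyclic cover attached to $c$, which is exactly how all non-contractible geodesics can disappear (as on the flat torus with constant $f$). So I fix a minimizing closed $g$-geodesic $\gamma_0$ in class $c$ and minimize $\A^\lambda$ over the loops of class $c$ lying in a $C^0$-tube of radius $\delta$ about $\gamma_0$ (with $\delta$ smaller than half a cover period, so that translates leave the tube). Using $|\lambda\,\theta(\dot\gamma)|\le\tfrac14|\dot\gamma|^2+C\lambda^2$ the functional $\A^\lambda$ is coercive, and on the bounded-energy sublevel set around $\gamma_0$ it differs from $\A$ by $O(\lambda)$. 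If $\gamma_0$ is a \emph{strict} minimizer in the sense that $\inf\{\A(\gamma):\dist(\gamma,\gamma_0)=\delta\}>\A(\gamma_0)$, then for small $\lambda$ the minimum of $\A^\lambda$ over the closed tube is attained in its interior, and an interior minimizer is a critical point of $\A^\lambda$, i.e. a closed $(g,\lambda f)$-geodesic in the non-contractible class $c$. Compactness in $W^{1,2}$ on the bounded region, plus the usual care for the $S^1$-reparametrization symmetry (so that the critical set is a circle, not a point), make this precise.

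The main difficulty is to ensure that non-flatness produces such a strict minimizer. Here I would use the structure theory of minimal geodesics on $\T^2$: minimizers of a fixed class $c$ are pairwise disjoint, so their union is either all of $\T^2$ or leaves a gap. If for \emph{every} rational direction the minimizers foliate $\T^2$, then $g$ is flat: a foliation of $\T^2$ by closed geodesics puts the metric in Fermi form $\diff r^2+\phi^2\,\diff\theta^2$ with $\phi_r\equiv0$, so the curvature $K=-\phi_{rr}/\phi$ vanishes. Hence, for a non-flat $g$, some class $c$ has a gap, and a geodesic bounding that gap is the desired isolated minimizer. The delicate point, which I expect to be the heart of the proof, is the borderline Aubry--Mather situation where minimizers accumulate on $\gamma_0$ from one side (a flat strip) while a gap opens on the other, so that $\gamma_0$ is not strictly minimizing on both sides; there one must instead perturb a minimax closed geodesic inside the gap (a saddle that survives by its minimax characterization) or exploit the flat strip directly, and matching this case cleanly to the flatness dichotomy, together with the regularity needed for the Fermi computation, is where the real work lies.
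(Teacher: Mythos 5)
Your overall route coincides with the paper's: assume $g$ is not flat, perturb length minimizers in a non-trivial homotopy class into closed $(g,\lambda f)$-geodesics by a localized minimization of $\A^\lambda$ (localizing precisely so that a bounded primitive $\theta$ of $f\mu_g$ exists where the competitors live), and contradict the fact that a Zoll system on $\T^2$ has only contractible orbits (Remark \ref{rmk:allclosed}); your dichotomy (gap in some class versus foliation in every class, with flatness deduced from the foliation) is also the paper's. The genuine gap is exactly the step you flag yourself: you localize in a tube around a \emph{single} minimizer $\gamma_0$ and need $\inf\{\A(\gamma):\dist(\gamma,\gamma_0)=\delta\}>\A(\gamma_0)$. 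When minimizers accumulate on $\gamma_0$ — and the paper stresses that the minimizer set $K_\alpha$ may have no isolated points at all, e.g.\ be a Cantor set — no choice of $\delta$ yields this strictness, and your fallback via a minimax geodesic in the gap is not substantiated: your $O(\lambda)$ comparison controls nothing at a minimax level, the minimax critical set need not lie in the region where $\theta$ is defined and bounded, and no persistence result for degenerate saddle-type closed geodesics under magnetic perturbation is available (Ginzburg's persistence \cite{Ginzburg:1994}, which the paper discusses, requires non-degeneracy).

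The paper closes precisely this hole by changing the localization: instead of a tube around one geodesic, it takes a bounded neighborhood $\U_\alpha$ of the \emph{entire} compact set $K_\alpha$ of global minimizers in the class $\alpha$. The Palais--Smale condition (via Ekeland, cf.\ \cite[Lemma 3.1]{Abbondandolo:2014rb}) then gives $\inf_{\partial \U_\alpha}\A > \A(K_\alpha)+\epsilon$ with no strictness hypothesis on any individual minimizer — accumulation is harmless because all accumulating minimizers lie inside $\U_\alpha$ — and this works whenever $\mathrm{ev}(K_\alpha)$ is a proper subset of $\T^2$, i.e.\ in your ``gap'' case. In the residual case $\mathrm{ev}(K_\alpha)=\T^2$ for every $\alpha$, the paper either localizes around an isolated (strict) minimizer if one exists, or shows by the cut-and-paste argument that distinct non-isolated minimizers have disjoint images, so that $K_\alpha$ foliates $\T^2$ and flatness follows from Innami's theorem \cite{Innami:1986}. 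Note that your Fermi-coordinate derivation of flatness is also too quick: nearby leaves are a priori graphs over $\gamma_0$, not the equidistant curves $r=\mathrm{const}$ whose geodesic curvature is $\phi_r/\phi$, which is why the paper invokes Innami rather than a coordinate computation. With your tube replaced by a neighborhood of all of $K_\alpha$, and Innami cited for the foliated case, your argument becomes the paper's proof.
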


\begin{rmk}
The proof of Proposition \ref{prop:lambdasmall} actually shows that there exists $\lambda_-=\lambda_-(g,f)>0$ such that
if $(g,\lambda_0 f)$ is Zoll for some $\lambda_0 \in (0,\lambda_-)$ then $g$ is a flat metric. It would be interesting to see whether
$\lambda_-$ can be chosen independently of the magnetic systems in a neighborhood of $(g,f)$, 
and more generally if after some normalization (such as e.g. Area$(\T^2,g)=1$, $\|f\|_\infty\leq 1$, ...) 
the same assertion holds from some constant $\lambda_-$ independent of the magnetic system.
\end{rmk}

Proposition \ref{prop:lambdasmall} is an immediate corollary of Proposition \ref{prop:persistencetorus} below on the persistence of closed geodesics under magnetic perturbations, whose statement requires the introduction of some notation (for the details we refer e.g. to \cite{Abbondandolo:2013is,Contreras:2006yo}). We recall that closed arc-length parametrized geodesics on $(\T^2,g)$ one-to-one correspond to the critical points of the free-period Lagrangian action functional 
\begin{equation}
\A:H^1(\T,\T^2)\times (0,+\infty)\rightarrow \R, \quad \A(\Gamma,\tau) := \frac 1{2\tau} \int_0^1 |\dot \Gamma(s)|^2 \, \diff s + \frac{\tau}{2},
\label{freeperiod1}
\end{equation}
meaning that $\gamma:\R/T\Z\to S^2$ is an arc-length parametrized closed geodesic if and only if $(\Gamma,T)$ is a critical point of $\A$, where $\Gamma$ is given 
by $\Gamma(s) := \gamma(Ts)$.  
Here $H^1(\T,\T^2)$ denotes the space of one-periodic loops in $\T^2$ of Sobolev-class $H^1$, and it is well-known that its connected 
components are in bijection with elements (actually conjugacy classes) of $\pi_1(\T^2)$. Hereafter we will identify a pair $(\Gamma,\tau)$ with the corresponding curve 
$\gamma$, and write $\A(\gamma)$ instead of $\A(\Gamma,\tau)$ whenever more convenient.
An analogous variational principle is available also for exact magnetic systems (i.e. when $f\mu_g$ is exact) and allows us to detect closed $(g,\lambda f)$-geodesics 
as critical points of a suitable action functional $\A^\lambda$, whose precise definition will be recalled in \eqref{freeperiod2}.

For every homotopy class $\alpha\in \pi_1(\T^2)\setminus \{0\}$ we denote by $K_\alpha\neq \emptyset$ the compact set of global minimizers of $\A$ 
in the connected component of $H^1(\T,\T^2)\times (0,+\infty)$ determined by $\alpha$, that is 
$$\A(\gamma_\alpha) = \min_{\gamma\in \alpha} \A(\gamma) \quad \text{iff}\quad \gamma_\alpha \in K_\alpha.$$
We would like to stress that in general the set $K_\alpha$ does not have more structure than a compact set (e.g. it could be a Cantor set). 

\begin{prop}
Suppose that $g$ is not a flat metric on $\T^2$, and let $f:\T^2\to \R$ be any smooth function. Then there exist a
homotopy class $\alpha\in \pi_1(\T^2)\setminus \{0\}$, a bounded neighborhood $\U_\alpha$ of the set $K_\alpha$,
and a constant $\Lambda =\Lambda (g,f,\U_\alpha)>0$ such that for every $0<\lambda <\Lambda$ there exists a 
closed $(g,\lambda f)$-geodesic which is contained in $\U_\alpha$ and is a local minimizer of the functional $\A^\lambda$ in 
\eqref{freeperiod2}.
\label{prop:persistencetorus}
\end{prop}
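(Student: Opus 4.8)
The plan is to produce the desired closed $(g,\lambda f)$-geodesic as a \emph{local minimizer} of the perturbed free-period functional $\A^\lambda$ in \eqref{freeperiod2} inside a fixed homotopy class, by the direct method together with a barrier. Write $\A=\A^0$ and $m_\alpha:=\min_{\gamma\in\alpha}\A(\gamma)=\A|_{K_\alpha}$. On a \emph{bounded} neighborhood $\U_\alpha$ of $K_\alpha$ the magnetic part of $\A^\lambda$ is well defined even when $f\mu_g$ is not exact: lifting loops of class $\alpha$ to the universal cover $\R^2$ and integrating a primitive $\theta$ of the lift of $f\mu_g$ yields a single-valued continuous functional on $\U_\alpha$, since any two loops in $\U_\alpha$ cobound a thin cylinder across which the flux is finite and small, so the total-flux ambiguity never enters. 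Consequently $|\A^\lambda-\A|\le \lambda\,C$ on $\overline{\U_\alpha}$ for some $C=C(g,f,\U_\alpha)$, and the whole argument reduces to two points: (i) choosing $\alpha$ and $\U_\alpha$ so that $\A\ge m_\alpha+\delta$ on the topological boundary $\partial\U_\alpha$ for some $\delta>0$; and (ii) showing that $\A^\lambda$ attains its infimum over $\overline{\U_\alpha}$ in the relevant component.

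For the barrier (i) the hypothesis that $g$ is \emph{not} flat is used to select $\alpha$. I would invoke the dichotomy for minimal geodesics on $\T^2$ (in the sense of Morse and Hedlund): either the minimal geodesics of every direction foliate $\T^2$---which forces the absence of conjugate points and hence, by E.~Hopf's rigidity theorem, makes $g$ flat---or some primitive class $\alpha$ has a minimal-geodesic lamination with a gap. Non-flatness thus provides a primitive class $\alpha$ whose lamination is not a foliation. Taking $\U_\alpha$ to be a bounded neighborhood of $K_\alpha$ whose boundary lies inside the complementary gaps, every loop of $\partial\U_\alpha$ is forced to cross a gap and is therefore strictly longer than $m_\alpha$, yielding $\delta>0$. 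The possibly Cantor structure of $K_\alpha$ is accommodated by defining $\U_\alpha$ through sublevel sets of $\A$ in the gap regions rather than through a metric tube around $K_\alpha$.

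For (ii) note that $\A^\lambda$ is weakly lower semicontinuous on $H^1(\T,\T^2)\times(0,+\infty)$, since the Dirichlet term is convex and lower semicontinuous while the magnetic term, being of first order in $\dot\Gamma$, is weakly continuous; moreover non-contractibility bounds the period $\tau$ from below and the barrier bounds it from above, so a minimizing sequence stays in a weakly compact set and the only thing to rule out is that its weak limit leaves $\overline{\U_\alpha}$. Granting this, the choice $\Lambda:=\delta/(3C)$ gives for $0<\lambda<\Lambda$ the strict inequalities $\inf_{\overline{\U_\alpha}}\A^\lambda\le m_\alpha+\lambda C< m_\alpha+\delta-\lambda C\le \inf_{\partial\U_\alpha}\A^\lambda$, so the minimizer lies in the interior $\U_\alpha$; being an unconstrained local minimizer it is a critical point of $\A^\lambda$, i.e.\ it solves \eqref{prescribedcurvature} with $f$ replaced by $\lambda f$, and it is contained in $\U_\alpha$ by construction.

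The main obstacle is the compactness step (ii), and it is precisely where the hypothesis enters. On a flat torus the minimizers foliate, there is no gap, and a minimizing sequence can slide along the foliation while bending to exploit the magnetic term; the limiting Euler--Lagrange equation forces the velocity to rotate at a constant rate, which is incompatible with closing up in a non-trivial class, so the infimum is not attained and the minimal geodesics genuinely disappear---consistent with the fact that a constant field on a flat torus has only contractible (circular) magnetic geodesics. Making precise that the gap from (i) prevents this escape, uniformly as $\lambda\downarrow 0$ and in the presence of a possibly Cantor critical set $K_\alpha$, is the heart of the argument, and it is the torus instance of the persistence-of-waists phenomenon of Theorem \ref{thm:main2}.
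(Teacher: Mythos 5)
There is a genuine gap, and it sits exactly where you locate ``the heart of the argument.'' First, your construction of $\A^\lambda$ via a primitive on the universal cover is not single-valued, and the ``thin cylinder'' justification fails: since $f$ need not have zero mean (the paper stresses that $f\mu_g$ need not be exact, so $c(g,\lambda f)=+\infty$ for all $\lambda>0$), sliding a loop of class $\alpha$ once around $\T^2$ in a transverse direction is a closed path \emph{in the loop space} along which any continuously chosen lift returns translated by a deck transformation, so the lifted-primitive term changes by the total flux $\lambda\int_{\T^2} f\mu_g\neq 0$. Precisely in the hard case, when $\mathrm{ev}(K_\alpha)$ covers or nearly covers $\T^2$, every neighborhood $\U_\alpha$ of $K_\alpha$ contains such sweeps, so no single-valued functional of your form exists on it. The paper avoids this by a dichotomy: if $\mathrm{ev}(K_\alpha)\subsetneq\T^2$ for some $\alpha$, it chooses $\U_\alpha$ with $\mathrm{ev}(\overline{\U}_\alpha\times\T)\subsetneq\T^2$ and takes a \emph{bounded} primitive $\theta$ of $f\mu_g$ on that proper open subset, which makes \eqref{freeperiod2} honestly well defined; if instead every $K_\alpha$ surjects, it disposes of isolated minimizers by the local version of the same argument and then proves, by a cut-and-paste argument on lifts, that distinct non-isolated minimizers are disjoint, so $K_\alpha$ foliates $\T^2$ and $g$ is flat by Innami's theorem, a contradiction. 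Your Morse--Hedlund lamination-gap selection of $\alpha$ is a legitimate substitute for this second horn (a gap in the lamination makes $\mathrm{ev}(K_\alpha)$ proper, and it rests on the same Innami/Bangert rigidity the paper cites), but having produced the gap you should then define $\A^\lambda$ through the localized bounded primitive, not through the universal cover.

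Second, two analytic steps are asserted rather than proved. The barrier claim ``every loop of $\partial\U_\alpha$ is forced to cross a gap and is therefore strictly longer than $m_\alpha$'' confuses the boundary of a neighborhood in $H^1(\T,\T^2)\times(0,+\infty)$ with a region in $\T^2$: a point of $\partial\U_\alpha$ can have image deep inside the set swept by $K_\alpha$, being merely $H^1$-far from the critical circles. The correct statement, $\inf_{\partial\U_\alpha}\A\geq\A(K_\alpha)+\epsilon$, follows from the Palais--Smale property of $\A$ on the relevant component (the paper invokes \cite[Lemma 3.1]{Abbondandolo:2014rb}), not from geometry of gaps. Finally, the attainment of the infimum of $\A^\lambda$ in the interior of $\U_\alpha$, which you explicitly defer (``Granting this\dots'') and acknowledge as unresolved, is closed in the paper not by the direct method but by the Palais--Smale condition for $\A^\lambda$ on bounded subsets, combined with the strict inequality $\inf_{\partial\U_\alpha}\A^\lambda\geq\sup_{K_\alpha}\A^\lambda+\epsilon/2$ obtained from the Cauchy--Schwarz bound $|\int_0^1\theta_\Gamma(\dot\Gamma)\,\diff s|\leq\|\theta\|_\infty\|\dot\Gamma\|_2$ --- an estimate which itself requires $\|\theta\|_\infty<\infty$ and hence, once more, the localization you skipped. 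So the architecture of your proposal is sound and parallel to the paper's, but the two points where the non-flatness hypothesis and the variational analysis actually do work --- single-valuedness of the action and compactness/attainment --- are left without proof, and the first is repaired only by abandoning the universal-cover construction.
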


\begin{proof}[Proof of Proposition \ref{prop:lambdasmall}]
Suppose that $g$ is not flat and set $\lambda_-=\Lambda (g,f,\U_\alpha)$, where $\alpha\in \pi_1(\T^2)\setminus \{0\}$ is given by Proposition \ref{prop:persistencetorus}. 
For $n\in \N$ such that $\lambda_n\in (0,\lambda_-)$ we thus find a closed non-contractible $(g,\lambda_n f)$-geodesic, in contradiction with the fact that all $(g,\lambda_n f)$-geodesics must be contractible, see Remark \ref{rmk:allclosed}.
\end{proof}

Before proceeding with the proof, we would like to make two comments on Proposition \ref{prop:persistencetorus}. First, to establish if closed 
geodesics are stable under magnetic perturbations is a very natural question which has been already investigated in the past decades. 
Following \cite{Ginzburg:1994} we see that, while on the one hand a non-degenerate closed geodesic always ``survives'' when switching on a magnetic field, 
on the other hand the example of a flat torus with induced area form shows that we must impose some 
kind of condition on the closed geodesic for it not to disappear. Therefore, Proposition \ref{prop:persistencetorus}
can be seen as a first step towards the study of the stability of degenerate closed geodesics under magnetic perturbations. Another instance 
of this persistence will be discussed in Section \ref{section:4}.

Second, we would like to stress that in Proposition \ref{prop:persistencetorus} we do not 
require the magnetic function $f$ to have vanishing integral over $\T^2$, or, equivalently,
the two-form $f\mu_g$ to be exact. Hence, in general, for the Ma\~n\'e critical value of the universal cover defined in \eqref{mane} we have
$$c(g,0) = 0, \quad c(g,\lambda f)=+\infty \ \ \forall \ \lambda >0,$$
or, equivalently, for the constant $h(g,f)$ defined in \eqref{oneovermane}
$$h(g,0)=+ \infty, \quad h(g,\lambda f) = 0 \ \ \forall \ \lambda >0.$$
This can be rephrased by saying that the magnetic perturbation is \textit{not} small even if $\lambda$ is (arbitrary) small.

\begin{proof}[Proof of Proposition \ref{prop:persistencetorus}]
We first assume that the image of $K_\alpha$ under the evaluation map 
$$\text{ev} : H^1(\T,\T^2 ) \times (0,+\infty) \times \T \to \T^2, \quad \text{ev} (\gamma,s) := \gamma(\tau s),$$ 
is a proper compact subset of $\T^2$ for some $\alpha \in \pi_1(\T^2)\setminus \{0\}$. Clearly, under this assumption we can find a bounded neighborhood 
$\U_\alpha\subseteq H^1(\T,\T^2)\times (0,+\infty)$ of $K_\alpha$ such that 
$$U:=\text{ev}(\U_\alpha\times \T)\subset \text{ev}(\overline {\U}_\alpha \times \T) \subsetneq \T^2.$$ 
Thus, we have $f \mu_g|_U=\diff \theta$ for some bounded one-form $\theta\in \Omega^1(U)$. According to \cite{Contreras:2006yo} 
closed $(g,\lambda f)$-geodesics with image contained in $U$ correspond to critical points of the free-period Lagrangian action functional
\begin{equation}
\A^{\lambda} (\Gamma,\tau ) := \frac{1}{2\tau}\int_0^1 |\dot \Gamma(s)|^2 \diff s - \lambda \int_0^1 \theta_\Gamma(\dot \Gamma)\, \diff s + \frac \tau 2.
\label{freeperiod2}
\end{equation}
At the same time, since $\A$ satisfies the Palais-Smale condition on the connected components of $H^1(\T,\T^2)\times (0,+\infty)$, there exists $\epsilon >0$ such that 
$$\inf_{\partial \U_\alpha} \A > \A(K_\alpha)+\epsilon,$$
where $\A(K_\alpha)$ denotes the action of any element in $K_\alpha$ (see e.g. \cite[Lemma 3.1]{Abbondandolo:2014rb} for the proof). We now 
show that, if $\lambda>0$ is sufficiently small, then 
\begin{equation}
\inf_{\partial \U_\alpha} \A^\lambda  \geq \sup_{\gamma_\alpha \in K_\alpha} \A^\lambda(\gamma_\alpha)+\frac\epsilon2.
\label{equationlambdasmall}
\end{equation}
From the fact that $\A^\lambda$ satisfies the Palais-Smale condition on bounded subsets of $H^1(\T,S^2)\times (0,+\infty)$ we therefore 
deduce that for such values of $\lambda$ there exists a closed $(g,\lambda f)$-geodesic which is contained in $\U_\alpha$ and is a global minimizer of 
$\A^\lambda$ in $\U_\alpha$, thus completing the proof.

To prove \eqref{equationlambdasmall} we preliminarly compute using $\theta_q(v)=\langle X_q,v\rangle$ and the Cauchy-Schwarz inequality
$$\left |\int_0^1 \theta_\Gamma(\dot \Gamma)\, \diff s \right |\leq \int_0^1 |\theta_\Gamma(\dot \Gamma)|\, \diff s = \int_0^1 |\langle X_\Gamma,\dot \Gamma\rangle |\, \diff s\leq \|X_\Gamma\|_2 \|\dot \Gamma\|_2 \leq \|\theta\|_\infty  \|\dot \Gamma\|_2$$
and set 
$$r:=\sup_{(\Gamma,\tau)\in \overline{\U}_\alpha} \|\dot \Gamma\|_2, \quad \Lambda=\Lambda(g,f,\U_\alpha) := \frac{\epsilon}{4r\|\theta\|_\infty}.$$ 
For all $\lambda < \Lambda$, all $\gamma=(\Gamma,\tau)\in \partial \U_\alpha$, and all $\gamma_\alpha =(\Gamma_\alpha, \tau_\alpha) \in K_\alpha$ we thus have
\begin{align*}
\A^\lambda (\gamma) & = \A ( \gamma) - \lambda \int_0^1 \theta_\Gamma(\dot \Gamma)\, \diff s \\ 
				  &\geq \A(\gamma) - \lambda \|\theta\|_\infty\|\dot \Gamma\|_2 \\
				  &\geq \A(\gamma) - \frac \epsilon 4 \\
				  &> \A(\gamma_\alpha) + \frac 34 \epsilon \\
				  &\geq \A(\gamma_\alpha) + \lambda \|\theta\|_\infty\|\dot \Gamma_\alpha \|_2 + \frac \epsilon 2 \\
				  &\geq \A^\lambda (\gamma_\alpha) + \frac \epsilon 2,
\end{align*}
and \eqref{equationlambdasmall} follows taking the infimum over $\gamma\in \partial \U_\alpha$ and the supremum over $\gamma_\alpha \in K_\alpha$.

Thus, we are left to consider the case in which each set $K_\alpha$ is mapped surjectively onto $\T^2$ by the evaluation map ev. In this case we will show 
that the metric $g$ must be flat, in contradiction with the assumption. 

We preliminarly observe that the same proof as above allows us to find closed $(g,\lambda f)$-geodesics close to any ``isolated'' $\gamma_\alpha\in K_\alpha$. Here by isolated we mean that $\gamma_\alpha$ is a strict local (actually global) minimizer of $\A$, that is, 
there exists a neighborhood $\U$ of the critical circle $\T\cdot \gamma_\alpha :=\{\gamma_\alpha(\cdot+s)\, |\, s\in \T\}$ such that
$$\inf_\U \A = \A(\gamma_\alpha), \quad \text{and}\ \ \U\cap \A^{-1}(\A(\gamma_\alpha)) = \T\cdot \gamma_\alpha.$$
Therefore, we can further assume that all $\gamma_\alpha\in K_\alpha$ are non-isolated. We claim that, under this assumption, for any $\alpha\in \pi_1(\T^2)\setminus \{0\}$ 
the set $K_\alpha$ yields a simple foliation of $\T^2$ by closed geodesics. Observe that this immediately implies 
that the metric is flat by a theorem of Innami \cite{Innami:1986} (see also \cite{Bangert:1994}). 

To prove the last assertion we show that two elements $\gamma_\alpha,\nu_\alpha\in K_\alpha$ that are not the same geometric curve
must have disjoint image. Thus, let us suppose that $\nu_\alpha,\gamma_\alpha\in K_\alpha$ intersect transversally, and denote with $\rho_\alpha$, $\tau_\alpha$ their periods.
Then, we can find lifts $\tilde \nu_\alpha,\tilde \gamma_\alpha:\R\to \R^2$ of $\nu_\alpha$, $\gamma_\alpha$ to $\R^2$ respectively such that 
$$ \tilde \nu_\alpha |_{[0,\rho_\alpha]} (\cdot) \ \cap\ \tilde \gamma_\alpha|_{[0,\tau_\alpha]}(\cdot) \neq \emptyset.$$
Observe that $\tilde \nu_\alpha$, $\tilde \gamma_\alpha$ are embedded. Also, since $\nu_\alpha$ and $\gamma_\alpha$ belong to the same homotopy class, 
up to shifting the base point of $\tilde \nu_\alpha$ we can suppose that 
there exist $0\leq s_1<s_2<\rho_\alpha$ and $0\leq t_1<t_2<\tau_\alpha$ such that 
$$\tilde \nu_\alpha(s_i)= \tilde \gamma_\alpha(t_i), \quad \text{for} \ i=1,2.$$
We now define the piecewise smooth curves 
\begin{align*}
\tilde \eta_1 &:= \tilde \nu_\alpha |_{[0,s_1]} \,\# \, \tilde \gamma_\alpha |_{[t_1,t_2]} \, \# \, \tilde \nu_{\alpha}|_{[s_2,\rho_\alpha]},\\ 
\tilde \eta_2 &:= \tilde \gamma_\alpha |_{[0,t_1]} \, \# \, \tilde \nu_{\alpha} |_{[s_1,s_2]}\, \#\, \tilde \gamma_\alpha |_{[t_2,\tau_\alpha]}.
\end{align*}
By construction, $\tilde \eta_1$ and $\tilde \eta_2$ project to closed curves $\eta_1$ and $\eta_2$ on $\T^2$ in the homotopy 
class $\alpha$ that satisfy
$$\A(\eta_1) + \A(\eta_2) = \A(\nu_\alpha) + \A(\gamma_\alpha) = 2\A(\gamma_\alpha).$$
It follows that at least one of them, say $\eta_1$, has action less or equal to $\A(\gamma_\alpha)$. Therefore, $\eta_1$ is a closed geodesic (for it is a global minimizer of $\A$ in the homotopy class $\alpha$), hence in particular smooth, a contradiction. 

Using this we readily see that a non-isolated $\gamma_\alpha$ must be:
\begin{enumerate}
\item embedded if $\alpha$ is a primitive class in $\pi_1(\T^2)\setminus \{0\}$, or
\item the $m$-th iterate of some $\gamma_\beta \in K_\beta$, if $\alpha =m\cdot \beta$ for some $n\in \N$ and some primitive class $\beta\in \pi_1(\T^2)$.
\end{enumerate}

Indeed, if $\gamma_\alpha$ had a transversal self-intersection, then the image of elements in $K_\alpha$ sufficiently close to $\gamma_\alpha$
would not be disjoint from the image of $\gamma_\alpha$. It is now straightforward to see that under our assumptions the set $K_\alpha$ yields a simple 
foliation of $\T^2$ by geodesics if $\alpha$ is a primitive class in $\pi_1(\T^2)$ and an $m$-fold foliation by geodesics for some $m\in \N$ otherwise.
This completes the proof.
\end{proof}

\section{Zoll magnetic systems on flat tori for large values of $\lambda$}
\label{section:3}

Let $(g,f)$ be a magnetic system on $\Sigma=\T^2$ as in the statement of Theorem \ref{thm:main}. In virtue of Proposition \ref{prop:lambdasmall} we can assume 
that $g=g_\fl$ is a flat metric. In this section, by looking at large values of $\lambda$, we show that the magnetic function $f$ must be constant as well, 
thus completing the proof of Theorem \ref{thm:main} in the case $\Sigma=\T^2$. 

The theorem is an immediate corollary of Proposition \ref{prop:lambdasmall} combined with the following 

\begin{prop}
Let $(g_\fl,f)$ be a magnetic sytem on $\T^2$ such that $(g,\lambda_n f)$ is Zoll for some sequence $\lambda_n\uparrow +\infty$. 
Then $f$ is constant. 
\label{prop:fconstant}
\end{prop}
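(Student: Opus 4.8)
The plan is to exploit the large-$\lambda$ (equivalently, high-energy or small-magnetic-reparametrization) regime, where the magnetic system $(g_\fl, \lambda f)$ is a singular perturbation of the geodesic flow of a flat torus. For $\lambda$ large, the reparametrization $\tilde\gamma(t) = \gamma(t/\lambda)$ turns a $(g_\fl,\lambda f)$-geodesic into an orbit of the magnetic flow on a \emph{low} energy level $\{H_\kin = \tfrac{1}{2\lambda^2}\}$. On a flat torus, at low energy the magnetic field $f\mu_{g_\fl}$ dominates the kinetic term, so the dynamics is governed by the zero-energy limit: the guiding-center (slow) motion follows the level sets of $f$, while the fast motion is an approximate Larmor rotation of small radius $\sim 1/(\lambda |f|)$. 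This is precisely the regime analyzed in Benedetti's dissertation, and the mechanism producing the contradiction is the dichotomy between \emph{short} orbits (Larmor-type circles concentrating near nondegenerate extrema/critical points of $f$, with period $\to 0$) and \emph{long} orbits (those whose guiding center drifts along a regular level set, traversing a macroscopic loop).

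\textbf{The key steps, in order.} First I would fix, after Proposition~\ref{prop:lambdasmall}, that $g=g_\fl$ is flat and argue by contradiction: suppose $f$ is nonconstant. Since $\T^2$ is compact, $f$ then attains a maximum and a minimum at distinct values, and one extracts a nondegenerate critical point after, if necessary, a small perturbation argument; near such a critical point the magnetic flow at energy $\tfrac{1}{2\lambda_n^2}$ admits closed orbits of \emph{vanishing} magnetic length as $\lambda_n \to +\infty$, since the Larmor radius shrinks like $1/(\lambda_n|f|)$. This produces the family of \emph{short} closed $(g_\fl,\lambda_n f)$-geodesics. Second, using that $f$ is nonconstant, I would locate a regular value of $f$ whose level set contains a component along which the guiding-center drift is nonzero; the corresponding nearly-periodic guiding-center loop, lifted to the actual magnetic dynamics, yields a closed orbit whose length stays bounded below by a positive constant independent of $n$ (indeed, it must traverse a fixed macroscopic loop). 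These are the \emph{long} closed geodesics. Third, I would invoke \cite[Theorem 7.13]{Benedetti:diss}, which asserts that for a Zoll magnetic system on a surface at a given (here high-$\lambda$) energy, the closed orbits cannot simultaneously exhibit both arbitrarily short and uniformly long magnetic lengths: being Zoll forces all orbits to lie in a single free $S^1$-action, so their periods (and magnetic lengths) are commensurable and uniformly comparable. The coexistence of short and long orbits for $(g_\fl,\lambda_n f)$ with $\lambda_n\uparrow +\infty$ thus contradicts the Zoll hypothesis, forcing $f$ to be constant.

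\textbf{The main obstacle} I expect is making the short/long dichotomy quantitatively rigorous in a way that feeds cleanly into the cited theorem. The short orbits require a careful averaging/normal-form analysis near critical points of $f$ to guarantee genuine closed orbits (not merely approximately closed ones) of the full flow at each $\lambda_n$ — this is where one must control the error in the adiabatic approximation and possibly appeal to a fixed-point or implicit-function argument on a Poincaré section. The long orbits are conceptually easier but still require verifying that the guiding-center drift is non-trivial on some regular level set when $f$ is nonconstant, which uses that the drift velocity is proportional to $\nabla f / f^{2}$ (up to the area form), hence nonzero off the critical set. If the technical content of both constructions is already packaged inside \cite{Benedetti:diss}, the proof reduces to citing the precise statement and checking its hypotheses in the flat-torus case; otherwise the concentration estimates near $\crit(f)$ form the genuine heart of the argument.
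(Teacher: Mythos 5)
There is a genuine gap at the heart of your third step. You read \cite[Theorem 7.13]{Benedetti:diss} as saying that a Zoll system ``cannot simultaneously exhibit both arbitrarily short and uniformly long'' closed orbits, but the theorem asserts no such thing: it is a dichotomy for \emph{arbitrary} magnetic systems with $f>0$, stating that each prime orbit is either short and simple or long and self-intersecting, and the coexistence of both types is perfectly consistent with it (your heuristic that Zollness makes all periods ``commensurable and uniformly comparable'' is also false --- a free $S^1$-action constrains the orbits' homotopy type, not their lengths). The paper's actual contradiction requires an extra, essential step you are missing: connect the initial conditions of a short orbit and a long orbit by a path in $S\T^2$; since for a Zoll system the period varies smoothly along this path, the intermediate value theorem produces a closed orbit whose length lies in the \emph{gap} between the two regimes, and it is this intermediate orbit that violates the dichotomy. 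Without the connectedness/intermediate-value argument your short and long orbits prove nothing.

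Two further points. First, your long-orbit construction is both harder than needed and, as you set it up, inadmissible: you cannot perturb $f$ to make a critical point nondegenerate, since the Zoll hypothesis is about the given system; and closing up the guiding-center drift orbit by averaging plus a fixed-point argument is exactly the technical obstacle the paper sidesteps --- by Lemma \ref{lem:long_geodesic} an elementary curvature-comparison argument produces a merely \emph{self-intersecting} orbit near a regular point of $f$, which the Zoll hypothesis then forces to be closed, and the dichotomy forces to be long. (Similarly, the short orbits come from \cite{Ginzburg:1987lq} with no nondegeneracy assumption.) Second, you never address the sign of $f$, yet \cite[Theorem 7.13]{Benedetti:diss} requires $f>0$ and your Larmor-radius estimate $\sim 1/(\lambda|f|)$ degenerates on $\{f=0\}$. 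The paper first proves $f\geq 0$ (via oscillating forms, \cite[Theorem 4.1]{Asselle:2015hc}, constancy of the action on prime orbits of a Zoll system, and the surgery argument from the end of the proof of Proposition \ref{prop:persistencetorus}), and then handles the delicate case $f\geq 0$ but not strictly positive by replacing $f$ with a cut-off modification $\tilde f\geq e_0>0$ and using the localization Lemma \ref{lem:magnetic_localization} to ensure the intermediate orbit stays in the region where $f=\tilde f$. These reductions are not optional refinements; without them the cited theorem simply does not apply to a nonconstant $f$ that changes sign or vanishes.
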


\begin{rmk}
As for Proposition \ref{prop:lambdasmall}, we can improve the statement of Proposition \ref{prop:fconstant} by saying that there exists $\lambda_+=\lambda_+(g,f)>0$ such that, if 
$(g_\fl,\lambda_1 f)$ is Zoll for some $\lambda_1\in (\lambda_+,+\infty)$, then $f$ must be a constant function. Also here it would be 
very interesting to see if - after normalization - the constant $\lambda_+$ can be chosen independently of the magnetic system.
\end{rmk}

The idea of the proof is that for $\lambda>0$ large enough we always find short periodic $(g_\fl,\lambda f)$-geodesics, 
and if $f$ is not constant also long ones (close to a regular level of $f$). 
The fact that $(g_\fl,\lambda f)$ is Zoll then contradicts the dichotomy between long and short periodic orbits established in \cite[Theorem 7.13]{Benedetti:diss}. 
We shall notice that \cite[Theorem 7.13]{Benedetti:diss} deals only with magnetic systems on the two-sphere; however, the proof is based on an argument of
Bangert \cite{Bangert:1986} which works for flows converging to a free $S^1$-action, and hence extends e.g. to all magnetic systems on closed surfaces.

\begin{thm*}{\cite[Theorem 7.13]{Benedetti:diss}}
Let $(g,f)$ be a magnetic system on $\Sigma$, with $f>0$. For every $\epsilon>0$ and $n\in\N$ there is $\Lambda=\Lambda(\epsilon,n)\in (0,+\infty)$ such that 
for every $\lambda \in (\Lambda,+\infty)$ a periodic prime $(g,\lambda f)$-geodesic is either a simple curve with length in $(\frac{2\pi-\epsilon}{\lambda \max f}, \frac{2\pi +\epsilon}{\lambda \min f})$, or has at least $n$ self-intersections and length larger than $\frac{1}{\lambda \epsilon}$.
\end{thm*}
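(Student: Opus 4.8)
The plan is to work in the strong-field regime $\lambda\to+\infty$ directly on the level of equation \eqref{prescribedcurvature} with $f$ replaced by $\lambda f$: a $(g,\lambda f)$-geodesic has geodesic curvature $\lambda f$ and so, on the spatial scale $1/\lambda$, looks like a small geometric circle of radius $\approx 1/(\lambda f)$. The first move is to rescale. Zooming into a normal coordinate patch around any point $q$ by the factor $\lambda$ (equivalently, rescaling the flow parameter, $s=\lambda t$), the metric converges to the Euclidean one and \eqref{prescribedcurvature} converges in $C^\infty$ on compact parameter intervals to the equation of a Euclidean circle of curvature $f(q)$. Globally this is the statement underlying \cite{Bangert:1986} that $\Phi^{s/\lambda}_{g,\lambda f}$ converges as $\lambda\to\infty$ to the free ``Larmor'' $S^1$-action on $S\Sigma$ rotating each fibre $S_q\Sigma$ at angular speed $f(q)$, whose orbits are exactly the fibres of $S\Sigma\to\Sigma$, closed with $s$-period $2\pi/f(q)$. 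Because $f>0$, the velocity of any prime periodic $(g,\lambda f)$-geodesic $\gamma$ turns monotonically, so $\gamma$ carries a well-defined integer $m=m(\gamma)\ge 1$, the number of complete revolutions of $\dot\gamma$, i.e.\ of Larmor loops. By the theorem on turning tangents, $\lambda\int_\gamma f\,\diff s = 2\pi m + \int K$, where $\int K$ is the parallel-transport holonomy (the curvature integral over the region enclosed with multiplicity); since the loops overlap, the covered multiplicity stays $O(1)$, so $\int K$ is negligible compared with $2\pi m$. This gives $L(\gamma)\in\bigl(\tfrac{2\pi m}{\lambda\max f},\tfrac{2\pi m}{\lambda\min f}\bigr)$ up to an $O(\lambda^{-1})$ error.

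The case $m=1$ yields the short alternative. Here I would argue that, for $\lambda$ large, $\gamma$ is (after the rescaling) $C^1$-close to a round Euclidean circle of curvature $f(q)$ at a point $q\in\gamma$; since embeddedness is $C^1$-open and a round circle is embedded, $\gamma$ is simple. The length estimate $L(\gamma)\in\bigl(\tfrac{2\pi-\epsilon}{\lambda\max f},\tfrac{2\pi+\epsilon}{\lambda\min f}\bigr)$ is then the $m=1$ turning relation, the $\pm\epsilon$ absorbing both the $O(\lambda^{-2})$ holonomy of a tiny loop and the variation of $f$ across it, for $\lambda$ beyond a threshold depending only on $\epsilon$.

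The dichotomy itself---the long alternative together with the absence of anything in between---is the heart, and is where Bangert's argument for flows converging to a free $S^1$-action enters. The point is that prime orbits with $m\ge 2$ cannot have a bounded number of loops: the first-return (``guiding-centre'') map along one Larmor loop converges to the identity, and its period-$m$ points track the slow drift of the guiding centre, which to leading order moves along the level sets of $f$. Since these have drift-periods bounded away from $0$ (even near a non-degenerate critical point, where the drift is harmonic-oscillator-like and its period is amplitude-independent), closing up forces $m$ to grow like $\lambda$; equivalently the rescaled period $\lambda L(\gamma)\to\infty$. Feeding $m\gtrsim\lambda$ into the length relation gives $L(\gamma)>\tfrac{1}{\lambda\epsilon}$ once $\lambda>\Lambda(\epsilon,n)$. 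For the self-intersection count I would use that consecutive Larmor loops are nearly coincident circles whose centres are spaced $O(1/\lambda)$ apart, comparable to the Larmor radius $O(1/\lambda)$, so each consecutive pair crosses transversally and the number of self-intersections grows at least linearly in $m$; hence $\ge n$ intersections once $m$, and thus $\lambda$, is large enough. Choosing $\Lambda(\epsilon,n)$ to dominate all three thresholds completes the argument.

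The main obstacle is establishing the \emph{gap} rigorously and uniformly over the compact surface, namely excluding prime periodic orbits with $2\le m\lesssim\lambda$; this is precisely the delicate averaging/normal-form statement that the guiding-centre drift cannot close up in bounded time. Bangert's argument is attractive because it replaces the explicit drift computation by a softer argument valid for any family converging to a free $S^1$-action, so the real work is threefold: (i) verifying that $\Phi^{s/\lambda}_{g,\lambda f}$ meets his hypotheses with constants uniform over $S\Sigma$ \emph{and} over the noncompact parameter $\lambda$; (ii) converting ``many loops'' into a clean lower bound on transverse self-intersections, which requires controlling near-tangential encounters between loops; and (iii) tracking the parallel-transport holonomy on a general $\Sigma$ rather than in the plane, which only perturbs the turning relation by $O(\text{enclosed area})$ but must be estimated uniformly. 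These are the steps I expect to be technically heaviest.
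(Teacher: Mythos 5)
You should first be aware that the paper does not prove this statement at all: it is imported verbatim as \cite[Theorem 7.13]{Benedetti:diss}, and the authors only remark that its proof is based on an argument of Bangert \cite{Bangert:1986} for flows converging to a free $S^1$-action, which is why it extends from $S^2$ to all closed surfaces. Your strategy --- rescaled convergence of $\Phi^{s/\lambda}_{g,\lambda f}$ to fibrewise rotation, the turning-tangent relation $\lambda\int_\gamma f\,\diff s = 2\pi m + O(m/\lambda^2)$, and an appeal to Bangert for the dichotomy --- is therefore exactly the route the paper points to, and your short alternative ($m=1$ orbits are simple, with length pinched near $2\pi/(\lambda f)$) is correct and standard. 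But as a proof your text has a genuine gap at precisely the point you yourself call ``the heart'': the exclusion of prime orbits with $m$ in the intermediate band $2\le m\lesssim 1/\epsilon$ is only asserted. The guiding-centre heuristic you offer in its place is not robust: the slow drift follows level sets of $f$ only where $\diff f\neq 0$, while on regions where $f$ is locally constant (or has degenerate critical set --- nothing in the hypotheses prevents this) the drift is governed by the curvature of $g$ instead, so ``drift periods bounded away from zero'' cannot be extracted from the harmonic-oscillator picture at a nondegenerate critical point. Avoiding any such explicit drift computation is exactly what Bangert's soft argument is for, and invoking it without verifying its hypotheses uniformly in the noncompact parameter $\lambda$ leaves the dichotomy unproved; your claim that closing up forces $m\sim\lambda$ is likewise unestablished (and stronger than needed --- the statement only requires $m\gtrsim 1/\epsilon$).

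There are also two quantitative slips. First, consecutive Larmor loop centres are spaced $O(1/\lambda^{2})$ apart, not ``comparable to the Larmor radius $O(1/\lambda)$'': this is visible in the paper's own computation in the proof of Lemma \ref{lem:long_geodesic}, where the per-loop drift $\Delta x$ is of order $\lambda^{-2}$. Consecutive loops are thus nearly coincident circles, and whether they cross transversally is a competition between the $O(1/\lambda^{2})$ centre drift and the $O(1/\lambda^{2})$ radius change coming from the variation of $1/f$; the near-tangential encounters you flag as a technicality are therefore the generic situation, so the lower bound of $n$ self-intersections does not follow from your sketch. Second, ``the covered multiplicity stays $O(1)$'' is unjustified --- for $m$ nearly coincident loops the multiplicity at a point can be as large as $m$ --- although the conclusion you need survives, since the weighted enclosed area is still $O(m/\lambda^{2})$ and hence the holonomy correction is negligible against $2\pi m$. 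In sum: you have reconstructed the correct architecture, consistent with the citation route the paper takes, but the two substantive claims --- the gap in the length spectrum and the self-intersection count --- remain unproved in your write-up.
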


In order to apply this theorem we need some preliminary results on the properties of magnetic trajectories in $\R^2$ for sufficiently small energies. Such properties 
are certainly well-known to the experts (see the discussion at the end of the section), however for the reader's convenience we include them here.

\begin{lem}\label{lem:magnetic_localization} Let $(g, f)$ be a magnetic pair on $\Sigma$. For any neighborhood $U$ of any compact subset $K \subset \Sigma$ with $f|_U> 0$, any $T>0$ there exists some $\Lambda>0$ such that for every $\lambda > \Lambda$ every $(g,\lambda f)$-geodesic $\gamma$ starting in $K$ stays completely in $U$ in forward 
and backward time up to time $T$, that is, satisfies $\gamma|_{[-T,T]}\subset U$.
\end{lem}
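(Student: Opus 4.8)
The plan is to exploit the elementary fact that in a strong magnetic field a charged particle moves on tiny ``Larmor circles'' whose centres drift slowly, so that over a fixed time interval its displacement is $O(1/\lambda)$. The naive attempt — bounding $\dist(\gamma(t),\gamma(0))$ by the length $\int_0^t|\dot\gamma|\,\mathrm ds = t\le T$ — is useless, since this is of order $T$ and does not decay with $\lambda$. The whole point is that the high geodesic curvature $\lambda f\ge \lambda f_0$ forces the trajectory to wind around in small circles, and one must capture this cancellation rather than estimate arc-length.

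First I would reduce to a local, uniform estimate. Let $\rho_0:=\dist(K,\Sigma\setminus U)>0$ and set $K':=\{x:\dist(x,K)\le \rho_0/2\}$, a compact subset of $U$ on which $f_0:=\min_{K'}f>0$. Since $\Sigma$ is closed and $K$ compact there is a uniform lower bound on the injectivity radius along $K$ together with uniform $C^2$-bounds on $g$, so for each starting point $q_0\in K$ I may work in geodesic normal coordinates centred at $q_0$ on a ball of fixed radius, in which $\dist(\gamma(t),q_0)$ equals the Euclidean norm of the coordinate vector and all relevant quantities (the Christoffel symbols $\Gamma$, the rotation $J$ implementing $v\mapsto v^\perp$ and its derivative, $\nabla f$, and the lower bound $f\ge f_0$) are controlled uniformly in $q_0$ as long as $\gamma$ stays in $K'$. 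There the equation \eqref{prescribedcurvature} for $(g,\lambda f)$ becomes a first-order system for $(\gamma,v)$, $v=\dot\gamma$, with $|v|_g\equiv 1$ and $\dot v=-\Gamma(\gamma)(v,v)+\lambda f(\gamma)\,J_\gamma v$.

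The heart of the argument is the drift estimate for the guiding centre
$$c:=\gamma+\frac{1}{\lambda f(\gamma)}\,J_\gamma v.$$
Differentiating and using $J_\gamma^2=-\mathrm{Id}$ (rotation by $\pi$) to substitute $J_\gamma\dot v=-\lambda f(\gamma)\,v-J_\gamma\Gamma(\gamma)(v,v)+(\dot J_\gamma)v$, the leading term $\tfrac{1}{\lambda f}\cdot(-\lambda f\,v)=-v$ cancels $\dot\gamma=v$ exactly, leaving
$$\dot c=-\frac{\nabla f(\gamma)\cdot v}{\lambda f(\gamma)^2}\,J_\gamma v+\frac{1}{\lambda f(\gamma)}\big((\dot J_\gamma)v-J_\gamma\Gamma(\gamma)(v,v)\big).$$
Every term on the right is $O(1/\lambda)$ with a constant $C_1$ uniform over $q_0\in K$ as long as $\gamma$ remains in $K'$, so $|\dot c|\le C_1/\lambda$; together with the Larmor-radius bound $|\gamma-c|=|J_\gamma v|/(\lambda f)\le C_2/\lambda$ this yields, for $|t|\le T$,
$$\dist(\gamma(t),q_0)=|\gamma(t)|\le |\gamma(t)-c(t)|+|c(t)-c(0)|+|c(0)-q_0|\le \frac{2C_2+C_1T}{\lambda}.$$

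Finally I would close the argument by a standard continuity (bootstrap) step: setting $\Lambda:=(2C_2+C_1T)/(\rho_0/2)$, for every $\lambda>\Lambda$ the a priori bound above shows that, as long as $\gamma$ stays in the ball $\{\dist(\cdot,q_0)<\rho_0/2\}$, it in fact remains within distance $<\rho_0/2$ of $q_0$; hence it can never reach the boundary of that ball, and the set of $t\in[-T,T]$ for which $\gamma(t)$ lies in the ball is nonempty, open and closed, thus all of $[-T,T]$. Since every $\tfrac{\rho_0}{2}$-ball around a point of $K$ lies in $U$, this gives $\gamma|_{[-T,T]}\subset U$ with $\Lambda$ independent of $q_0\in K$. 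The main obstacle is precisely the guiding-centre cancellation: one must verify that the a priori $O(\lambda)$ and $O(1)$ contributions combine to $O(1/\lambda)$, which in the curved setting means keeping track of the error terms produced by the variation of $J$ and by the Christoffel symbols and bounding them uniformly through compactness of $K$.
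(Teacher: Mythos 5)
Your proof is correct, but it takes a genuinely different route from the paper. The paper disposes of this lemma in one line by invoking the adiabatic limit result of Kerman: over the interior of $\mathrm{supp}(f)$ the magnetic flow $C^\infty_{\mathrm{loc}}$-converges, as $\lambda\to\infty$, to rotations in the fibers of $S\Sigma$, which immediately confines trajectories near their starting point uniformly on compact time intervals. You instead give a self-contained quantitative argument: the guiding-centre computation, in which the leading term $\frac{1}{\lambda f}\cdot(-\lambda f\,v)$ cancels $\dot\gamma=v$ exactly and leaves $|\dot c|\le C_1/\lambda$, combined with the Larmor-radius bound $|\gamma-c|\le C_2/\lambda$ and a standard open-closed bootstrap on the interval where $\gamma$ stays in $K'$. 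I checked the cancellation: with $\dot v=-\Gamma(\gamma)(v,v)+\lambda f(\gamma)J_\gamma v$ and $J_\gamma^2=-\mathrm{Id}$ your formula for $\dot c$ is right, all error terms (variation of $J$, Christoffel symbols, $\nabla f/f^2$) are $O(1/\lambda)$ uniformly on $K'$ by compactness, and the bootstrap legitimately restricts all estimates to the region where $f\ge f_0>0$, so the behaviour of $f$ outside $U$ is irrelevant; backward time is symmetric since the drift bound is sign-independent. What your approach buys is an explicit threshold $\Lambda\sim (2C_2+C_1T)/(\rho_0/2)$ with transparent dependence on $T$ and on $\dist(K,\Sigma\setminus U)$, and in fact it directly furnishes the quantitative confinement radius $r_\lambda=c/(e\lambda)$ that the paper later extracts from ``the proof of Lemma \ref{lem:magnetic_localization}'' in the proof of Lemma \ref{lem:long_geodesic}; the paper's citation-based proof is shorter and leans on established theory but leaves that constant implicit. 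One cosmetic point: your normal-coordinate chart of radius $\rho_0/2$ may exceed the injectivity radius, so one should work at scale $r_0:=\min(\rho_0/2,\,\mathrm{inj}(\Sigma,g)/2)$ instead; this changes nothing in the argument.
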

\begin{proof} The claim follows immediately from the fact that over the interior of $\mathrm{supp}(f)$ the magnetic flow $C_{\mathrm{loc}}^{\infty}$-converges to rotations in the fibers;
see \cite{Kerman:1999}.
\end{proof}

\begin{lem}\label{lem:long_geodesic} Let $(g_\fl, f)$ be a magnetic pair on $\R^2$ and let $q\in \R^2$ be a regular point of $f$. Then for sufficiently large $\lambda$ there is a $(g_\fl,\lambda f)$-geodesic starting at $q$ which has self-intersections.
\end{lem}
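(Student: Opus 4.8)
The plan is to study, for large $\lambda$, a $(g_\fl,\lambda f)$-geodesic $\gamma$ issuing from $q$ through the guiding-center (averaging) picture of a charged particle in a strong magnetic field, and to extract a self-intersection from the resulting $\na f$-drift. Since $q$ is a regular point, $\na f(q)\neq 0$; we additionally assume $f(q)>0$ (the case $f(q)<0$ is identical after a time reversal, and $f(q)=0$ will not be needed). Fix a small ball $U\ni q$ on which $f>0$ and $\na f\neq 0$. On the flat plane, equation \eqref{prescribedcurvature} for $\lambda f$ reads $\ddot\gamma=\lambda f(\gamma)\,\dot\gamma^\perp$ with $|\dot\gamma|\equiv 1$, so the velocity turns at instantaneous rate $\lambda f(\gamma)$ and $\gamma$ has curvature radius $\tfrac1{\lambda f(\gamma)}=O(1/\lambda)$. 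By Lemma \ref{lem:magnetic_localization}, for every fixed $T>0$ and all large $\lambda$ the arc $\gamma|_{[0,T]}$ stays inside $U$; we will work on a time interval of length $O(1)$, over which $\gamma$ performs $\asymp\lambda$ nearly circular loops.

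The key computation is the motion of the guiding center
\[
c(t):=\gamma(t)+\frac{1}{\lambda f(\gamma(t))}\,\dot\gamma^\perp(t).
\]
Differentiating and using $\ddot\gamma=\lambda f\,\dot\gamma^\perp$ together with $(\dot\gamma^\perp)^\perp=-\dot\gamma$, the $\dot\gamma$-terms cancel and one is left with the pure drift
\[
\dot c=-\frac{\langle \na f(\gamma),\dot\gamma\rangle}{\lambda f(\gamma)^2}\,\dot\gamma^\perp,
\qquad |\dot c|\le \frac{\|\na f\|_{C^0(U)}}{\lambda\,(\inf_U f)^2}=O(1/\lambda).
\]
Averaging over one loop (as $\dot\gamma$ sweeps a full turn, $\langle\na f,\dot\gamma\rangle\,\dot\gamma^\perp$ has mean $\tfrac12(\na f)^\perp$) gives a net displacement per loop
\[
\Delta c\approx -\frac{2\pi}{\lambda f(q)}\cdot\frac{(\na f(q))^\perp}{2\lambda f(q)^2}=-\frac{\pi\,(\na f(q))^\perp}{\lambda^2 f(q)^3},
\]
which is nonzero precisely because $q$ is regular, has magnitude $O(1/\lambda^2)$, and points in an essentially fixed direction tangent to $\{f=f(q)\}$. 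In particular the drift per loop $O(1/\lambda^2)$ is far smaller than the curvature radius $O(1/\lambda)$, so consecutive loops overlap heavily.

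This heavy overlap is what should force a crossing, but it is also the one genuine difficulty: two circles of radius $R$ whose centers lie at distance $d\ll R$ meet at an angle $\asymp d/R$, so \emph{consecutive} loops cross only quasi-tangentially, at an angle $O(1/\lambda)$ that is comparable to the $O(1/\lambda)$ error in approximating $\gamma$ by circular arcs; a naive perturbation argument is therefore inconclusive. I would circumvent this by comparing two loops that are far apart rather than consecutive. Passing to the rescaled curve $y(\sigma):=\lambda\bigl(\gamma(\sigma/\lambda)-q\bigr)$ yields $y''=f(q+y/\lambda)(y')^\perp$, $|y'|=1$, which converges in $C^\infty_{\mathrm{loc}}$ to the exact circle of radius $1/f(q)$ as $\lambda\to\infty$; hence each single loop is $C^1$-$O(1/\lambda)$-close to a genuine circle of radius $R\approx 1/f(q)$ centered at the rescaled guiding center, whose per-loop displacement is now $O(1/\lambda)$ in the fixed direction $-(\na f(q))^\perp$.

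Since these displacements accumulate essentially monotonically in steps $O(1/\lambda)\ll R$, there is an index $k=\Theta(\lambda)$ for which the centers of loop $1$ and loop $k$ lie at distance in, say, $[R/2,3R/2]$; the two corresponding circles (of nearly equal radii, as their base points differ by $O(1/\lambda)$) then cross \emph{transversally} at an angle bounded below independently of $\lambda$. As both loops are $C^1$-$O(1/\lambda)$-close to these transversally intersecting circles, for $\lambda$ large the loops themselves must cross, producing distinct parameters $s_1\neq s_2$ with $\gamma(s_1)=\gamma(s_2)$. This is the desired self-intersection, and the whole construction takes place inside the arbitrarily small neighborhood $U$ of $q$. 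The main obstacle is exactly the near-tangency of neighboring loops noted above; replacing neighboring loops by loops $\Theta(\lambda)$ apart restores a definite crossing angle and makes the comparison robust, while everything else is the standard strong-field averaging made quantitative (consistent with the fact that for constant $f$, where $\na f\equiv 0$, the trajectory is an embedded circle with no self-intersection).
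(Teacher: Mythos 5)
Your argument is correct in substance, but it takes a genuinely different route from the paper's. The paper works with a \emph{single} loop: it places $q$ on the $x$-axis with $\na f(q)$ pointing in the positive $y$-direction, launches the trajectory in the negative $y$-direction, and sandwiches the two arcs between the first and second returns to the axis between explicit circular arcs of radii $r_0,r_1,r_2$ computed from the linearization of $f$ at $q$ (with error $\e\|v\|$); an explicit computation then shows the drift $\Delta x$ after one loop satisfies $\Delta x\geq 2\Delta(\e,\lambda)$ with $\liminf_{\lambda\to\infty}\lambda^2\Delta(0,\lambda)>0$, which forces the second loop to cross the first. In other words, the quasi-tangential crossing of \emph{consecutive} loops that you declare inconclusive for a ``naive perturbation argument'' is exactly what the paper resolves: both the crossing angle and the circle-approximation error are $O(1/\lambda)$, but the curvature sandwich makes the error of size $\e\cdot O(1/\lambda)$ with $\e$ arbitrarily small, while the drift term carries the fixed constant $L=\|D_qf\|$, so the drift wins. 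Your workaround --- the exact guiding-center identity $\dot c=-\langle\na f(\gamma),\dot\gamma\rangle\,\dot\gamma^\perp/(\lambda f(\gamma)^2)$ for $c=\gamma+\frac{1}{\lambda f(\gamma)}\dot\gamma^\perp$, the averaged per-loop displacement $-\pi(\na f(q))^\perp/(\lambda^2 f(q)^3)$, and the comparison of loop $1$ with loop $k=\Theta(\lambda)$ so that the comparison circles cross at an angle bounded below independently of $\lambda$ --- is sound, but it shifts the burden to error accumulation over $\Theta(\lambda)$ loops, which you assert (``accumulate essentially monotonically'') rather than prove. The assertion does hold: the guiding center stays within $O(1/\lambda)$ of $q$, so per-loop errors are $O(1/\lambda^3)$ against a main term of order $1/\lambda^2$, and summing over $\Theta(\lambda)$ loops gives total error $O(1/\lambda^2)$ against total drift $\Theta(1/\lambda)$; a complete write-up should include this Gronwall-type bookkeeping, together with the (standard) stability of the transversal circle--circle intersection under $C^1$-perturbations of size $O(1/\lambda)$. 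As for what each approach buys: the paper's is elementary planar geometry yielding a self-intersection within essentially one loop; yours is conceptually cleaner, matches the slow-drift description of low-energy magnetic dynamics in \cite{Raymond:2015} cited after Proposition \ref{prop:fconstant}, and is more robust (e.g., toward non-flat metrics), at the cost of a time horizon of order $\lambda$ loops. Finally, your standing assumption $f(q)\neq 0$ is no loss relative to the paper: its radii involve $1/(\lambda e\pm\cdots)$ with $e=f(q)$, so the same restriction is implicit there, and in the only application the lemma is invoked at points of a positive regular level $f^{-1}(e_2)$.
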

\begin{proof} Suppose that the level set $\{f=e\}\ni q$ is tangential to the $x$-axis at $q$ and that the gradient of $f$ at $q$ points in the positive $y$-direction. We look at $(g_\fl,\lambda f)$-geodesics starting at $q$ in the negative $y$-direction. For sufficiently large $\lambda$ the trajectory $q(t)$ schematically looks as the solid curve in Figure \ref{fig:magnetic_path}. For given $\varepsilon>0$ we choose $\delta>0$ such that 
\[
			\|f(q+v)-f(q)-D_qf \cdot v\|\leq \varepsilon \|v\|
\]
for all $v\in \R^2$ with $\|v\|\leq \delta$. Let $t_1$ and $t_2$ be the first and second time at which $q(t)$ hits the $x$-axis. By the proof of Lemma \ref{lem:magnetic_localization} there exists some $c>1$ such that for all sufficiently large $\lambda$ the segment $q|_{[0,t_2]}$ is contained in a ball with radius $r_{\lambda}=c/(e\lambda)$ around $q$. We can suppose that this ball is contained in $B_q(\delta)$ by choosing $\lambda$ large enough. In order to prove that $q(t)$ is not embedded, it suffices to show that the ``drift'' $\Delta x$ depicted in Figure \ref{fig:magnetic_path} is positive for $\lambda$ large enough. 

\begin{figure}[h]
\begin{small}
	\centering
		\def\svgwidth{0.6\textwidth}
		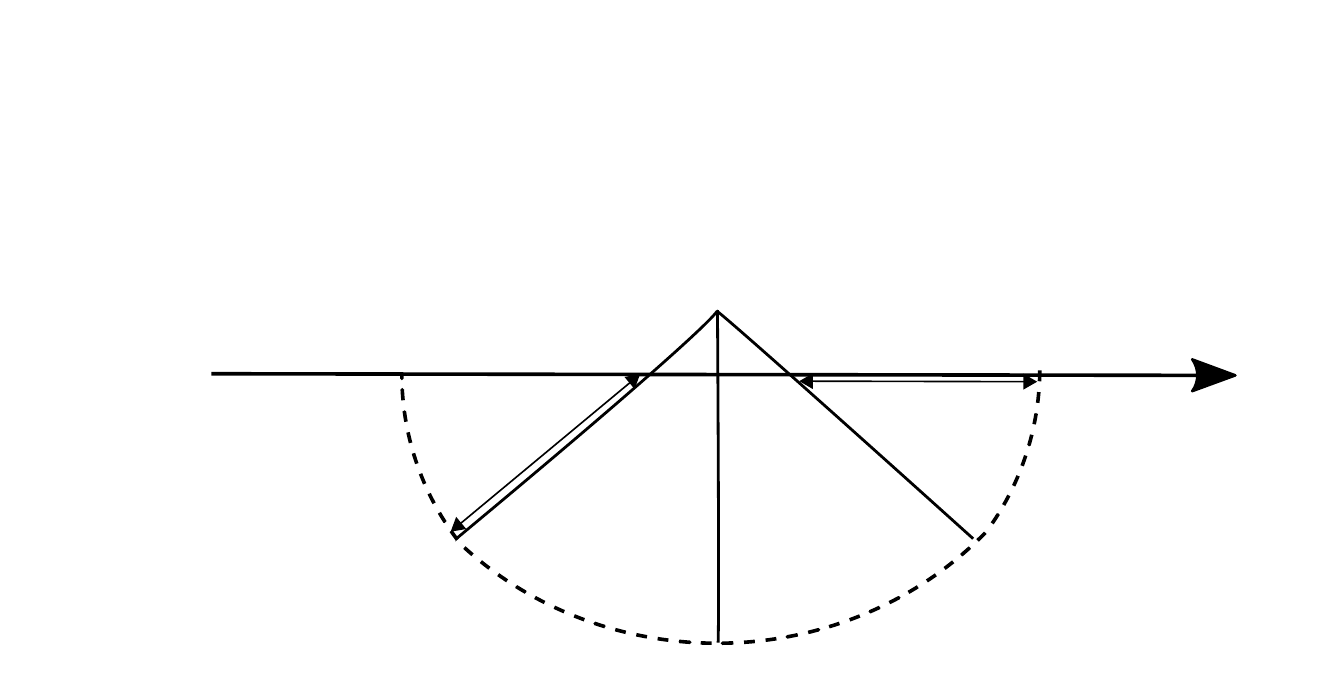
	\caption{Magnetic trajectory in a magnetic field on $\R^2$.}
	\label{fig:magnetic_path}
	\end{small}
\end{figure}

A curvature comparison shows that the segments $q|_{[0,t_1]}$ and $q|_{[t_1,t_2]}$ of $q(t)$ lie completely below the dashed lines in Figure \ref{fig:magnetic_path} which are specified as follows: the dashed line above the $x$-axis is a circular arc with radius 
$$r_2:=\frac{1}{\lambda e-\varepsilon r_{\lambda}}$$ 
starting at $q(t_1)$ tangentially to $q(t)$. The dashed line below the $x$-axis is tangential to $q(t)$ at $q(0)$ and consists of three circular arcs of radii 
$$r_1:=\frac{1}{\lambda e+\varepsilon r_{\lambda}}, \quad r_0:=\frac{1}{\lambda e-\frac{L\lambda}{2(\lambda e+\varepsilon r_{\lambda})}+\varepsilon r_{\lambda}},\quad \text{and} \ \ r_1$$
respectively, which meet tangentially on the horizontal line specified by $y=2(\lambda e+\varepsilon r_{\lambda})$. Here we have set $L=\|D_qf\|$. We compute

    \begin{equation}
    \begin{split}
   \Delta x &\geq 2r_1 +2\cos(30^{\circ})(r_0-r_1)-2r_2  \\
	 &\geq \frac{2 \cos(30^{\circ})}{\lambda e-\frac{L\lambda}{2(\lambda e+\varepsilon c/(e\lambda))}+\frac{c\varepsilon}{e\lambda}}+ \frac{2(1-\cos(30^{\circ}))}{\lambda e+\frac{c\varepsilon}{e\lambda}}-\frac{2}{\lambda e-\frac{c\varepsilon}{e\lambda}} \\
	 &:= 2\Delta(\varepsilon,\lambda)
    \end{split}
    \end{equation}
for some constant $c>0$. We need to show that there exists some $\varepsilon>0$ and a sufficiently large $\lambda$ such that $\Delta(\varepsilon,\lambda)>0$. Since $|\partial_{\varepsilon} \lambda^2\Delta(\varepsilon,\lambda)|$ is bounded from above on $(\varepsilon,\lambda)\in (0,1]\times [1, \infty)$, it suffices to show that $\liminf_{\lambda\rightarrow \infty} \lambda^2\Delta(0,\lambda)>0$. This is indeed the case, for
  \begin{equation*}
   \lambda^2\Delta(0,\lambda) = \frac{\cos(30^{\circ})}{\lambda e-\frac{L\lambda}{2\lambda e}}+ \frac{1-\cos(30^{\circ})}{\lambda e}-\frac{1}{\lambda e} = \frac{L\lambda \cos(30^{\circ})}{2e^2(\lambda e-\frac{L}{2e})}. \qedhere
  \end{equation*}
\end{proof}


Before proving Proposition \ref{prop:fconstant} we shall recall that on $\mathbb{T}^2$ the action functional $\A^\lambda$
is well-defined over the space of contractible loops even if $f\mu_g$ is not exact. Indeed, one can replace the integral of $\theta$ along a contractible loop $\gamma$ 
by the integral of $f\mu_g$ over a capping disk for $\gamma$, and the resulting functional will not depend on the choice of 
the capping disk since $\T^2$ is aspherical. Moreover, for any Zoll magnetic system $(g,f)$ the value of the functional $\A^\lambda$ 
is constant on the set of prime closed $(g,f)$-geodesics; this latter fact follows e.g. from the magnetic systolic-diastolic inequality in \cite{Benedetti:2018c}.

\begin{proof}[Proof of Proposition \ref{prop:fconstant}]

We claim that $f\geq 0$. Indeed, first by \cite[Proposition 1.3]{Benedetti:2018c} we can assume that 
the integral of $f$ over $\T^2$ is positive. Now, if $f\geq 0$ does not hold, then $f \mu_{g_\fl}$ is oscillating in the sense of \cite{Asselle:2015hc}. In particular, by \cite[Theorem 4.1]{Asselle:2015hc} for sufficiently large $\lambda$ there exists a periodic $(g_\fl,\lambda f)$-geodesic $\gamma$ which is a local minimizer of the action functional $\A^{\lambda}$. Another periodic magnetic geodesic $\nu$ starting at $\gamma(0)$ with initial velocity close to $\dot{\gamma}(0)$ has to intersect $\gamma$ a second time. Now, since in the Zoll case the functional $\A^{\lambda}$ is constant on the set of all prime magnetic geodesics, the same line of arguments as in the end of the proof of Proposition \ref{prop:persistencetorus} applied to $\gamma$ and $\nu$ and using the functional $\A^{\lambda}$ instead of $\A$ yields a contradiction.

We first consider the case $f>0$. We choose $\epsilon>0$ so small that 
\begin{equation}
\left [\frac{2\pi -\epsilon}{ \max f}, \frac{2\pi + \epsilon}{\min f}\right ]\cap \left [\frac1\epsilon,+\infty\right ) =\emptyset,
\label{epsilondisjoint}
\end{equation}
and set $\Lambda := \Lambda(\epsilon,1)>0$ as in \cite[Theorem 7.13]{Benedetti:diss}.
Following \cite{Ginzburg:1987lq}, up to enlarging $\Lambda$ if necessary we find for all $\lambda>\Lambda$
 one\footnote{Actually, at least two.} embedded closed $(g_\fl,\lambda f)$-geodesic $\gamma_\sss^\lambda$ which is short, meaning that its length satisfies 
$$\frac{2\pi -\epsilon}{\lambda \max f}\ < \ \ell (\gamma_\sss^\lambda) \ < \  \frac{2\pi + \epsilon}{\lambda \min f},\quad \forall \lambda >\Lambda.$$ 

On the other hand, if $f$ is not constant, then by Lemma \ref{lem:long_geodesic} up to enlarging $\Lambda$ further 
we also find for all $\lambda>\Lambda$ a $(g_\fl,\lambda f)$-geodesic $\gamma_\llll^\lambda$ which has self-intersections. We set $\lambda_+:=\Lambda$ and 
take $n\in \N$ such that $\lambda_n>\lambda_+$. Then, the $(g_\fl,\lambda_n f)$-geodesic $\gamma_\llll^{\lambda_n}$ 
with self-intersections is necessarily closed and hence by \cite[Theorem 7.13]{Benedetti:diss} long, meaning that   
 $$\ell (\gamma_\llll^{\lambda_n} ) > \frac{1}{\lambda_n\epsilon}.$$

We denote by $(q_\sss,v_\sss)$, $(q_\llll,v_\llll)$ the initial conditions of the closed $(g_\fl,\lambda_n f)$-geodesics 
$\gamma_\sss^{\lambda_n},\gamma_\llll^{\lambda_n}$ respectively, and consider a path $r\mapsto (q(r),v(r))$ 
in $S\T^2$  connecting $(q_\sss,v_\sss)$ to $(q_\llll,v_\llll)$. Since the period (and hence the length) of closed $(g,\lambda_n f)$-geodesics 
varies smoothly, by the intermediate value theorem we find $r_0\in (0,1)$ such that the length of the closed $(g,\lambda_n f)$-geodesic 
$\gamma_\iii^{\lambda_n}$ with initial conditions $(q(r_0),v(r_0))$ satisfies 
$$\frac{2\pi +\epsilon}{\lambda_n \min f}\ < \ \ell(\gamma_\iii^{\lambda_n}) \ <  \ \frac{1}{\lambda_n \epsilon},$$
thus contradicting \cite[Theorem 7.13]{Benedetti:diss}.

The case $f\geq 0$ is more delicate since \cite[Theorem 7.13]{Benedetti:diss} cannot be applied directly. 
Our strategy is to show that the problem can be reduced
to the case $f>0$ by suitably modifying the magnetic function. Suppose that $f\geq 0$ is not constant and fix three regular energy values $0<e_0<e_1<e_2<\max f$. Without loss of generality 
we can suppose that the superlevel sets $\{f\geq e_i\}$ are connected, $i=0,1,2$. Using a cut-off function we construct $\tilde f$ such that 
$$\tilde f \equiv f\quad \text{on}\ \{f\geq e_1\}, \quad \text{and} \ \tilde f\equiv e_0\quad \text{on}\ \{f\leq e_0\}.$$
We fix $\epsilon >0$ such that \eqref{epsilondisjoint} holds with $f$ replaced by $\tilde f$ and choose 
$$\bar \ell\in \Big (\frac{2\pi+\epsilon}{e_0}, \frac{1}{\epsilon}\Big).$$ 
We also let $\Lambda >0$ be such that, for all $\lambda>\Lambda$, closed $(g_\fl,\lambda \tilde f)$-geodesics are either embedded 
and short (in the sense above) or long and have self-intersections.  Up to enlarging $\Lambda$ we find for all $\lambda>\Lambda$ at least two short embedded closed $(g_\fl,\lambda \tilde f)$-geodesics, one of which is also a short $(g_\fl, \lambda f)$-geodesic (namely, the one that is close to the $S^1$-fiber over a maximum point of $\tilde f$). We denote such 
a short periodic orbit with $\gamma_\sss^\lambda$ and recall that
$$\ell(\gamma_\sss^\lambda) \in \Big (\frac{2\pi -\epsilon}{\lambda \max f}, \frac{2\pi + \epsilon}{\lambda e_0}\Big ),\quad \forall \lambda >\Lambda.$$

At the same time, by Lemmas \ref{lem:magnetic_localization} and \ref{lem:long_geodesic} applied to a neighborhood $U\subset \{f> e_1\}$ of the 
compact set $K=\{f\geq e_2\}$, up to enlarging $\Lambda$ further we can assume that for all $\lambda>\Lambda$ there is a 
$(g_\fl,\lambda f)$-geodesic $\gamma_\llll^\lambda$ starting at $f^{-1}(e_2)$ which does not close up until time $\bar \ell/\lambda$, and that all $(g_\fl,\lambda f)$-geodesics starting in $K$ stay in $U$ up to time $\bar \ell/\Lambda$. 
We set $\lambda_+:=\Lambda$ and take $n\in \N$ such that $\lambda_n>\lambda_+$. In particular, the $(g_\fl,\lambda_n f)$-geodesic 
$\gamma_\llll^{\lambda_n}$ is closed and has length larger than $\bar \ell/\lambda_n$.

Let now $(q_\sss,v_\sss)$, $(q_\llll,v_\llll)$ be the initial conditions of $\gamma_\sss^{\lambda_n}$ and $\gamma_\llll^{\lambda_n}$, respectively, and let
$r\mapsto (q(r),v(r))$ be a path in $S\Sigma$ from $(q_\sss,v_\sss)$ to $(q_\llll,v_\llll)$
such that $q(\cdot)$ is entirely contained in $\{f\geq e_2\}$. Since the period of closed 
$(g,\lambda_n f)$-geodesics varies smoothly, by the intermediate value theorem we find $r_0\in (0,1)$ such that the 
corresponding closed $(g,\lambda_n f)$-geodesic $\gamma_\iii^{\lambda_n}$ has length $\bar \ell/\lambda_n$, and in particular 
is completely contained in $\{f\geq e_1\}$. This shows that $\gamma_\iii^{\lambda_n}$ is also a closed $(g,\lambda_n \tilde f)$-geodesic, thus contradicting \cite[Theorem 7.13]{Benedetti:diss}.
\end{proof}

Lemma \ref{lem:long_geodesic} can also be deduced from Corollary 1.4 in \cite{Raymond:2015}, in which 
 low energy magnetic dynamics in $\R^2$ under a non-constant magnetic field is described in terms of a fast rotating motion and a slow drift along the level sets
of the magnetic function. Let us mention that Proposition \ref{prop:fconstant} extends with the same proof to magnetic systems $(g,f)$ where $g$ is any metric of constant curvature. For an arbitrary  system $(g,f)$ an analogue of Proposition \ref{prop:fconstant}, and therewith a step towards Conjecture (Z), would require to unscramble the influences of an inhomogeneous magnetic field and an inhomogeneous metric on the drift motion. We intend to investigate this in future work.


We now prove Theorem \ref{thm:main} for higher genus surface. Actually, we will prove the following stronger 

\begin{thm}
Let $(g,f)$ be a magnetic system on a surface with genus at least two such that $(g,\lambda_n)$ is Zoll for some sequence $\lambda_n \downarrow h(g,f)$. 
Then $g$ has constant curvature and $f$ is a constant function.
\label{thm:highergenus}
\end{thm}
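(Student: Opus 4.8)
The plan is to exploit the relation between the helicity of the magnetic flow and the Ma\~n\'e critical value established in \cite{Paternain:2009}, together with an explicit computation of the helicity for Zoll systems. Throughout, $\Sigma$ has genus at least two, so $\chi(\Sigma)<0$ and, by the Gysin sequence for the circle bundle $\pi\colon S\Sigma\to\Sigma$, the pulled-back two-form $\pi^*(f\mu_g)$ is exact on $S\Sigma$ even though $f\mu_g$ itself need not be. Consequently the magnetic vector field $X_\lambda$ generating $\Phi^t_{g,\lambda f}$ is null-homologous with respect to the Liouville volume, and its helicity $\mathcal H(\lambda)$ is a well-defined number, independent of the chosen primitive.

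First I would compute $\mathcal H(\lambda)$ explicitly. Writing $\{\alpha_1,\alpha_2,\alpha_3\}$ for the canonical coframe on $S\Sigma$ dual to the geodesic, horizontal and vertical fields, one has the structure equations $d\alpha_1=-\alpha_2\wedge\alpha_3$, $d\alpha_2=\alpha_1\wedge\alpha_3$, $d\alpha_3=-(K\circ\pi)\,\alpha_1\wedge\alpha_2$ and $\alpha_1\wedge\alpha_2=\pi^*\mu_g$. Since $X_\lambda=X+\lambda(f\circ\pi)V$ one finds $\iota_{X_\lambda}\Theta=-d\alpha_1+\lambda\,\pi^*(f\mu_g)$, where $\Theta=\alpha_1\wedge\alpha_2\wedge\alpha_3$; using the Gauss--Bonnet primitive $-\tfrac{\int_\Sigma f\mu_g}{2\pi\chi(\Sigma)}\alpha_3$ of $\pi^*(f\mu_g)$ (up to a pullback that does not affect the integral) and integrating, the cross terms drop out and I obtain
\[
\mathcal H(\lambda)=-2\pi\,\mathrm{Area}(\Sigma)+\lambda^2\,\frac{\big(\int_\Sigma f\mu_g\big)^2}{|\chi(\Sigma)|}.
\]
In particular $\mathcal H(\lambda)$ is strictly increasing in $\lambda>0$ and vanishes exactly at $\lambda_0:=\sqrt{2\pi\,\mathrm{Area}(\Sigma)\,|\chi(\Sigma)|}\big/\big|\int_\Sigma f\mu_g\big|$, equivalently at the energy $\tilde c:=(2\lambda_0^2)^{-1}=\big(\int_\Sigma f\mu_g\big)^2\big/\big(4\pi\,\mathrm{Area}(\Sigma)\,|\chi(\Sigma)|\big)$. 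A direct check using Gauss--Bonnet shows that $\tilde c=c(g,f)$ for the trivial examples $(g_\con,f_\con)$.

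The two halves of the argument then pin $\lambda_0$ from opposite sides. On the dynamical side, whenever $(g,\lambda f)$ is Zoll the flow realizes the standard regular $S^1$-fibration of $S\Sigma$ (Remark \ref{rmk:allclosed}); since its orbits are then positively isotopic to the fibres, the sign of the average self-linking is prescribed by the Euler number, which forces $\mathcal H(\lambda)\geq 0$, i.e.\ $\lambda\geq\lambda_0$. Applying this to the Zoll parameters $\lambda_n\downarrow h(g,f)$ and letting $n\to\infty$ (the helicity being polynomial, hence continuous, in $\lambda$) gives $h(g,f)\geq\lambda_0$, that is $c(g,f)\leq\tilde c$. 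On the other side, the relation of \cite{Paternain:2009} between helicity and the Ma\~n\'e critical value yields the reverse bound $c(g,f)\geq\tilde c$, with equality if and only if $g$ has constant curvature and $f$ is constant. Squeezing the two inequalities forces $c(g,f)=\tilde c$, and the equality case completes the proof.

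The main obstacle I anticipate is the dynamical sign statement $\mathcal H(\lambda)\geq 0$ for Zoll systems: one must translate the purely topological fact that the closed orbits form the standard (rather than an exotic) Seifert fibration into the correct sign of the analytically computed helicity, matching orientation and period conventions, and check that this passes to the limit $\lambda_n\to h(g,f)$. A secondary difficulty is to extract from the equality case of the helicity--Ma\~n\'e relation not only constancy of the curvature but also constancy of $f$; here one uses that $\tilde c$ depends on $f$ only through $\int_\Sigma f\mu_g$, so that equality is an isoperimetric-type rigidity saturated exactly by the homogeneous model.
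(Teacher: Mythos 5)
Your proposal follows essentially the same route as the paper's proof: express the helicity of $(g,\lambda f)$ as an explicit quadratic function of $\lambda$ vanishing exactly at $\lambda_{g,f}^2=-2\pi\chi(\Sigma)A/[f]^2$ (your structure-equation computation reproduces, up to an overall sign convention, the average-magnetic-curvature formula the paper quotes from Lemma 2.12 of \cite{Benedetti:2018c}), use the Zoll hypothesis to pin down the sign of the helicity at each $\lambda_n$, pass to the limit $\lambda_n\downarrow h(g,f)$ by monotonicity/continuity, and conclude from the inequality $h(g,f)\leq\lambda_{g,f}$ of \cite{Paternain:2009} together with its equality case, which already yields constancy of both the curvature and $f$. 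The single step you flag as the main obstacle --- that Zollness forces the relevant sign of the helicity --- is precisely where the paper instead cites Corollary 2.13 of \cite{Benedetti:2018c} (a Zoll system has positive average magnetic curvature), so your self-linking/Euler-number sketch, while plausible via Arnold's asymptotic-linking interpretation and consistent with the constant-curvature model, would simply be replaced by that citation.
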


\begin{proof}
Recall that the \textit{helicity} $\mathcal H(g,\lambda f)$ of $(g,\lambda f)$ vanishes if and only if $\lambda =\lambda_{g,f}$, where
$$\lambda_{g,f}^2 = \frac{-2\pi \chi (\Sigma) A}{[f]^2}.$$
Here $A$ is the total area of $(\Sigma,g)$, and $[f]:=\int_\Sigma f\mu_g$ is the total integral of $f\mu_g$.
For our purposes we don't need to recall the definition of helicity; all we need to know is
that, by Lemma 2.12 in \cite{Benedetti:2018c} the helicity $\mathcal H(g,\lambda f)$ is related to the average magnetic curvature 
$K_{\lambda f}:= \lambda^2[f]^2 +\ 2\pi \chi(\Sigma)A^{-1}$
by the formula
$$\mathcal H(g,\lambda f) = \frac{A^2}{2\chi(\Sigma)} \, K_{\lambda f},$$
hence in particular the function $\lambda\mapsto \mathcal H(g,\lambda f)$ is strictly monotonically decreasing. Moreover, 
$\lambda_{g,f}$ and $h(g,f)$ are related by the inequality $h(g,f)\leq \lambda_{g,f}$, with equality
if and only if $g$ has constant curvature and $f$ is a constant function (for the details see \cite{Paternain:2009} and references therein). 
Therefore, what we have to show is that our assumption implies that $\lambda_{g,f}=h(g,f)$. 

Since $(g,\lambda_n f)$ is Zoll, Corollary 2.13 in \cite{Benedetti:2018c} implies that $K_{\lambda_n f}>0$. Thus,
$$\mathcal H(g,\lambda_n f ) =  \frac{A^2}{2\chi(\Sigma)} \, K_{\lambda_n f} <0,\quad \forall n\in \N.$$
By the monotonicity of $\lambda \mapsto \mathcal H(g,\lambda f)$ we deduce that $\mathcal H(g,\lambda f)<0$ for every $\lambda > h(g,f)$, and this implies that $h(g,f)\geq \lambda_{g,f}$,
thus completing the proof.
\end{proof}



\section{On Conjecture (Z) for $\Sigma =S^2$}
\label{section:4}

The goal of this section is to prove some partial results about Conjecture (Z) on $\Sigma =S^2$. In particular,
we prove that if $(g,f)$ on $S^2$ is such that $(g,\lambda f)$ is Zoll for $\lambda$ sufficiently small then 
$g$ must ``generically'' be Zoll, and that genericity can be dropped when restricting to rotationally invariant magnetic systems.

\begin{prop}
Let $(g,f)$ be a magnetic system on $S^2$ such that $(g,\lambda_n f)$ is Zoll for some sequence
$\lambda_n \downarrow 0$. Then $g$ must ``generically'' 
be Zoll. 
\label{prop:genericallyzoll}
\end{prop}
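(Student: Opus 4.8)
The plan is to exploit the fact that, as $\lambda\downarrow 0$, the (arc-length) magnetic flow $\Phi^t_{g,\lambda f}$ on $SS^2$ converges to the geodesic flow $\Phi^t_{g,0}$: its generating vector field is the geodesic spray plus a magnetic correction proportional to $\lambda$, so that $(\lambda,t,x)\mapsto \Phi^t_{g,\lambda f}(x)$ is jointly continuous and $\Phi^t_{g,\lambda_n f}\to \Phi^t_{g,0}$ in $C^\infty$ uniformly on compact time-intervals. The idea is then to transport the ``all orbits closed'' property of the Zoll systems $(g,\lambda_n f)$ to the geodesic flow in the limit, for a generic set of initial conditions.

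First I would record two facts about each Zoll system $(g,\lambda_n f)$. Since $SS^2\cong \R P^3$ is a closed $3$-manifold and $\Phi^t_{g,\lambda_n f}$ is a periodic flow, Epstein's theorem \cite{Epstein:1972} applies: the flow is a Seifert fibration with a least-period function $T_n(\cdot)$ that is bounded for fixed $n$ and equals a common value $\tau_n$ on the open, dense, full-measure set $G_n$ of principal orbits, the exceptional orbits forming a finite union of circles. Moreover, by the magnetic systolic--diastolic inequality \cite{Benedetti:2018c} the action $\A^{\lambda_n}$ (defined on contractible loops via capping disks, modulo the total flux) is constant on prime closed $(g,\lambda_n f)$-geodesics; since $\A^{\lambda_n}$ equals length minus $\lambda_n$ times a flux bounded by $\int_{S^2}|f|\mu_g$, all prime closed magnetic geodesics have length within $O(\lambda_n)$ of a single value $c_n$, and $\tau_n=c_n+O(\lambda_n)$.

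The main step is to bound $\tau_n$ from above and below, uniformly in $n$. For the lower bound I would note that a closed $(g,\lambda_n f)$-geodesic has geodesic curvature $\le \lambda_n\|f\|_\infty\to 0$, so a total-curvature estimate forces its length to stay bounded away from $0$, giving $\tau_n\ge \delta>0$. For the upper bound I would produce, for each $n$, a single closed $(g,\lambda_n f)$-geodesic of uniformly bounded length by a min-max argument: the min-max critical value of $\A^{\lambda_n}$ over a Lusternik--Schnirelmann sweep-out of $S^2$ depends continuously on $\lambda_n$ and converges to the finite geodesic min-max value as $\lambda_n\downarrow 0$. By the constant-action property this bounded length equals $c_n+O(\lambda_n)$, whence $\tau_n\le M<\infty$ for all large $n$. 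I expect this uniform boundedness of the principal periods to be the crux, since Epstein's bound is a priori only per-$n$; it is exactly here that the Zoll hypothesis, through the constant-action property, enters essentially.

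Finally I would run the limiting argument on the residual set $G:=SS^2\setminus\bigcup_n(SS^2\setminus G_n)$, the complement of the countably many nowhere-dense exceptional loci, which is generic in the sense of Baire. For $x\in G$ one has $\Phi^{\tau_n}_{g,\lambda_n f}(x)=x$ with $\tau_n\in[\delta,M]$; passing to a subsequence with $\tau_n\to T\in[\delta,M]$ and using joint continuity of the flow yields $\Phi^{T}_{g,0}(x)=x$ with $T>0$. Thus the geodesic through every $x\in G$ is closed, i.e. $g$ is Zoll on a residual set of initial conditions, which is the sense in which $g$ is ``generically'' Zoll. The remaining, non-generic initial conditions are precisely the exceptional orbits of the Seifert fibrations, where the least period may degenerate in the limit and where the argument does not apply without a further non-degeneracy assumption on the limiting closed geodesics.
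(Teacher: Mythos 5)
There is a genuine gap at precisely the step you flag as the crux. You claim that since $\A^{\lambda_n}$ is constant on prime closed orbits and ``equals length minus $\lambda_n$ times a flux bounded by $\int_{S^2}|f|\mu_g$'', all prime closed $(g,\lambda_n f)$-geodesics have length within $O(\lambda_n)$ of a common value. This is false as stated: on $S^2$ the flux term is the integral of $f\mu_g$ over an admissible \emph{capping disk}, and such disks cover $S^2$ with a multiplicity that grows with the complexity (self-intersections/winding) of the orbit; it is not bounded by the total integral of $|f|$. This is exactly the content of the paper's Lemma \ref{lem:boundonlength}, whose bound on the capping term is $(2m+1)\lambda_n\|f\|_\infty\mathrm{Area}(S^2,g)$, \emph{depending on the number $m$ of self-intersections} -- and long orbits with bounded geodesic curvature have a priori unboundedly many self-intersections. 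Consequently constant action does not transfer a length bound from your min-max orbit to all orbits: an orbit can have large length compensated by a large (order $\ell/\lambda_n$) capping flux, so the uniform bound $\tau_n\le M$ is not established, and the limiting argument $\Phi^{T}_{g,0}(x)=x$ collapses. (Two further soft points: $\A^{\lambda}$ on $S^2$ is multivalued -- capping disks differ by spheres -- so the min-max needs care; and the least period of the actual magnetic flow is \emph{not} constant on principal orbits, only the reparametrized $S^1$-action has constant period -- the paper explicitly uses that periods vary smoothly and non-trivially. The latter is fixable pointwise, the former is not the main issue.)

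There is also a conceptual mismatch that signals the gap. The paper's ``generically'' is genericity in the space of metrics: the proposition is reformulated as ``if $g$ has a non-degenerate closed geodesic, then $(g,\lambda f)$ is not Zoll for small $\lambda$'', and the non-degeneracy hypothesis is essential because it feeds Ginzburg's persistence theorem, which anchors one closed $(g,\lambda_n f)$-geodesic with a \emph{fixed} self-intersection number $m$ for all large $n$; Lemma \ref{lem:boundonlength} then bounds lengths in that class, Lemma \ref{lem:openandclose} shows the class $\mathcal E_n^m$ is open and closed, connectedness of the orbit space (the $S^1$-action is free, so there are no exceptional fibers) makes \emph{all} orbits have length $\le c(m)$, and $C^\infty_{\mathrm{loc}}$-convergence to the geodesic flow forces every geodesic to close up -- contradiction. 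Your residual-set reading would, if the steps held, prove that \emph{all} geodesics are closed with no genericity assumption at all (under the paper's definition of Zoll the action is free, so your exceptional set is empty), i.e.\ it would prove what the authors explicitly state they cannot: they remark after the proof that dropping genericity would require a persistence result for \emph{degenerate} closed geodesics, which is unknown. That your argument nowhere uses a persistence input is the structural symptom of the missing uniform period bound.
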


In order to explain the sentence ``$g$ must generically be Zoll'' we reformulate Proposition \ref{prop:genericallyzoll} as follows: 
if the metric $g$ possesses a non-degenerate closed geodesic (which is well-known to be a generic property) 
then there exists some $\lambda_-=\lambda_-(g,f)>0$ such that for all $\lambda <\lambda_-$
the magnetic system $(g,\lambda f)$ cannot be Zoll.

Thus, let $(g,f)$ be a fixed magnetic system such that $(g,\lambda_n f)$ is Zoll for some sequence $\lambda_n\downarrow 0$. We set
 $$[f] := \frac{1}{\text{Area}(S^2,g)} \int_{S^2} f\, \mu_g$$
 to be the average of $f$. Clearly, we can suppose $[f]\geq 0$. 
 For every $n\in \N$ and every $m\in\N_0$ we define
\begin{align*}
 \mathcal E_n^m & := \Big \{ \text{(prime) closed } (g,\lambda_n f)\text{-geodesics with precisely } m \ \text{self-intersections}\Big \}, \\
  \mathcal E_n & := \bigcup_{m\in \N_0}\ \mathcal E_n^m.
  \end{align*}
In particular, $\mathcal E_n^0$ is the set of embedded closed $(g,\lambda_n f)$-geodesics. 
Here self-intersections are counted with multiplicity, meaning that, if there are more than two branches of the 
$(g,\lambda_n f)$-geodesic $\gamma$ intersecting transversally at a point, 
then we slightly perturb $\gamma$ so that the resulting curve has only simple transversal intersections and define the multiplicity of the 
original intersection point as the minimal number of simple self-intersections arising after perturbation. Notice that, unlike geodesics, 
$(g,\lambda_n f)$-geodesics might have self-tangencies; each self-tangency should count one in the total multiplicity.  

The first step towards the proof of Proposition \ref{prop:genericallyzoll} is to show that it is possible to find an upper bound on
the length of (closed) $(g,\lambda_n f)$-geodesics which depends only on the number of self-intersections. This property holds certainly 
true for general magnetic systems (i.e. independently of the fact that the system be Zoll or not), but we don't need it here in its full generality. 

\begin{lem} For every $m\in\N_0$ there exists $c=c(m)>0$ such that for all 
$n\in \N $ and all $\gamma\in \mathcal E_n^m$ we have 
$$\ell_g(\gamma) \leq c.$$
\label{lem:boundonlength}
\end{lem}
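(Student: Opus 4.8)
The plan is to express the length of a closed magnetic geodesic through the free-period action functional $\A^{\lambda_n}$ and to exploit that, since $(g,\lambda_n f)$ is Zoll, this action takes the same value on all prime closed $(g,\lambda_n f)$-geodesics. For a closed $(g,\lambda_n f)$-geodesic $\gamma$ of length $\ell_g(\gamma)$, parametrising by arc-length and using the free-period formulation \eqref{freeperiod2} with a capping $2$-chain $D_\gamma$, one obtains the exact identity
\begin{equation*}
\ell_g(\gamma) = \A^{\lambda_n}(\gamma) + \lambda_n \int_{D_\gamma} f\mu_g .
\end{equation*}
Thus a bound on $\ell_g(\gamma)$ will follow from (i) a uniform upper bound on the common action value $A_n := \A^{\lambda_n}(\gamma)$, and (ii) a bound on the flux $\int_{D_\gamma} f\mu_g$ in terms of $m$ only.

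For (ii) I would take $D_\gamma$ to be a capping chain of minimal winding number. An immersed closed curve on $S^2$ with $m$ self-intersections cuts $S^2$ into $m+2$ disk faces whose dual graph is connected, so the associated winding-number function $w_\gamma$ has oscillation at most $m+1$; normalising one face to $0$ gives $\|w_\gamma\|_\infty\le m+1$, whence $|\int_{D_\gamma} f\mu_g| = |\int_{S^2} w_\gamma\, f\mu_g| \le (m+1)\|f\|_\infty\,\mathrm{Area}(S^2,g)$. Self-tangencies are absorbed into the perturbation defining the multiplicity of an intersection point, so the same bound persists.

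For (i) I would bound $A_n$ from above uniformly in $n$ via a fixed Birkhoff-type sweepout of $S^2$ by loops: evaluating $\A^{\lambda_n}$ on this fixed family, its maximum is $L + \lambda_n\cdot(\text{flux of the sweepout loops})\le L + \lambda_1 F'$ since $\lambda_n\le\lambda_1$, yielding a min-max critical value (the action of some closed $(g,\lambda_n f)$-geodesic) bounded above by a constant $C$ independent of $n$. As $(g,\lambda_n f)$ is Zoll, the systolic-diastolic inequality of \cite{Benedetti:2018c} collapses systole and diastole, so every prime closed magnetic geodesic, in particular every $\gamma\in\mathcal E_n^m$, realises this same value; hence $A_n\le C$. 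Combining, for all $n$ and all $\gamma\in\mathcal E_n^m$,
\begin{equation*}
\ell_g(\gamma) \le C + \lambda_1 (m+1)\|f\|_\infty\,\mathrm{Area}(S^2,g) =: c(m).
\end{equation*}

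I expect the main obstacle to be the action/flux bookkeeping on $S^2$: because $f\mu_g$ need not be exact and admits no bounded primitive, the functional $\A^{\lambda_n}$ is a priori defined only modulo the quantum $\lambda_n\int_{S^2}f\mu_g$, so the ``common value'' coming from Zollness must be interpreted and pinned down in such a way that the exact identity above, together with the minimal-winding capping, still produces an honest upper bound; this is exactly where the control of the winding number by $m$ is essential. A secondary point is to check that the magnetic analogue of Birkhoff's min-max genuinely yields a closed $(g,\lambda_n f)$-geodesic whose action is captured by the fixed sweepout bound uniformly as $\lambda_n\downarrow 0$.
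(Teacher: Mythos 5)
There is a genuine gap, and you have in fact put your finger on it yourself without closing it: the two halves of your argument are computed in \emph{different normalizations} of the multivalued action, and nothing in your proposal reconciles them. On $S^2$ the functional $\A^{\lambda_n}$ on contractible loops is defined only up to the quantum $\lambda_n\int_{S^2}f\mu_g$, and the ``common value on all prime closed orbits'' coming from Zollness (the systolic--diastolic equality of \cite{Benedetti:2018c}) holds for one specific normalization: the \emph{admissible} capping disks, obtained by projecting a homotopy in $SS^2$ between the tangential lift $(\gamma,\dot\gamma)$ and the fibre. Your winding-number estimate, on the other hand, controls the flux through the \emph{minimal-winding} capping. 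Two cappings of the same loop differ by $k[S^2]$ for some $k=k(\gamma)\in\Z$, so the two flux values differ by $k\,\lambda_n\int_{S^2}f\mu_g$, and your argument provides no bound on $|k|$ in terms of $m$. Bounding exactly this discrepancy is the actual content of the paper's proof: it recursively resolves the $m$ self-intersections of $\gamma$ by homotopies inside $SS^2$, each resolution changing the admissible-capping flux by at most $\lambda_n\|f\|_\infty\mathrm{Area}(S^2,g)$, reducing $(\gamma,\dot\gamma)$ to a $j$-fold iterate of the fibre with $j\leq m$ and then unwinding the iterate, for a total admissible-flux bound of $(2m+1)\lambda_n\|f\|_\infty\mathrm{Area}(S^2,g)$. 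Your combinatorial estimate (Euler count gives $m+2$ faces, connected dual graph, oscillation of the winding function at most $m+1$) is correct, and even numerically sharper than $2m+1$ --- but it bounds the flux of the wrong capping, so your final inequality does not follow.

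Two secondary points. First, your Birkhoff sweepout step is unnecessary: in the Zoll case the inequality of \cite{Benedetti:2018c} is an \emph{equality}, giving the exact common value $\frac{2\pi}{\lambda_n[f]+\sqrt{\lambda_n^2[f]^2+2\pi/\mathrm{Area}(S^2,g)}}\leq\sqrt{2\pi\,\mathrm{Area}(S^2,g)}$ of the admissible-capping action on every closed $(g,\lambda_n f)$-geodesic, uniformly in $n$; this is how the paper gets your constant $C$ for free. Second, even if you retain the min-max argument, it faces the same ambiguity: the sweepout pins down its own capping normalization, and transferring the bound from the min-max orbit to an arbitrary $\gamma\in\mathcal E_n^m$ via ``the action is constant'' again requires knowing both actions are taken with admissible cappings. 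So the missing ingredient in either variant is a bound, in terms of $m$ alone, on the winding of the admissible capping (equivalently on $|k(\gamma)|$), which is precisely the homotopy-resolution argument of the paper.
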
 
\vspace{-7mm}
\begin{proof}
By the systolic inequality for magnetic systems close to a Zoll one (see \cite{Benedetti:2018c}) 
we have that, for every $n\in \N$ and for every (closed) $(g,\lambda_n f)$-geodesic $\gamma$,
\begin{align*}
\ell_g(\gamma) + \int_{\mathbb D^2} C_\gamma^* (\lambda_n f \mu_g ) &= \frac{2\pi}{\lambda_n [f] + \sqrt{\lambda_n^2[f]^2 + \frac{2\pi}{\text{Area}(S^2,g)}}} \\
&\leq \sqrt{2\pi \text{Area}(S^2,g)},\end{align*}
where $\ell_g(\gamma)$ denotes the length of $\gamma$ and $C_\gamma:\mathbb D^2\to S^2$ is any admissible capping disk for $\gamma$. Since for any
$\gamma\in \mathcal E_n^0$ the admissible capping disk can be chosen to be embedded, we obtain for all $n\geq n_0$
\begin{align*}
\ell_g(\gamma) &\leq  \sqrt{2\pi \text{Area}(S^2,g)} - \left | \int_{\mathbb D^2} C_\gamma^* (\lambda_n f \mu_g ) \right |\\
			&\leq  \sqrt{2\pi \text{Area}(S^2,g)} + \lambda_n \|f\|_\infty \text{Area}(S^2,g) \\
			&\leq  \sqrt{2\pi \text{Area}(S^2,g)} + \lambda_1 \|f\|_\infty \text{Area}(S^2,g)\\
			&=: c(0).
\end{align*}

For $m\in \N$ the proof goes along the same lines, but employs a finer estimate of the second integral.
For that we need to recall the definition of admissible capping disks from \cite{Benedetti:2018c}. 
For a Zoll magnetic system $(\tilde g,\tilde f)$ the tangential lift $(\gamma,\dot \gamma)$ of a (closed) 
$(\tilde g,\tilde f)$-geodesic is freely homotopic to the fibre of the unit tangent bundle; therefore, we can find a homotopy $\Gamma$ between the fibre and 
$(\gamma,\dot \gamma)$. Admissible capping disks arise now by projecting such homotopies to $S^2$ under the bundle projection.
It can be shown that 
\begin{equation}
\int_{\mathbb D^2} C_\gamma^* (\tilde f \mu_{\tilde g} )
\label{eq:capping}
\end{equation}
does not depend on the choice of the homotopy (hence, on the admissible capping disk).

Let now $\gamma\in \mathcal E_n^m$, and let $(\gamma,\dot \gamma)$ be its tangential lift. Up to an arbitrary small perturbation 
we can assume that all self-intersections of $\gamma$ are simple and that there are no self-tangencies (this perturbation will change the  
value of the integral in \eqref{eq:capping} by a small constant, and hence it is for all estimates not relevant).  
Since the tangential lift of 
$\gamma$ is freely homotopic to the fibre, we can find $t_1>t_0$ such that 
$$q:=\gamma(t_0)=\gamma(t_1), \quad \dot \gamma(t_0)\neq \pm \dot \gamma(t_1),$$
and  $\gamma_0:=\gamma|_{[t_0,t_1]}$ is an embedding (in particular, $\gamma_0$ has turning number $\pm 1$ in 
$S^2\setminus \{p\}$, for some $p\in S^2$). 
We fix $\epsilon >0$ small and pick a homotopy $h$ from $(\gamma_\epsilon := \gamma|_{[t_0-\epsilon,t_1+\epsilon]},\dot \gamma_\epsilon)$ to 
$(\nu_\epsilon,\dot{\nu}_\epsilon)$ that fixes $q$ and (a neighborhood of) the endpoints of $\gamma_\epsilon$, such that $\nu_\epsilon$ 
is (again) an embedding outside $q$ with parallel tangent vectors at $q$. This homotopy will clearly add some bounded quantity to  the  
value of the integral in \eqref{eq:capping}. 
Now we can homotope the tangential lift of the part of $\nu_\epsilon$ that starts from and ends in $q$ with the fibre in $SS^2$. This 
yields a contribution to the integral in \eqref{eq:capping} which is given by the
area of the disk bounded by the considered part of $\nu_\epsilon$ and hence is in absolute value smaller than $\lambda_n \|f\|_\infty$Area$(S^2,g)$ (see Figure \ref{f:homotopy}).
\begin{figure}[h]
\begin{small}
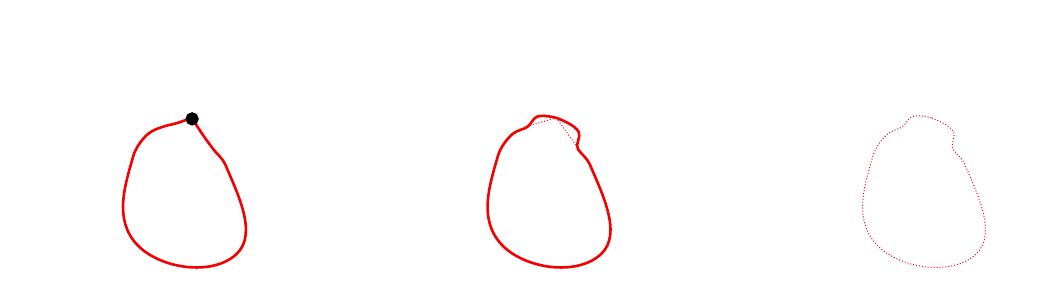 
\caption{Resolving one intersection point for $\gamma$ yields a contribution to \eqref{eq:capping} smaller than $\lambda_n \|f\|_\infty \text{Area}(S^2,g)$.}
\label{f:homotopy}
\end{small}
\end{figure}

\noindent Therefore, up to the constants arising after perturbation we have that 
$$\left| \int_{\mathbb D^2} C_\gamma^* (\lambda_n f \mu_g )\right | \leq \lambda_n \|f\|_\infty \text{Area}(S^2,g) +\left | \int_{\mathbb D^2} C_{\gamma_1}^* (\lambda_n f \mu_g )\right |,$$
where $\gamma_1$ is the smooth loop given by the concatenation of $\gamma|_{[0,1]\setminus [t_0-\epsilon,t_1+\epsilon]}$ with the remaining 
portion of $\nu_\epsilon$. By construction $\gamma_1$ has one self-intersection less than $\gamma$. Repeating the same 
procedure recursively we obtain a homotopy from $(\gamma,\dot \gamma)$ to some $j$-th iterate of the fibre, $j\leq m$, and then we still have to 
homotope the ``iterated fibre'' to the simple fibre. The first procedure will contribute (up to constants) to the integral with at most $m$-times 
$\lambda_n\|f\|_\infty$Area$(S^2,g)$, whereas the last homotopy yields at most another $j$-times $\lambda_n\|f\|_\infty$Area$(S^2,g)$. Combining all these estimates we obtain that 
\begin{equation*}\left| \int_{\mathbb D^2} C_\gamma^* (\lambda_n f\mu_g )\right | \leq  (2m+1)\lambda_n \|f\|_\infty \text{Area}(S^2,g).\qedhere \end{equation*}
\end{proof}

\vspace{2mm}

Using the bound given by Lemma \ref{lem:boundonlength} we show now that we can generalize to the magnetic setting the
following fact for Riemannian metrics on $S^2$ all of whose geodesics are closed: 
the set of embedded closed geodesics of a Riemannian metric on $S^2$ all of whose geodesics are closed 
is open and close. Notice that this is one main ingredient in the proof of Berger's conjecture for $S^2$; see \cite{Gromoll:1981}.

\begin{lem}
For all $m\in \N_0$ there is $n_0=n_0(m)\in \N$ such that for all $n\geq n_0$ the set $\mathcal E_n^m$ is open and closed in $\mathcal E_n$.
\label{lem:openandclose}
\end{lem}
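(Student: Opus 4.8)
The plan is to show that, once $n$ is large enough, the number of self-intersections is locally constant on $\mathcal E_n$; since $\mathcal E_n=\bigsqcup_m\mathcal E_n^m$, this yields simultaneously that each $\mathcal E_n^m$ is open and closed. Along a $C^\infty$-path of immersed closed curves the self-intersection number (with the multiplicity convention above) can change only at a \emph{self-tangency}, where two transversal double points merge into a point at which two branches are tangent (a vanishing small loop is a limiting case of the same phenomenon); transversal multiple points, by contrast, contribute to the count exactly their generic perturbation value and so do not affect local constancy. Thus the whole statement reduces to excluding self-tangencies of the relevant closed magnetic geodesics. The geometric heart of the matter is that a closed $g$-geodesic has no self-tangency: if $\gamma(t_0)=\gamma(t_1)$ with $\dot\gamma(t_0)=\dot\gamma(t_1)$ then $\gamma$ is periodic of period $t_1-t_0$, violating primality, while if $\dot\gamma(t_0)=-\dot\gamma(t_1)$ then, using that geodesics are time-reversible, the reversed geodesic $s\mapsto\gamma(t_1-s)$ has the same initial condition as $s\mapsto\gamma(t_0+s)$ and hence $\gamma(t_0+s)=\gamma(t_1-s)$ for all $s$, forcing $\dot\gamma$ to vanish at the midpoint time $(t_0+t_1)/2$ --- impossible for a unit-speed curve. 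We stress that this argument fails for magnetic geodesics, which are \emph{not} time-reversible (the reversal of a $(g,f)$-geodesic is a $(g,-f)$-geodesic); this is exactly why the limit $\lambda_n\to0$ must be invoked.

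First I would prove that there is $n_0(m)$ such that for $n\ge n_0(m)$ no $\gamma\in\mathcal E_n^m$ carries a self-tangency. If this failed, there would be $\gamma_n\in\mathcal E_n^m$, along a subsequence, each with a self-tangency at times $t_0^n\neq t_1^n$ (necessarily of the type $\dot\gamma_n(t_0^n)=-\dot\gamma_n(t_1^n)$, the equal-velocity case being excluded by primality). By Lemma \ref{lem:boundonlength} the lengths $\ell_g(\gamma_n)$ are uniformly bounded above, and they are bounded away from $0$ since the geodesic curvature $\lambda_n f$ of $\gamma_n$ tends to $0$; hence a further subsequence converges in $C^\infty$ to a nonconstant closed $g$-geodesic $\gamma_\infty$, with $t_i^n\to t_i^\infty$. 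If $t_0^\infty\neq t_1^\infty$ the tangency passes to $\gamma_\infty$, contradicting the previous paragraph; if $t_0^\infty=t_1^\infty$ then $\dot\gamma_\infty(t_0^\infty)=-\dot\gamma_\infty(t_0^\infty)=0$, again a contradiction. Consequently, for $n\ge n_0(m)$ every $\gamma\in\mathcal E_n^m$ has only transversal self-intersections; these persist under $C^\infty$-small perturbations and, by compactness, no new ones can appear nearby, so the self-intersection number is constant on a neighborhood of $\gamma$ in $\mathcal E_n$. This shows that $\mathcal E_n^m$ is open.

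For closedness I would run the limit argument a second time. If $\mathcal E_n^m$ failed to be closed for arbitrarily large $n$, we would obtain $n_j\to\infty$ together with $\gamma_{j,k}\to_k\gamma_j^{\ast}$ where $\gamma_{j,k}\in\mathcal E_{n_j}^m$ but $\gamma_j^{\ast}\notin\mathcal E_{n_j}^m$. Were all self-intersections of $\gamma_j^{\ast}$ transversal, the stability used above would force their number to equal $m$, i.e. $\gamma_j^{\ast}\in\mathcal E_{n_j}^m$; hence $\gamma_j^{\ast}$ must carry a self-tangency. As a $C^\infty$-limit of curves of length $\le c(m)$ the curve $\gamma_j^{\ast}$ itself has length $\le c(m)$, so letting $j\to\infty$ produces, exactly as before, a closed $g$-geodesic with a self-tangency --- the same contradiction. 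Therefore $\mathcal E_n^m$ is closed for $n\ge n_0(m)$ as well (enlarging $n_0(m)$ if necessary).

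The main obstacle is precisely the instability of the self-intersection count at self-tangencies, aggravated by the non-reversibility of magnetic geodesics, which prevents the clean Riemannian exclusion of tangent self-returns from being applied at a fixed value of $\lambda_n$. The device that defeats this is to feed the \emph{uniform} length bound of Lemma \ref{lem:boundonlength} into a compactness argument in the regime $\lambda_n\to0$: every offending tangent configuration then degenerates to a closed $g$-geodesic, for which the unit-speed reversibility argument applies and yields a contradiction. The only technical points left to verify --- that the limiting $g$-geodesics are nonconstant and that one need not worry about the prime period collapsing in the limit --- are both handled by the two-sided length bounds.
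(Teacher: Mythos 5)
Your proof is correct and follows essentially the same route as the paper's: both hinge on excluding self-tangencies of curves in $\mathcal{E}_n^m$ for all large $n$ by combining the uniform length bound of Lemma \ref{lem:boundonlength} with the $C^\infty_{\mathrm{loc}}$-convergence of magnetic geodesics to geodesics as $\lambda_n\downarrow 0$ and the fact that a unit-speed geodesic admits no reversed tangent self-return, and then deduce openness and closedness from the $C^1$-stability of transversal self-intersection counts (with continuity of the periods, which you use tacitly, supplied by the Zoll assumption). The only difference is cosmetic: the paper quantifies the no-return property as a uniform Sasaki-distance estimate $\mathrm{dist}_{g_{TS^2}}\big((q,-v),\phi^t_g(q,v)\big)\geq \delta(T)$ for $t\in[0,T]$ and transfers it to the magnetic flow by uniform flow convergence, whereas you reach the same conclusion by a compactness/contradiction argument extracting a limit closed geodesic carrying a self-tangency.
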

\begin{proof}
Fix $m\in \N_0$ and denote by $g_{TS^2}$ the Sasaki metric on $TS^2$. For all $T>0$ there exists $\delta=\delta(T)>0$ such that\footnote{Such an inequality might not be true if one does not fix $T$, as there might be e.g. geodesics whose tangential lift is dense.}
$$\text{dist}_{g_{TS^2}} \big ( (q,-v) , \phi^t_g (q,v) \big ) \geq \delta, \quad \forall t\in [0,T], \ \forall (q,v)\in SS^2,$$
where $\phi_g^t$ denotes the geodesic flow. By Lemma \ref{lem:boundonlength}, we can find $c=c(m)>0$ such that, for every $n\in \N$, every 
$\gamma\in \mathcal E_n^m$ has length (hence period) less than $c$. 
Since $(g,\lambda_n f)$-geodesics $C^\infty_{\text{loc}}$-converge to geodesics for $\lambda\to 0$, we find $n_0=n_0(m)\in \N$ such that for all $n\geq n_0$ 
$$\text{dist}_{g_{TS^2}} \big (\phi^t_g (q,v), \phi^t_{g,\lambda_n f}(q,v) \big ) <\frac \delta 2, \quad \forall t\in [0,c], \ \forall (q,v)\in SS^2,$$
where $\phi^t_{g,\lambda_n f}$ denotes the flow on $SS^2$ induced by the magnetic system $(g,\lambda_n f)$ via \eqref{prescribedcurvature}.
Therefore, for all $t\in [0,c]$, all $(q,v)\in SS^2$, and all $n\geq n_0$, we obtain 
\begin{align*}
\text{dist}_{g_{TS^2}} \big ( (q,-v) , \phi^t_{g,\lambda_n f} (q,v) \big )  & \geq \text{dist}_{g_{TS^2}} \big ( (q,-v) , \phi^t_g (q,v) \big ) - \text{dist}_{g_{TS^2}} \big (\phi^t_g (q,v), \phi^t_{g,\lambda_n f}(q,v) \big ) \geq \frac{\delta}{2},
\end{align*}
that is, $(g,\lambda_n f)$-geodesics cannot have self-tangencies before time $c$. Hence, in particular elements in $\mathcal E_n^m$ cannot have 
self-tangencies for all $n\geq n_0$.  This implies that $\mathcal E_n^m$ is 
open and closed for every $n\geq n_0$. Indeed, openness follows immediately from the fact that any sequence of $(g,\lambda_n f)$-geodesics which is 
$C^\infty_{\text{loc}}$-converging to $\gamma\in \mathcal E_n^m$ is 
actually $C^\infty$-converging to $\gamma$ (the periods vary smoothly for a Zoll magnetic system)
combined with the fact that having $m$ transversal self-intersecions is a $C^1$-open condition. 
On the other hand, the limit $\gamma$ of a sequence $\{\gamma_\ell\}\subset \mathcal E_n^m$ cannot have more self-intersections that any of the $\gamma_\ell$'s, 
and since $(g,\lambda_n f)$ is Zoll we also have that $\gamma$ cannot 
have less self-intersections than any of the $\gamma_\ell$'s (since $\gamma$ cannot have self-tangencies, the number of self-intersections can only jump if there is a 
jump in the periods). 
\end{proof}

\begin{cor}
The following holds: 
\begin{enumerate}[a)]
\item Let $m\in\N_0$ be fixed. If for some $n\geq n_0$, where $n_0$ is given by Lemma \ref{lem:openandclose}, 
we have that $\mathcal E_n^m \neq \emptyset$, then all $(g,\lambda_n f)$-geodesics have precisely $m$ (transversal) self-intersections.  
\item If $\mathcal E_{n_k}^{m_k}\neq \emptyset$ for some sequence $n_k\to +\infty$ and some bounded sequence $m_k$ then $g$ is Zoll.
\end{enumerate}
\label{cor:Zoll}
\end{cor}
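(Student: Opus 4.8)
The plan is to derive a) directly from Lemma \ref{lem:openandclose} together with the observation that the space $\mathcal E_n$ of prime closed $(g,\lambda_n f)$-geodesics is connected, and then to obtain b) by letting $\lambda_n\to 0$ and passing to the limit of the corresponding closed magnetic geodesics.

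For part a), recall that since $(g,\lambda_n f)$ is Zoll its flow orbits form a free $S^1$-action on $SS^2$, so that assigning to a prime closed $(g,\lambda_n f)$-geodesic the $S^1$-orbit of one of its initial conditions identifies $\mathcal E_n$ with the orbit space $SS^2/S^1$, endowed with the topology for which Lemma \ref{lem:openandclose} is phrased. As $SS^2\cong \mathbb{RP}^3$ is connected, so is its quotient $\mathcal E_n$. By Lemma \ref{lem:openandclose} the subset $\mathcal E_n^m$ is open and closed in $\mathcal E_n$ for $n\geq n_0(m)$; being moreover nonempty by hypothesis, it must coincide with the whole connected space $\mathcal E_n$. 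Since the action is free, every $(g,\lambda_n f)$-geodesic traces a prime orbit and hence lies in $\mathcal E_n=\mathcal E_n^m$, i.e. has precisely $m$ transversal self-intersections.

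For part b), since $\{m_k\}$ is a bounded sequence of non-negative integers it takes some value $m$ infinitely often; passing to the corresponding subsequence I may assume $m_k\equiv m$ and $n_k\uparrow +\infty$, so that for $k$ large $n_k\geq n_0(m)$ with $\mathcal E_{n_k}^m\neq\emptyset$. By part a) every $(g,\lambda_{n_k} f)$-geodesic then has exactly $m$ self-intersections, and by Lemma \ref{lem:boundonlength} its length, hence its period, is at most $c(m)$. Fixing an arbitrary $(q,v)\in SS^2$, let $T_k\leq c(m)$ be the period of the closed $(g,\lambda_{n_k} f)$-geodesic through $(q,v)$, so that $\phi^{T_k}_{g,\lambda_{n_k} f}(q,v)=(q,v)$. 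Since $\lambda_{n_k}\to 0$, the flows $\phi^t_{g,\lambda_{n_k} f}$ converge in $C^\infty_{\text{loc}}$ to the geodesic flow $\phi^t_g$, and I would extract a subsequence with $T_k\to T\in[0,c(m)]$; then $\phi^T_g(q,v)=\lim_k \phi^{T_k}_{g,\lambda_{n_k} f}(q,v)=(q,v)$, so the $g$-geodesic through $(q,v)$ is closed with period $T$. As $(q,v)$ is arbitrary this shows all geodesics of $g$ are closed, hence $g$ is Zoll.

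The main obstacle is to rule out $T=0$ in the last step, i.e. to exclude that the closed magnetic geodesics collapse in the limit. I would resolve this by a uniform lower bound on periods: denoting by $X_\lambda$ the generating vector field of $\phi^t_{g,\lambda f}$, these converge in $C^\infty_{\text{loc}}$ to the geodesic spray, which is nowhere vanishing on the compact manifold $SS^2$; hence there is $c_0>0$ with $|X_{\lambda_{n_k}}|\geq c_0$ on $SS^2$ for $k$ large, and a first-order expansion of the flow gives $\mathrm{dist}_{g_{TS^2}}\big(\phi^t_{g,\lambda_{n_k} f}(q,v),(q,v)\big)\geq \tfrac{c_0}{2}\,t$ for all sufficiently small $t$, uniformly in $k$. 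This forces $T_k\geq \tau_0$ for some $\tau_0>0$ and all large $k$, so $T\geq\tau_0>0$, completing the argument.
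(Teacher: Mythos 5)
Your proof is correct and follows essentially the same route as the paper: part a) is precisely the clopen-plus-connectedness argument (via the identification of $\mathcal E_n$ with $SS^2/S^1$) that the paper dismisses as ``clear'', and part b) is the contrapositive of the paper's argument, which assumes a non-closed geodesic and deduces $\ell_g\big(t\mapsto \phi^t_{g,\lambda_{n_k} f}(q,v)\big)\to+\infty$ from the $C^\infty_{\mathrm{loc}}$-convergence of the magnetic flows to the geodesic flow, contradicting the uniform bound $c(m)$ supplied by part a) together with Lemma \ref{lem:boundonlength}. Your explicit exclusion of $T=0$ via the uniform lower bound $|X_{\lambda}|\geq c_0$ on the generating vector fields correctly settles a non-collapse point that the paper's one-line proof leaves implicit.
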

\begin{proof}
The first assertion is clear. Suppose now that there is a non-closed geodesic 
$t\mapsto \phi^t_g(q,v)$. Then from the $C^\infty_{\text{loc}}$-convergence of $(g,\lambda_n f)$-geodesics to geodesics for $n \to +\infty$ 
we deduce 
$$\ell_g (t\mapsto \phi^t_{g,\lambda_n f}(q,v) ) \to +\infty,$$
thus contradicting the assumption. Therefore, every geodesic in $(S^2,g)$ is closed, and hence $g$ is Zoll. 
\end{proof}

\begin{proof}[Proof of Proposition \ref{prop:genericallyzoll}]
Suppose that $(g,f)$ is such that $(g,\lambda_n f)$ is Zoll for some sequence $\lambda_n\downarrow 0$, 
and that $g$ has a non-degenerate closed geodesic $\gamma$ (in particular $g$ is not Zoll). 
Then, following \cite{Ginzburg:1994} we find for all $n$ large enough a closed $(g,\lambda_n f)$-geodesic in a neighborhood of $\gamma$, 
which will therefore have the same number of self-intersections as $\gamma$. By Corollary \ref{cor:Zoll}, this implies that $g$ is 
Zoll, a contradiction.
\end{proof}

We believe that Proposition \ref{prop:genericallyzoll} holds also non generically. Indeed, Lemmas
\ref{lem:boundonlength} and \ref{lem:openandclose} do not require any non-degeneracy assumption, and in order to apply
Corollary \ref{cor:Zoll} we only need the existence of a closed geodesic which persists under magnetic perturbations. 
However, to our best knowledge nothing like Ginzburg's result \cite{Ginzburg:1994} is known for degenerate closed geodesics. 
Therefore, in order to extend Proposition \ref{prop:genericallyzoll} one has either to use other methods or to first better understand 
how degenerate closed geodesics behave under magnetic perturbations, which is in any case a problem of independent interest. 

We finish this section showing that Proposition \ref{prop:genericallyzoll} can be improved in the rotationally symmetric setting by dropping the genericity assumption.
In the statement of the next lemma we denote with $(\theta,\varphi)\in (0,\pi)\times \R$ spherical coordinates on $S^2$.  

\begin{lem}
Let $(g,f)$ be a rotationally symmetric magnetic system on $S^2$ such that $(g,\lambda_n f)$ is Zoll for some sequence $\lambda_n \to 0$. 
Then $g$ is a Zoll metric of revolution.
\label{lem:alwayszoll}
\end{lem}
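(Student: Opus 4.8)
The plan is to exploit the rotational symmetry to reduce the problem to a one-dimensional statement about the profile function of $g$, and then to use the already-established Corollary \ref{cor:Zoll} to conclude that $g$ is Zoll. First I would write the metric in the normal form $g = \diff\theta^2 + \beta(\theta)^2 \diff\varphi^2$ for some positive function $\beta$ on $(0,\pi)$ (with the usual smoothness/boundary conditions at the poles ensuring a genuine metric on $S^2$), and observe that by rotational invariance $f = f(\theta)$ depends only on the polar angle. The key structural fact is that the geodesic flow of a surface of revolution is completely integrable, with Clairaut's relation $\beta(\theta)\cos\alpha = \mathrm{const}$ (where $\alpha$ is the angle the geodesic makes with the parallels) providing a conserved quantity along geodesics. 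The same symmetry persists for the magnetic system $(g,\lambda_n f)$, so each magnetic flow is also integrable and admits an $S^1$-symmetry coming from rotations.

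The heart of the argument is to produce, for each large $n$, a closed $(g,\lambda_n f)$-geodesic whose number of self-intersections stays bounded as $n\to\infty$, so that part b) of Corollary \ref{cor:Zoll} applies and forces $g$ to be Zoll. The natural candidates are the \emph{parallels}: for a surface of revolution, a parallel $\{\theta = \theta_0\}$ is a geodesic exactly when $\beta'(\theta_0)=0$, i.e. at a critical point of $\beta$. Since $\beta$ attains a maximum on $(0,\pi)$, there is always at least one such critical parallel, which is an embedded closed geodesic (zero self-intersections). I would then invoke the persistence mechanism: by rotational symmetry the $S^1$-invariant circle over a critical parallel survives as a rotationally symmetric closed $(g,\lambda_n f)$-geodesic for all small $\lambda_n$ — concretely, a circle at constant latitude $\theta_\lambda$ is a $(g,\lambda_n f)$-geodesic precisely when the geodesic-curvature condition \eqref{prescribedcurvature} reduces to an algebraic equation relating $\beta'(\theta_\lambda)/\beta(\theta_\lambda)$ to $\lambda_n f(\theta_\lambda)$, and by the implicit function theorem a solution $\theta_\lambda$ near a nondegenerate critical point of $\beta$ exists and depends continuously on $\lambda_n$. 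Such a latitude circle is embedded, hence lies in $\mathcal{E}_{n}^0$, and this persists for a whole sequence $n_k \to \infty$ with $m_k \equiv 0$ bounded.

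Therefore $\mathcal{E}_{n_k}^0 \neq \emptyset$ for $n_k \to +\infty$, and Corollary \ref{cor:Zoll}(b) immediately yields that $g$ is Zoll. Being rotationally symmetric and Zoll, $g$ is a Zoll metric of revolution, which is exactly the assertion. The main obstacle I anticipate is the degenerate case: if $\beta$ has no \emph{nondegenerate} critical point (so the implicit function theorem cannot be applied naively), one must still argue that an invariant latitude-circle magnetic geodesic survives. Here I would fall back on the symmetry directly — the equation for an invariant circle is $\beta'(\theta)/\beta(\theta) = \pm\lambda_n f(\theta)\,\beta(\theta)/\beta(\theta)$, a single scalar equation on the compact interval, and an intermediate-value or degree argument (comparing the signs of $\beta'$ across a maximum, which change regardless of degeneracy) produces a solution $\theta_\lambda$ for every small $\lambda_n$ without any nondegeneracy hypothesis. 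This is precisely where the rotational symmetry lets us bypass the Ginzburg-type persistence result used in the generic case of Proposition \ref{prop:genericallyzoll}, and is the reason the genericity assumption can be dropped.
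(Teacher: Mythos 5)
Your proposal is correct and follows essentially the same route as the paper: in both cases the rotational symmetry produces embedded latitude orbits (elements of $\mathcal E_n^0$) for every small $\lambda_n$, and Corollary \ref{cor:Zoll} then forces $g$ to be Zoll. The only difference is cosmetic: the paper cites the first integral $I$ of the symmetric system (whose critical points are latitudes, giving at least two embedded periodic orbits via \cite[Proposition 5.9]{Benedetti:diss}), whereas you derive the latitude condition $\beta'(\theta)=\pm\lambda_n f(\theta)\beta(\theta)$ explicitly and solve it by the intermediate value theorem across a maximum of $\beta$ --- a self-contained substitute that, as you note, correctly bypasses any nondegeneracy assumption.
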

\begin{proof}
It is well-known (cf. \cite[Section 5]{Benedetti:diss}) that 
rotationally invariant magnetic systems admit a first integral $I$ whose critical points correspond to \textit{latitudes}, that is, 
periodic orbits with constant $\theta$-coordinate. Since $I$ has at least two critical points, this yields the existence 
of at least two embedded periodic orbits; see \cite[Proposition 5.9]{Benedetti:diss}. The claim follows now from Corollary \ref{cor:Zoll}.
\end{proof}


\section{Persistence of stable waists.}
\label{section:5}
We recall that a \textit{waist} for a Riemannian metric $g$ on $S^2$ is a closed geodesic which is a local minimizer of the length, or, equivalently,
of the free period action functional 
$\A$ given by \eqref{freeperiod1}. 
We call a waist $\gamma=(\Gamma,T)$ \textit{stable} if there exists a bounded neighborhood $\U \subset H^1(\T,S^2)\times (0,+\infty)$ 
of the critical circle $S^1 \cdot \gamma$ such that 
\begin{equation}
\inf_\U \A = \A(\gamma),\quad \text{and}\ \ \inf_{\partial \U} \A > \A(\gamma).
\label{stableneighborhood}
\end{equation}

\begin{thm}
Let $g$ be a metric on $S^2$ possessing a stable waist. Then for every magnetic function $f$ there exists $\Lambda =\Lambda(g,f)>0$ such that for every $\lambda\in (0,\Lambda)$ 
the magnetic pair $(g,\lambda f)$ has a closed magnetic geodesic $\gamma_{\lambda}$ which is a local minimizer of the free-period action functional $\A^\lambda$ given by \eqref{freeperiod2}. 
\label{thm:waistthm}
\end{thm}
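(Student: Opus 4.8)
The plan is to reduce Theorem~\ref{thm:waistthm} to a localized minimization problem for the magnetic action $\A^\lambda$ and to perturb the stable minimum of $\A$, in the same spirit as the proof of Proposition~\ref{prop:persistencetorus}. The point of using a variational (rather than a dynamical) argument is precisely that the magnetic perturbation is \emph{not} small in the Ma\~n\'e sense even for small $\lambda$, so one cannot hope to track the waist through the flow; minimization, on the other hand, is robust. Two features distinguish the $S^2$ situation: the waist $\gamma$ is only a \emph{local} minimizer of $\A$, so the whole argument must stay inside the stable neighborhood; and $f\mu_g$ is in general not exact, so $\A^\lambda$ is a priori defined only after localizing to a proper subset of $S^2$.

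First I would set $c_0:=\A(\gamma)$ and, using stability \eqref{stableneighborhood}, build a working neighborhood $\V$ of the critical circle $S^1\cdot\gamma$ with: (i) $\inf_\V\A=c_0$ and $\inf_{\partial\V}\A>c_0$; and (ii) the evaluation image $\mathrm{ev}(\overline\V\times\T)$ a \emph{proper} compact subset of $S^2$. For this I take $\V$ to be the connected component through $S^1\cdot\gamma$ of $\{\A<a\}\cap\U^\circ$ with $a\in(c_0,\inf_{\partial\U}\A)$; then $\partial\V\subset\{\A=a\}$, which gives (i). A standard deformation argument based on the Palais--Smale condition for $\A$ shows that, as $a\downarrow c_0$, the set $\V$ is contained in any prescribed $H^1$-neighborhood of the connected component $M_0$ of the minimizer set $\{\A=c_0\}\cap\U$ containing $\gamma$; since $H^1(\T,S^2)\hookrightarrow C^0$, the image of $\V$ then lies in an arbitrarily small tubular neighborhood of $\mathrm{ev}(M_0\times\T)$, so (ii) reduces to showing $\mathrm{ev}(M_0\times\T)\neq S^2$.

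This properness is the main obstacle, and it is exactly where the possibly complicated (e.g. Cantor-like) topology of the critical set enters. Every element of $M_0$ is a simple closed geodesic of the same length $c_0$ minimizing $\A$ in $\U$. I would argue that two such geodesics which are $C^0$-close cannot cross transversally: otherwise a cut-and-paste exactly as at the end of the proof of Proposition~\ref{prop:persistencetorus} produces two broken loops, still contained in $\U$, whose total $\A$-action is $2c_0$, so one of them has action $\le c_0$, hence is a \emph{smooth} geodesic --- contradicting the corner at a transversal crossing. Thus $M_0$ decomposes $S^2$ locally into disjoint simple closed geodesics, i.e. a foliation, which is impossible on $S^2$ by the Euler characteristic obstruction; hence $\mathrm{ev}(M_0\times\T)$ is proper. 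Fixing $p\notin\mathrm{ev}(M_0\times\T)$, on the disk $S^2\setminus\{p\}\supset U:=\mathrm{ev}(\overline\V\times\T)$ we may write $f\mu_g=\diff\theta$ with $\theta$ bounded on the compact set $U$, so $\A^\lambda$ in \eqref{freeperiod2} is well defined on $\V$.

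It then remains to run the perturbation step, which is routine. On the bounded set $\overline\V$ put $r:=\sup_{(\Gamma,\tau)\in\overline\V}\|\dot\Gamma\|_2<\infty$; by Cauchy--Schwarz, exactly as in \eqref{equationlambdasmall}, $|\A^\lambda-\A|\le\lambda\|\theta\|_\infty r$ on $\overline\V$. Writing $2\delta:=\inf_{\partial\V}\A-c_0>0$ and $\Lambda:=\delta/(\|\theta\|_\infty r)$, for every $\lambda\in(0,\Lambda)$ one gets $\inf_{\partial\V}\A^\lambda>c_0+\delta>\A^\lambda(\gamma)\ge\inf_{\overline\V}\A^\lambda$. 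Since $\A^\lambda$ satisfies the Palais--Smale condition on bounded subsets of $H^1(\T,S^2)\times(0,+\infty)$, a minimizing sequence stays away from $\partial\V$ and converges to an interior point $x_\lambda\in\V$, which is therefore a critical point of $\A^\lambda$, i.e. a closed $(g,\lambda f)$-geodesic, and a local minimizer of $\A^\lambda$ lying in a small neighborhood of the waist. Setting $\gamma_\lambda:=x_\lambda$ and $\Lambda=\Lambda(g,f)$ proves the theorem. I expect the delicate point to be the properness of $\mathrm{ev}(M_0\times\T)$ --- the localization that makes $\A^\lambda$ meaningful despite the non-exactness of $f\mu_g$ and the possibly intricate structure of $M_0$ --- while the final perturbation is immediate once a good neighborhood $\V$ is in hand.
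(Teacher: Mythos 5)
Your overall architecture is sound, and it is genuinely different from the paper's. The paper deliberately avoids the true connected component $M_0$ of the minimizer set: it remarks explicitly that replacing $K_\U$ with the connected component of $\gamma$ ``does not make much sense'', because a metric neighborhood of $M_0$ cannot be expected to carry an action gap when other minimizers at the \emph{same} level accumulate on $M_0$. Instead it fixes a scale $\delta$ (from a Lebesgue-number argument applied to the local disjointness lemma), introduces the $\delta$-chain component $K_\delta(\gamma)$, proves via a minimal-subfamily argument and the Sasaki-to-$H^1$ estimate \eqref{eq:sasakih1} that elements of $K_\delta(\gamma)$ are disjoint from $\gamma$ (Lemma \ref{lem:deltaconnected}), and builds the good neighborhood $\V$ by an explicit finite covering (Lemma \ref{lem:final}); properness of the evaluation image then follows because all curves in $K_\V$ avoid $\gamma$ and hence lie in one of the two disks it bounds. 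Your sublevel-component trick neatly sidesteps the covering construction: for the component $\V$ of $\{\A<a\}\cap\U^\circ$ through $\gamma$, once $a<\inf_{\partial\U}\A$ one gets $\partial\V\subset\{\A=a\}$ for free, so the action gap is automatic. Your endgame (no partition of $S^2$ by pairwise disjoint simple closed geodesics, via the line-field/Euler-characteristic obstruction) is also fine, modulo a step you elide: the cut-and-paste argument only rules out crossings of $C^0$-\emph{close} pairs, since the broken loops must remain in $\U$ for minimality to force smoothness. Global pairwise disjointness inside $M_0$ therefore needs a bootstrap, e.g.\ showing that $\{\nu\in M_0 : \nu \text{ disjoint from or equal to } \gamma_1\}$ is clopen in $M_0$ (open by the local lemma; closed because a tangency of two geodesics forces equality) and invoking connectedness of $M_0$. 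Note that this bootstrap is available precisely because you passed to the genuine component; it fails for the paper's $K_\delta(\gamma)$, which is why the paper needs its chain lemma.

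The genuine gap is the step you wave through as ``a standard deformation argument'': that the component $\V_a$ of $\{\A<a\}\cap\U^\circ$ through $\gamma$ is contained in an arbitrarily small $H^1$-neighborhood of $M_0$ as $a\downarrow c_0$. This is exactly where the possibly Cantor-like structure of $K_\U$ bites, and no off-the-shelf deformation lemma gives it: a gradient-flow deformation only pushes $\V_a$ toward the \emph{full} critical set at level $c_0$, and a connected neighborhood of a compact set can contain chain-connected pieces far from the single component $M_0$; moreover $\overline{\V_a}$ is not compact, so the finite-dimensional nested-intersection argument is unavailable. The claim is nevertheless true and can be proved as follows. Since $K_\U$ is compact, components coincide with quasicomponents, so for every $\epsilon>0$ there is a relatively clopen $A\subset K_\U$ with $M_0\subset A\subset B_{\epsilon/2}(M_0)$ and $d:=\mathrm{dist}\left(A,\,K_\U\setminus A\right)>0$. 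The ``fence'' $F:=\{x\in\overline\U \,:\, \mathrm{dist}(x,A)=\min(d,\epsilon)/2\}$ is uniformly far from the whole level set $K_\U$, whence $\inf_F\A>c_0$: otherwise a minimizing sequence on $F$ would, by Ekeland's principle combined with the Palais--Smale condition on bounded sets, produce a limit point in $\{\A=c_0\}\cap\U=K_\U$ at positive distance from $K_\U$, a contradiction. Choosing $a$ below $\inf_F\A$, the connected set $\V_a$ contains $\gamma$ and avoids $F$, hence stays in $B_\epsilon(M_0)$ by the intermediate value theorem applied to $\mathrm{dist}(\cdot,A)$ along $\V_a$. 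With this lemma and the disjointness bootstrap supplied, your proof closes and is arguably more conceptual than the paper's; as written, however, the pivotal localization is asserted rather than proved, and you mislocate the delicacy --- the foliation obstruction is the soft part, the localization of $\V_a$ near $M_0$ is the hard one.
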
 

Combining Theorem \ref{thm:waistthm} with either Corollary \ref{cor:Zoll} or the argument in the first paragraph of the proof of Proposition \ref{prop:fconstant} we can confirm the validity of Conjecture (Z) in case of metrics on $S^2$ admitting a stable waist.

\begin{cor}\label{cor:stable_waist}
Let $(g,f)$ be a magnetic system on $S^2$, where $g$ is a metric possessing a stable waist. Then there exists $\lambda>0$ such that $(g,\lambda f)$ is not Zoll. \qed
\end{cor}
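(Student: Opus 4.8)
The plan is to establish the stronger statement that for \emph{every} $\lambda\in(0,\Lambda)$, with $\Lambda=\Lambda(g,f)$ the constant from Theorem \ref{thm:waistthm}, the system $(g,\lambda f)$ is not Zoll. Fix such a $\lambda$ and let $\gamma=\gamma_\lambda$ be the closed $(g,\lambda f)$-geodesic furnished by Theorem \ref{thm:waistthm}: by the construction, $\gamma$ realizes the minimum of $\A^\lambda$ over a bounded neighborhood $\U\subset H^1(\T,S^2)\times(0,+\infty)$ of the critical circle $S^1\cdot\gamma$, with $\inf_{\partial\U}\A^\lambda>\A^\lambda(\gamma)$, and we may take $\gamma$ to lie in a small neighborhood of the waist. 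Assume, towards a contradiction, that $(g,\lambda f)$ is Zoll. I would then reproduce, with $\A^\lambda$ in place of $\A$, the crossing argument used in the first paragraph of the proof of Proposition \ref{prop:fconstant} and at the end of the proof of Proposition \ref{prop:persistencetorus}.

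First I would produce a competing geodesic. Because $(g,\lambda f)$ is Zoll, the magnetic geodesic $\nu$ starting at $q:=\gamma(0)$ with initial velocity $v'$ close to but distinct from $\dot\gamma(0)$ is again a prime closed $(g,\lambda f)$-geodesic, and by the Zoll property $\A^\lambda$ takes the same value on all prime closed geodesics (as recalled just before Proposition \ref{prop:fconstant}), so $\A^\lambda(\nu)=\A^\lambda(\gamma)$. Since $v'\neq\dot\gamma(0)$ the curves $\gamma$ and $\nu$ cross transversally at $q$; as every closed curve on $S^2$ is null-homologous, their mod-$2$ intersection number vanishes, so the number of transversal crossings is even and hence $\gamma$ and $\nu$ meet transversally in at least a second point $P_2$. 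Working in a neighborhood $U\subsetneq S^2$ of the waist on which $f\mu_g=\diff\theta$ for a bounded primitive $\theta$---and noting that, $\nu$ being $C^1$-close to $\gamma$, all curves occurring below stay in $U$---the functional $\A^\lambda$ of \eqref{freeperiod2} is well defined and additive under concatenation of arcs.

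Next I would carry out the cut-and-paste surgery exactly as at the end of the proof of Proposition \ref{prop:persistencetorus}: splitting $\gamma$ and $\nu$ at $q$ and $P_2$ and recombining their four arcs produces two piecewise-smooth closed loops $\eta_1,\eta_2$ with
\[
\A^\lambda(\eta_1)+\A^\lambda(\eta_2)=\A^\lambda(\gamma)+\A^\lambda(\nu)=2\A^\lambda(\gamma).
\]
Hence at least one of them, say $\eta_1$, satisfies $\A^\lambda(\eta_1)\leq\A^\lambda(\gamma)$. As $\nu$ is $C^1$-close to $\gamma$, the loop $\eta_1$ lies in $\U$ and, because $\inf_{\partial\U}\A^\lambda>\A^\lambda(\gamma)\geq\A^\lambda(\eta_1)$, it is an interior point of $\U$; therefore $\A^\lambda(\eta_1)=\A^\lambda(\gamma)=\inf_\U\A^\lambda$, so $\eta_1$ minimizes $\A^\lambda$ over $\U$ and is thus a smooth closed $(g,\lambda f)$-geodesic. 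This contradicts the fact that $\eta_1$ has genuine corners at $q$ and $P_2$. Consequently $(g,\lambda f)$ is not Zoll for any $\lambda\in(0,\Lambda)$, proving the corollary. Alternatively, one may argue through Corollary \ref{cor:Zoll}: for a sequence $\lambda_n\downarrow0$ the embedded minimizers $\gamma_{\lambda_n}$ give $\mathcal E_n^0\neq\emptyset$ for all large $n$, so by part (b) the metric $g$ would be Zoll, and the same surgery performed at $\lambda=0$ excludes a waist on a Zoll sphere.

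The step I expect to be most delicate is keeping the whole argument local while controlling the second crossing. The non-exactness of $f\mu_g$ on $S^2$ forces one to work in a neighborhood $U$ of the waist with a local primitive $\theta$, and one must verify that $\nu$---and with it the recombined loops $\eta_1,\eta_2$---really remain inside both $U$ and the minimizing neighborhood $\U$; this is exactly where the $C^1$-closeness of $\nu$ to $\gamma$, together with the Zoll hypothesis (nearby orbits close up and share the same $\A^\lambda$-value), enters. The existence of the second intersection $P_2$ is then disposed of cleanly by the homological parity of transversal crossings on $S^2$.
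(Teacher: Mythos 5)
Your proof is correct and follows essentially the same route as the paper, which disposes of the corollary in one line by combining Theorem \ref{thm:waistthm} with exactly the two arguments you execute: the cut-and-paste surgery using the constancy of $\A^\lambda$ on prime closed orbits of a Zoll system (the first paragraph of the proof of Proposition \ref{prop:fconstant}, following the end of the proof of Proposition \ref{prop:persistencetorus}), or alternatively Corollary \ref{cor:Zoll}. Your added care in checking that the recombined loop $\eta_1$ stays in the minimizing neighborhood $\U$ and that $f\mu_g$ admits a local primitive near the waist is a genuine (and correctly handled) point that the paper leaves implicit.
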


Notice that a non-stable waist might disappear after an arbitrarily small perturbation of the metric. Take for instance a smooth sphere of revolution in 
$\R^3$ with profile function $\varphi:[-1,1] \to (0,1]$ such that $\varphi\equiv 1$ on $(-\epsilon,\epsilon)$ for some $\epsilon>0$. 
The closed geodesics given by parallels $\{z=t\}$, $t\in (-\epsilon,\epsilon)$, are indeed waists, 
but we can find arbitrarily small perturbations $\chi$ of $\varphi$ such that $\varphi + \chi$ has a unique maximum at $0$ and no other critical points.
The corresponding surface of revolution has no waists, and the closed geodesic given by $\{z=0\}$ is of mountain pass type.
Therefore, an analogue of Theorem \ref{thm:waistthm} for such waists is hopeless. 

In case the stable waist in Theorem \ref{thm:waistthm} is strict (isolated in the language of Section \ref{section:2}) the proof is identical to the proof of 
Proposition \ref{prop:persistencetorus} and will be omitted. Therefore, hereafter we can assume that all stable waists are non-isolated. 
We choose such a non-isolated stable waist $\gamma$ and fix a neighborhood $\U$ 
of $S^1\cdot \gamma$ satisfying \eqref{stableneighborhood}.
The main difficulty that we have to face here is given by the fact that, unlike in Proposition \ref{prop:persistencetorus}, we cannot expect the compact set 
$$K_\U:=\{\nu \in \U\, |\, \A(\nu)=\A(\gamma)\}$$ 
to be mapped into a proper compact subset of $S^2$ by the evaluation map, for local minimizers which are not global need not have disjoint image. 
Observe that the compactness of $K_\U$ follows from \eqref{stableneighborhood} combined with the fact that $\A$ satisfies the Palais-Smale condition.

In fact, the key part of the proof of Theorem \ref{thm:waistthm} is to show that we can find a possibly smaller neighborhood $\V\subset \U$ 
of $S^1\cdot \gamma$ such that \eqref{stableneighborhood} still holds and such that the closure of $\text{ev}(\V)$ is a proper subset of $S^2$. Once this is done, the proof becomes
identical to the one in the isolated case. 
\begin{figure}[h]
\begin{small}
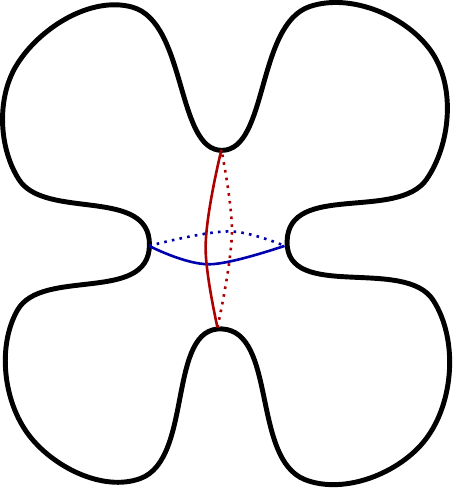 
\caption{The images of two waists need not be disjoint.}
\label{f:disjointness}
\end{small}
\end{figure}

As a first step we show that $\gamma$ is embedded, and that all $\nu\in K_\U$ which are sufficiently close to $\gamma$ must have pairwise disjoint image. For the next lemma we actually do not even need that $\gamma$ is stable.

\begin{lem}\label{lem:local_intersection}
Let $\gamma$ be a non-strict local minimizer of $\A$, and let $\U$ be a neighborhood of $S^1\cdot \gamma$ as 
in \eqref{stableneighborhood}. Then there exists a neighborhood $\W_\gamma\subset \U$ of $S^1\cdot \gamma$ 
such that all $\gamma_1\neq \gamma_2\in K_\U \cap \W_\gamma$ have disjoint image unless they belong to the same critical circle.
In particular, every non-strict local minimizer must be embedded.
\label{lem:nearbydisjoint}
\end{lem}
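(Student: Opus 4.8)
The plan is to run the cut-and-paste argument from the proof of Proposition \ref{prop:persistencetorus}, but now for \emph{local} minimizers, so that the spliced competitors must be kept inside the prescribed neighborhood $\U$. First I would shrink $\W_\gamma$ so that every $\nu \in K_\U \cap \W_\gamma$ is $C^1$-close to $\gamma$: since the elements of $K_\U$ are closed geodesics and $\A$ satisfies the Palais--Smale condition, $H^1$-closeness upgrades to $C^\infty$-closeness (a geodesic near $\gamma$ is determined by initial data near that of $\gamma$, and the geodesic flow is smooth). Two distinct closed geodesics can only meet transversally, since a tangency forces them to coincide by uniqueness of geodesics; and on $S^2$ the algebraic intersection number of two loops vanishes, so a pair with non-disjoint image meets in an even number $\geq 2$ of transversal points. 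Shrinking $\W_\gamma$ further also rules out the degenerate case that $\gamma_2$ is the orientation reversal of $\gamma_1$, since otherwise $\dot\gamma$ would be $C^0$-close to $-\dot\gamma$.

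The core step is the splice inside $\U$. Suppose $\gamma_1 \neq \gamma_2$ in $K_\U \cap \W_\gamma$ lie on distinct critical circles yet cross. I would represent them as normal graphs $s \mapsto \exp_{\gamma(s)}(u_i(s)\,\dot\gamma(s)^\perp)$ over the domain circle $\T$, using the (immersed) normal exponential map of $\gamma$; the crossings then correspond to the zeros of $w := u_1 - u_2$, which are finite, simple, and even in number. Between two consecutive zeros $s_a < s_b$ of $w$ I swap the ``upper'' and ``lower'' arcs to form two loops $\eta_1,\eta_2$, each of which still winds once and remains inside the normal tube, so that $\eta_1,\eta_2 \in \U$ once $\W_\gamma$ is small. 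Because splicing only redistributes the four arcs, and because $\A$ equals the length on geodesics, one has $\A(\eta_1) + \A(\eta_2) = \A(\gamma_1) + \A(\gamma_2) = 2\A(\gamma)$. Since $\A \geq \A(\gamma)$ on $\U$ by \eqref{stableneighborhood}, this forces $\A(\eta_1) = \A(\eta_2) = \A(\gamma)$, so each $\eta_i$ is a minimizer in $\U$, hence a smooth closed geodesic; but $\eta_1$ has a genuine corner at $\gamma_1(s_a) = \gamma_2(s_a)$ because the crossing is transversal, a contradiction. Therefore $w$ is nowhere zero and the images are disjoint.

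To conclude embeddedness I would use non-strictness, which supplies a minimizer $\gamma'$ arbitrarily close to $S^1 \cdot \gamma$ with $\gamma' \notin S^1 \cdot \gamma$; by the previous step $\gamma'$ is a nowhere-vanishing normal graph over $\gamma$, so $\gamma$ carries an honest tube and is embedded provided it has no self-intersection. Were $\gamma$ to self-intersect transversally at a point $p$, persistence of transversal intersections would make a branch of $\gamma'$ cross the complementary branch of $\gamma$ near $p$; then $\gamma'$ and $\gamma$ would have non-disjoint image although $\gamma' \neq \gamma$, contradicting the disjointness just established. Hence $\gamma$, and likewise every element of $K_\U$ close enough to it, is embedded.

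The main obstacle is exactly the constraint $\eta_1,\eta_2 \in \U$: unlike in the global minimization of Proposition \ref{prop:persistencetorus}, where any reconnection of two minimizers in a fixed homotopy class is again admissible, here an arbitrary splice of arcs of $\gamma_1$ and $\gamma_2$ may leave $\U$ (for instance by double-covering a sub-loop). This is what forces the passage to the normal-graph picture and the splicing between \emph{consecutive} crossings, the only recipe guaranteeing once-around competitors confined to the tube. The delicate point concealed in this step is that such control is genuinely available only in the embedded regime, so that ruling out the ``spurious'' crossings created by a self-intersection of $\gamma$, and hence proving embeddedness, must be interlaced with the disjointness argument rather than obtained before it.
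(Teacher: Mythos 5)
Your core splicing step is the same mechanism as the paper's own proof: cut two intersecting minimizers at two intersection points, recombine, use $\A(\eta_1)+\A(\eta_2)=\A(\gamma_1)+\A(\gamma_2)=2\A(\gamma)$ together with $\inf_\U \A=\A(\gamma)$ and $\inf_{\partial\U}\A>\A(\gamma)$ to force both recombined loops to be interior minimizers, hence smooth closed geodesics, contradicting the corners at the transversal crossings. Your normal-graph bookkeeping is in fact \emph{more} explicit than the paper about why the competitors stay in $\U$ (each $\eta_i$ is assembled from arcs $C^1$-close to the corresponding arcs of $\gamma$, hence is $H^1$-close to the critical circle; note that ``stays in the normal tube'' alone would not suffice, since $\U$ is a neighborhood in loop space, not a condition on images). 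The genuine gap is the inference ``$w$ is nowhere zero, therefore the images are disjoint,'' and the embeddedness step that leans on it. Identifying crossings of $\gamma_1,\gamma_2$ with zeros of $w=u_1-u_2$ is valid only when the normal tube of $\gamma$ is embedded. If $\gamma$ has a transversal self-intersection at $p=\gamma(a)=\gamma(b)$, then two graphs over $\gamma$ can intersect near $p$ at \emph{cross-sheet} parameter pairs $(\approx a,\approx b)$ and $(\approx b,\approx a)$ with $w$ nowhere vanishing; indeed, for a nowhere-vanishing graph $\gamma'$ this necessarily happens, since the branch of $\gamma'$ shadowing one branch of $\gamma$ must cross the other branch of $\gamma$ near $p$. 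Your third paragraph then contradicts exactly such a crossing by appealing to ``the disjointness just established'' --- but what was established is only same-sheet disjointness ($w\neq 0$), which is perfectly compatible with that crossing. The argument is circular, and your closing paragraph acknowledges the need to ``interlace'' the two steps without ever performing the interlacing; the figure-eight scenario (a non-embedded non-strict minimizer) is therefore not excluded by your proof.

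The paper's route differs precisely here: it never sets up the graph identification, but splices at arbitrary parameter pairs $(t_i^n,s_i^n)$ with $\gamma(t_i^n)=\gamma_n(s_i^n)$ along sequences $\gamma_n\to\gamma$ in $K_\U$ (its parenthetical about two figure-eight curves meeting at a single point shows it has the cross-sheet configuration in view), so its disjointness statement is proved without presupposing embeddedness, and embeddedness then drops out as a genuine corollary: a curve $C^1$-close to $\gamma$ must cross $\gamma$ near any transversal self-intersection, which the already-proved disjointness forbids. To repair your write-up you would have to treat the cross-sheet pairs directly; note that there the two recombinations are close to \emph{lobe-doubled} loops rather than to $\gamma$ itself, so keeping the competitors inside $\U$ needs an extra argument --- this is exactly the step your graph picture cannot reach and that your proposal leaves open.
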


\begin{proof}
We first show that if $\{\gamma_n\}\subset K_\U$ is a sequence such that $\gamma_n\to \gamma$
in $H^1$ (and hence in $C^\infty$) then eventually $\gamma_n$ and $\gamma$ must have disjoint images unless they belong to the same critical circle. Indeed, suppose by contradiction that (up to taking a subsequence) $\gamma_n$ and $\gamma$ intersect transversally for every $n\in \N$. Since the image of $\gamma_n$ is contained in an annular region around the image of $\gamma$, we 
have that $\gamma_n$ and $\gamma$ must intersect at least twice (even though the intersection point might be unique, as one easily sees by taking two figure 
eight curves in the plane intersecting at zero). Thus we can find $t_0^n<t_1^n$ and $s_0^n<s_1^n$ such that 
$$\gamma(t_0^n) = \gamma_n(s_0^n),\quad \gamma(t_1^n) = \gamma_n (s_1^n)$$
and define the curves 
$$\eta_{n,1} := \gamma |_{[t_0^n,t_1^n]} \# \gamma_n |_{[s_1^n,s_0^n]}, \quad \eta_{n,2} := \gamma_n |_{[s_0^n,s_1^n]}\#\gamma |_{[t_1^n,t_0^n]}.$$
Now the same line of arguments as in the end of the proof of Proposition \ref{prop:persistencetorus} yields a contradiction.

Now suppose that we can find sequences $\{\gamma_n\}$ and $\{\gamma_n'\}$ in $K_\U$ that converge
to $\gamma$ and such that $\gamma_n$ and $\gamma_n'$ intersect for every $n\in\N$. By the Jordan curve theorem $\gamma_n$ and $\gamma_n'$ must intersect 
at least twice. Arguing as above, we see that $\gamma_n$ and $\gamma_n'$ are eventually the same geometric curve. 
\end{proof}

In particular, the evaluation map maps $\W_\gamma$ to a proper subset of $\Sigma$. However, Lemma \ref{lem:nearbydisjoint} is not quite enough to complete the proof of Theorem \ref{thm:waistthm} since it is a priori not clear that the infimum of $\A$ over  
$\partial \W_\gamma$ is strictly larger than $\A(\gamma)$. Recall indeed that all elements in $K_\U$ are non-isolated; for the same reason, we see that it does not 
make much sense to replace $K_\U$ with the connected component of $\gamma$ in $K_\U$. We will overcome this difficulty proving   
an analogue of Lemma \ref{lem:nearbydisjoint} for a suitable subset of $K_\U$ which can be seen as a generalized connected component of $\gamma$ in $K_\U$, and which 
we now define.

Lemma \ref{lem:nearbydisjoint} implies that for all $\nu\in K_\U$ we find a neighborhood $\W_\nu$ such that every $\gamma_1,\gamma_2\in \W_\nu \cap K_\U$ 
either have disjoint image or they are the same geometric curve. Since $\{\W_\nu \, |\,\nu \in K_\U\}$ is an open covering of $K_\U$ and $K_\U$ is compact,
we can find $\nu_1,...,\nu_\ell\in K_\U$ such that 
$$K_\U\subset \bigcup_{i=1}^\ell \W_{\nu_i}.$$
By Lebesgue's number lemma we see now that there exists $\rho>0$ such that, for all $\nu\in K_\U$, every two $\gamma_1,\gamma_2 \in B_\rho(S^1\cdot \nu)\cap K_\U$ 
either have disjoint image or are the same geometric curve.

\begin{dfn}
Fix $\delta<\rho$. Two elements $\gamma_0,\gamma_1\in K_\U$ are said $\delta$-\textit{connected} if there exist $M\in \N$ and a family $\{\eta_i\}_{i=0,...,M}\subset K_\U$
such that $\gamma_0=\eta_0$, $\gamma_1=\eta_M$, and 
$\eta_{i+1}\in B_\delta (S^1\cdot \eta_i)\setminus S^1\cdot \eta_i, \ \forall i=0,...,M-1.$
\label{dfn:deltaconnected}
\end{dfn}


\begin{lem}
For $\delta<\rho$ small enough, every $\nu\in K_\delta(\gamma)$,
$$K_\delta(\gamma) := \{\nu \in K_\U \ |\ \nu,\gamma \ \text{are} \ \delta\text{-connected}\},$$ 
does not intersect $\gamma$, unless it belongs to $S^1 \cdot \gamma$. 
\label{lem:deltaconnected}
\end{lem}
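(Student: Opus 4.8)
The plan is to argue by contradiction, using a minimal chain realizing the $\delta$-connectedness of Definition \ref{dfn:deltaconnected}; the crux is to convert mere set-theoretic proximity of a curve to $\gamma$ into genuine $H^1$-proximity, so that the local disjointness statement of Lemma \ref{lem:nearbydisjoint} can be propagated along the chain.

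The heart of the matter is the following rigidity statement, which I would establish first: there is $\e_0>0$ such that every $\nu\in K_\U$ whose image is \emph{disjoint} from that of $\gamma$ but satisfies $\dist(\mathrm{im}\,\nu,\mathrm{im}\,\gamma)<\e_0$ already lies in the neighborhood $\W_\gamma$ of $S^1\cdot\gamma$ provided by Lemma \ref{lem:nearbydisjoint}. Indeed, if this failed there would be a sequence $\nu_k\in K_\U\setminus\W_\gamma$, each disjoint from $\gamma$, with $\dist(\mathrm{im}\,\nu_k,\mathrm{im}\,\gamma)\to 0$; by compactness of $K_\U$ and the fact (already used in the proof of Lemma \ref{lem:nearbydisjoint}) that $H^1$-convergence of elements of $K_\U$ is $C^\infty$-convergence, a subsequence would converge to some $\nu_\ast\in K_\U\setminus\W_\gamma$ whose image meets that of $\gamma$. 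Since each $\nu_k$ is disjoint from $\gamma$ and $\nu_k\to\nu_\ast$ in $C^1$, the limit $\nu_\ast$ cannot cross $\gamma$ transversally, as a transversal intersection would persist for large $k$; hence $\nu_\ast$ is tangent to $\gamma$ at some point. As elements of $K_\U$ and $\gamma$ are closed geodesics, and a geodesic is determined by any one of its tangent vectors, tangency forces $\nu_\ast\in S^1\cdot\gamma\subset\W_\gamma$, a contradiction. Shrinking, I fix a smaller neighborhood $\W_\gamma'$ with $\overline{\W_\gamma'}\subset\W_\gamma$ and run the same argument to obtain a corresponding $\e_0'$.

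Next I would choose $\delta<\rho$ so small that the Sobolev embedding $H^1\hookrightarrow C^0$ gives a constant $C$ with $C\delta<\e_0'$, and so that every curve within $H^1$-distance $\delta$ of $\W_\gamma'$ lies in $\W_\gamma$. Assume for contradiction that some $\nu\in K_\delta(\gamma)\setminus S^1\cdot\gamma$ meets $\gamma$, and pick such a $\nu$ together with a realizing $\delta$-chain $\gamma=\eta_0,\dots,\eta_M=\nu$ for which $M$ is minimal (so $M\geq 1$). By minimality each $\eta_i$ with $i<M$ is either disjoint from $\gamma$ or lies in $S^1\cdot\gamma$, since otherwise it would be a counterexample with a shorter chain. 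Examine $\eta_{M-1}$. If $\eta_{M-1}\in S^1\cdot\gamma$, then by the Lebesgue-number property preceding Definition \ref{dfn:deltaconnected} the consecutive curves $\eta_{M-1}$ and $\nu$, distinct as geometric curves and lying within $B_\rho$ of one another, have disjoint image, so $\nu$ is disjoint from $\gamma$, a contradiction. Otherwise $\eta_{M-1}$ is disjoint from $\gamma$; since $\nu=\eta_M\in B_\delta(S^1\cdot\eta_{M-1})$, the image of $\nu$ lies within $C\delta$ of that of $\eta_{M-1}$, and because $\nu$ meets $\gamma$ we obtain $\dist(\mathrm{im}\,\eta_{M-1},\mathrm{im}\,\gamma)<C\delta<\e_0'$. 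The rigidity statement then forces $\eta_{M-1}\in\W_\gamma'$, whence $\nu\in\W_\gamma$, and Lemma \ref{lem:nearbydisjoint} applied to the distinct geometric curves $\gamma,\nu\in K_\U\cap\W_\gamma$ yields that their images are disjoint, contradicting that $\nu$ meets $\gamma$.

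The main obstacle is exactly the first step: a priori a member of $K_\U$ that is far from $\gamma$ in $H^1$ may still approach $\mathrm{im}\,\gamma$ arbitrarily closely in $S^2$, so Lemma \ref{lem:nearbydisjoint} cannot be invoked directly. It is the rigidity of geodesics together with the $C^\infty$-compactness of $K_\U$ that upgrades set-theoretic proximity to $H^1$-proximity and lets the minimal-chain bookkeeping close the argument; no use of the complementary disks of $\gamma$ in $S^2$ is needed.
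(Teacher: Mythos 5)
Your argument is correct in its overall structure, but it takes a genuinely different route from the paper's. The paper does not isolate a ``rigidity'' statement at $\gamma$; instead it works quantitatively along the whole chain: using the estimate \eqref{eq:sasakih1} (Sasaki distance between initial tangent vectors controls $H^1$-distance, uniformly on the compact set $K_\U$), it picks a \emph{minimal intersecting subfamily} $\{\eta_j,\dots,\eta_{j+\ell}\}$ of the chain, observes that at an intersection point of $\eta_j$ and $\eta_{j+\ell}$ the crossing must be nearly tangential --- because $\eta_{j+\ell}$ has to avoid the nearby disjoint curve $\eta_{j+1}$ --- so that the Sasaki distance of the tangent vectors is some $\e(\delta)\to 0$, concludes $\dist_{H^1}(\eta_j,\eta_{j+\ell})<C\e<\rho$, hence that $\eta_j$ and $\eta_{j+\ell}$ are the same geometric curve, and collapses the chain recursively. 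Your version replaces this by a soft compactness argument localized at $\gamma$ (disjoint curves in $K_\U$ whose images approach $\mathrm{im}\,\gamma$ must lie in the neighborhood $\W_\gamma$ of Lemma \ref{lem:nearbydisjoint}) together with a minimality analysis of only the \emph{last} link $\eta_{M-1}$ of the chain. What your approach buys is that you avoid the delicate geodesic--annulus geometry behind the paper's claim that the crossing angle is small; what the paper's approach buys is uniformity: its collapsing step works at \emph{any} intersecting pair along the chain and does not need the Sobolev constant bookkeeping ($C\delta<\e_0'$, collar between $\W_\gamma'$ and $\W_\gamma$) that your minimal-chain step requires. Both proofs hinge on the same principle --- upgrading positional proximity of geodesics to phase-space ($H^1$) proximity --- and both use the Lebesgue number $\rho$ in the same way.

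There is one soft spot you should address: in your rigidity step you assert that tangency of the limit $\nu_\ast$ to $\gamma$ forces $\nu_\ast\in S^1\cdot\gamma$, ``since a geodesic is determined by any one of its tangent vectors.'' Tangency only determines the tangent \emph{line}: you could have $\dot\nu_\ast(s_0)=-\dot\gamma(t_0)$, in which case $\nu_\ast$ lies on the critical circle of the orientation-reversed curve $\bar\gamma$ (which has the same action, and may well lie in $\U$ and hence in $K_\U\setminus\W_\gamma$), and no contradiction results. A sequence of curves disjoint from $\mathrm{im}\,\gamma$ can perfectly well $C^1$-accumulate onto the reversed parametrization, so this case cannot be dismissed by the convergence argument alone. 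To be fair, the paper's own proof makes the same tacit alignment assumption: its assertion that the Sasaki distance between $(\eta_j(0),\dot\eta_j(0))$ and $(\eta_{j+\ell}(t),\dot\eta_{j+\ell}(t))$ is small fails if the near-tangential crossing is near \emph{anti}-parallel, and the paper silently concludes ``same geometric curve.'' The discrepancy is harmless for the way Lemma \ref{lem:deltaconnected} is used (in Lemma \ref{lem:final} and Theorem \ref{thm:waistthm} only the \emph{images} of the elements of $K_\delta(\gamma)$ matter, and the reversed curve has the same image as $\gamma$), so the clean fix is to weaken the conclusion you prove --- and correspondingly the dichotomy in your rigidity statement --- to ``$\nu$ either has image disjoint from $\gamma$ or is the same geometric curve as $\gamma$,'' handling the aligned and anti-aligned tangencies symmetrically (e.g.\ by applying Lemma \ref{lem:nearbydisjoint} also at $\bar\gamma$ when $\bar\gamma\in K_\U$). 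With that modification your proof goes through.
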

\begin{proof}
We first observe that the $C^\infty$-dependence of geodesics on the initial conditions, combined with the fact that $K_\U$ is compact, 
yields that there exists $C>0$ such that the following holds for all $\gamma_1,\gamma_2\in K_\U$:
\begin{align}
\text{dist}_{g_{TS^2}} ((\gamma_1(0),\dot \gamma_1(0)), ((\gamma_2(0),\dot \gamma_2(0))<\epsilon \ &\Rightarrow \ \dist_{H^1} (\gamma_1,\gamma_2)<C \epsilon.
\label{eq:sasakih1}
\end{align}

Choose now $\delta<\rho$ and let $\nu \in K_\delta(\gamma)$. Our aim is to show that $\nu$ and $\gamma$ have disjoint image if they are not the same geometric curve. 
Thus, suppose that $\nu(\cdot)\cap \gamma(\cdot)\neq \emptyset$, and let $M\in \N$ and $\{\eta_i\}_{i=1}^M$ be a sequence as in Definition \ref{dfn:deltaconnected}. 
By construction, $\eta_i$ and $\eta_{i+1}$ have disjoint image for every $i=0,...,M-1$. We now consider a subfamily $\{\eta_j,...,\eta_{j+\ell}\}$ of $\{\eta_i\}$ which is minimal 
among the subfamilies (of the same form) that satisfy the following properties:
\begin{enumerate}[i)]
\item $\eta_j$ and $\eta_{j+\ell}$ intersect.
\item $\eta_{j+r}$ and $\eta_{j+s}$ have disjoint image for all $r< s$ with $r\neq 1$ and $s\neq \ell$.
\end{enumerate} 

Let $\eta_j(0)=\eta_{j+\ell}(t)$ be an intersection point. Since $\eta_{j+\ell}$ and $\eta_{j+1}$ have disjoint image,
we deduce that 
$$\text{dist}_{g_{TS^2}} ((\eta_j(0),\dot \eta_j(0)), ((\eta_{j+\ell}(t),\dot \eta_{j+\ell}(t))<\epsilon$$
for some $\epsilon>0$ which, by compactness of $K_\U$, only depends on $\delta$ and goes to zero as $\delta \downarrow 0$. Then, by \eqref{eq:sasakih1}
$$\dist_{H^1} (\eta_j, \eta_{j+\ell}) < C\epsilon,$$
and hence $\eta_j$ and $\eta_{j+\ell}$ are the same geometric curve, provided we chose $\delta<\rho$ so small that $C\epsilon<\rho$. Therefore, 
the subfamily $\{\eta_j,...,\eta_{j+\ell}\}$ can be removed from $\{\eta_i\}$. Applying this procedure recursively we obtain that $\gamma$ and $\nu$ are the same geometric curve.
\end{proof}

\begin{lem}
For $\delta>0$ as in Lemma \ref{lem:deltaconnected} 
we can find a bounded neighborhood $\V\subset \U$ of $S^1\cdot \gamma$ such that 
$$\inf_{\partial \V}\A > \A(\gamma), \quad \text{and}\quad  K_\V := K_\U \cap \V = K_\delta(\gamma).$$
\label{lem:final}
\end{lem}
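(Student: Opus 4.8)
The plan is to take for $\V$ a small metric neighborhood of the generalized component $K_\delta(\gamma)$ inside $\U$, and to verify directly that it has the two required properties. The key input is a separation property of $K_\delta(\gamma)$ within $K_\U$. First I would note that, since the balls $B_\delta(S^1\cdot\eta)$ are $S^1$-invariant and $\gamma$ is non-isolated, the set $K_\delta(\gamma)$ is $S^1$-invariant (in particular $S^1\cdot\gamma\subseteq K_\delta(\gamma)$, chaining $\gamma$ to any $s\cdot\gamma$ through a nearby critical circle). Then, if $\mu\in K_\U$ satisfies $\dist_{H^1}(\mu,K_\delta(\gamma))<\delta$, pick $\nu\in K_\delta(\gamma)$ with $\mu\in B_\delta(S^1\cdot\nu)$: either $\mu\in S^1\cdot\nu\subseteq K_\delta(\gamma)$, or $\mu\in B_\delta(S^1\cdot\nu)\setminus S^1\cdot\nu$, in which case prepending $\mu$ to a chain realizing the $\delta$-connectedness of $\nu$ with $\gamma$ shows $\mu\in K_\delta(\gamma)$. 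Hence
$$\dist_{H^1}\big(K_\delta(\gamma),\, K_\U\setminus K_\delta(\gamma)\big)\geq \delta. \qquad (\star)$$
I would also record that $K_\U$ is compact and, since $\inf_{\partial\U}\A>\A(\gamma)=\inf_{\overline\U}\A$, that $K_\U\subset\U$ lies at positive distance from $\partial\U$.

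Next I would set $\V:=\big\{x\in\U \ :\ \dist_{H^1}(x,K_\delta(\gamma))<\tfrac{\delta}{2}\big\}$, an open bounded neighborhood of $S^1\cdot\gamma$ (as $S^1\cdot\gamma\subseteq K_\delta(\gamma)\subseteq K_\U\subset\U$). The identity $K_\V=K_\delta(\gamma)$ is then immediate: the inclusion $K_\delta(\gamma)\subseteq K_\U\cap\V$ is clear, while any $\mu\in K_\U\cap\V$ has $\dist_{H^1}(\mu,K_\delta(\gamma))<\delta/2<\delta$, so $(\star)$ rules out $\mu\in K_\U\setminus K_\delta(\gamma)$, forcing $\mu\in K_\delta(\gamma)$.

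It remains to prove $\inf_{\partial\V}\A>\A(\gamma)$, which is the main point. I would decompose $\partial\V\subseteq\{\dist_{H^1}(\cdot,K_\delta(\gamma))=\delta/2\}\cup(\partial\U\cap\overline\V)$. On the second piece one has $\A\geq\inf_{\partial\U}\A>\A(\gamma)$ by \eqref{stableneighborhood}. For the first piece I would argue by contradiction: a sequence $x_n$ on the shell $\{\dist_{H^1}(\cdot,K_\delta(\gamma))=\delta/2\}$ with $\A(x_n)\to\A(\gamma)$ would be a minimizing sequence for $\A$ on $\overline\U$; exactly as in the gap estimate used in the proof of Proposition~\ref{prop:persistencetorus} (cf. \cite[Lemma 3.1]{Abbondandolo:2014rb}), Ekeland's variational principle together with the Palais--Smale condition for $\A$ produces, along a subsequence, a limit $x_*$ with $\A(x_*)=\A(\gamma)$ and $x_*\in\U$, so $x_*\in K_\U$. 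But $\dist_{H^1}(x_n,K_\delta(\gamma))=\delta/2$ for all $n$ forces $\dist_{H^1}(x_*,K_\delta(\gamma))=\delta/2$, whence $x_*\in K_\U\setminus K_\delta(\gamma)$, contradicting $(\star)$, which gives distance at least $\delta$. Thus both pieces of $\partial\V$ carry action bounded below by $\A(\gamma)$ plus a positive constant, and taking the minimum yields $\inf_{\partial\V}\A>\A(\gamma)$.

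The delicate step is this last one: one must upgrade the bare minimizing sequence on the shell into a genuine Palais--Smale sequence (this is where the completeness of the relevant sublevel set and the Palais--Smale property of $\A$ enter) so as to extract a limit lying in $K_\U$, and then play that limit against the separation $(\star)$. The remaining bookkeeping — the $S^1$-invariance of $K_\delta(\gamma)$, the compactness of $K_\U$, and the triangle-inequality estimates placing the shell at distance exactly $\delta/2$ from all of $K_\U$ — is routine.
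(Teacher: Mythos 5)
Your proposal is correct, but it reaches the conclusion by a genuinely different construction than the paper, and in fact a shorter one. The paper covers the compact set $K_\U$ by finitely many balls: balls $\overline{B_{\alpha_\nu}(\nu)}$ of adapted radii $\alpha_\nu=\dist(\nu,K_\delta(\gamma))\geq \delta$ around the elements \emph{not} $\delta$-connected to $\gamma$, together with $\delta/2$-balls around the $\delta$-connected elements sitting on the boundary of the bad balls; it then removes the resulting region $\mathcal A$ and takes for $\V$ the connected component of $\U\setminus \mathcal A$ containing $\gamma$, the key point being that $\partial \mathcal A$ contains no point of $K_\U$. Your separation estimate $(\star)$ is precisely the observation underlying the paper's unproved inequality $\alpha_\nu\geq\delta$ (a critical point within $\delta$ of a chain endpoint can be appended to the chain, after replacing the endpoint by a shift if it lies on the endpoint's critical circle), but you exploit it more directly: once $\dist_{H^1}\big(K_\delta(\gamma),K_\U\setminus K_\delta(\gamma)\big)\geq\delta$ is known, the $\delta/2$-tube around $K_\delta(\gamma)$ already does the job and the entire covering apparatus is unnecessary. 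Your version also makes explicit a step the paper leaves implicit: knowing $\partial\V\cap K_\U=\emptyset$ yields $\inf_{\partial\V}\A>\A(\gamma)$ only through the Palais--Smale condition, which you supply via Ekeland's principle (the paper silently uses the same mechanism when asserting $\inf_{\partial\mathcal A}\A>\A(\gamma)$, in the spirit of its earlier appeal to \cite[Lemma 3.1]{Abbondandolo:2014rb}). Two routine points should be recorded to make your argument airtight: the case $\mu\in S^1\cdot\nu$ and the $S^1$-invariance of $K_\delta(\gamma)$ require $K_\U$ itself to be $S^1$-invariant, which one arranges by replacing $\U$ with the union of its $S^1$-translates (the shift action is isometric for $\dist_{H^1}$ and $\A$-invariant, so \eqref{stableneighborhood} persists, and your chain from $\gamma$ to $s\cdot\gamma$ through a nearby critical circle then uses exactly the non-isolatedness assumed at this stage of the paper); and the Ekeland step needs $\overline\U$ complete, i.e.\ bounded away from $\tau=0$ in $H^1(\T,S^2)\times(0,+\infty)$, which can likewise be arranged when choosing $\U$. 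With these normalizations (and with ``prepending'' read as ``appending''), your tube construction is a clean substitute for the paper's component-of-complement construction.
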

\vspace{-6mm}
\begin{proof}
Observe preliminarly that, if $\nu\in K_\U$ is not $\delta$-connected to $\gamma$, then every $\mu \in B_{\alpha_\nu}(\nu)\cap K_\U$ is not $\delta$-connected
to $\gamma$, where 
$$\delta \leq \alpha_\nu := \inf_{M\in \N} \inf_{\eta_0,...,\eta_{M}} \text{dist} (\eta_M,\nu),$$
and $\{\eta_i\}_{i=0,...,M}$ is any sequence with $\gamma=\eta_0$ and $\eta_{i+1}\in B_\delta( S^1\cdot \eta_i) \setminus S^1\cdot \eta_i$ for all $i.$
For $\nu\in K_\U$ we consider $B_{\alpha_\nu}(\nu)$ if $\nu$ is not $\delta$-connected with $\gamma$, and $B_\delta(\nu)$ otherwise.
By compactness of $K_\U$ we find $\mu_1,...,\mu_\ell\in K_\U$ which are not $\delta$-connected with $\gamma$ and $\mu_{\ell+1},..., \mu_{\ell+r}\in K_\U$
which are $\delta$-connected with $\gamma$ such that 
$$K_\U \subset \bigcup_{i=1}^\ell B_{\alpha_{\mu_i}}(\mu_i) \ \cup \ \bigcup_{i=\ell +1}^r B_\delta(\mu_{\ell +i}).$$
For the sake of simplicity we assume that all balls are entirely contained in $\U$ (otherwise we can work with the intersection of the balls with $\U$). 
Notice that any $\mu$ which lies in 
$$K_\U' := K_\U \cap \left (  \bigcup_{i=1}^\ell \overline{B_{\alpha_{\mu_i}}(\mu_i)} \setminus  \bigcup_{i=1}^\ell B_{\alpha_{\mu_i}}(\mu_i)\right )$$
is $\delta$-connected to $\gamma$, that is, elements in $K_\U$ which are on the boundary of some $B_{\alpha_{\mu_i}}(\mu_i)$ but not in the 
interior of some other $B_{\alpha_{\mu_j}}(\mu_j)$ are necessarily $\delta$-connected with $\gamma$. 

If $ K_\U' =\emptyset$ then we take as $\V$ the connected component of $\U\setminus \cup_{i=1}^\ell \overline{B_{\alpha_{\mu_i}}(\mu_i)}$ 
that contains $\gamma$. Otherwise, for every $\mu\in K_\U'$ we consider the open ball $B_{\delta/2}(\mu)$. Again by compactness we 
can find finitely many $\lambda_1,...,\lambda_s\in K_\U'$ such that 
$$K_\U' \subseteq  \bigcup_{i=1}^s B_{\delta/2}(\lambda_i).$$
We now observe that by construction 
$$ \partial \Big  (\bigcup_{j=1}^s  B_{\delta/2}(\lambda_j)\Big ) \cap  \Big (\bigcup_{i=1}^\ell \overline{B_{\alpha_{\mu_i}}(\mu_i)}\Big )$$
cannot contain elements of $K_\U$. Therefore, for
$$\mathcal A :=  \left (  \bigcup_{i=1}^\ell \overline{B_{\alpha_{\mu_i}}(\mu_i)} \right ) \setminus \bigcup_{i=1}^s B_{\delta/2}(\lambda_i) $$
we have  $ \inf_{\partial\mathcal A} \A > \A(\gamma)$. 
The assertion follows now taking the component $\V$ of $\U\setminus \mathcal A$ that contains $\gamma$. 
\end{proof}

\begin{proof}[Proof of Theorem \ref{thm:waistthm}]
Let $\V$ be a neighborhood of the non-isolated stable waist $\gamma$ as in Lemma \ref{lem:final}. Then, since the image of any $\nu \in K_\V=K_\delta(\gamma)\setminus S^1\cdot \gamma$ 
is entirely contained in one of the two disks in which $S^2$ is divided by $\gamma$, and 
since all geodesics in $K_\V$ have the same length, Lemma \ref{lem:deltaconnected} implies that
$K_\V$ is mapped by the evaluation map into a subset of $S^2$ with proper closure, and hence up to shrinking $\V$ further we also have that the closure of the image 
of $\V$ under the evaluation map is a proper subset of $S^2$.
\end{proof}

Repeating the proof above word by word we obtain the following result on the persistence 
of contractible stable waists on arbitrary closed surfaces.

\begin{thm}
\label{thm:finish}
Let $g$ be a metric on a closed surface $\Sigma$ possessing a contractible stable waist. Then for every magnetic function $f$ there exists $\Lambda =\Lambda(g,f)>0$ 
such that for every $\lambda\in (0,\Lambda)$ 
the magnetic pair $(g,\lambda f)$ has a contractible closed $(g,\lambda f)$-geodesic $\gamma_{\lambda}$ which is a local minimizer 
of the functional $\A^\lambda$ in \eqref{freeperiod2}. \qed
\end{thm}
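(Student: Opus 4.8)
The plan is to rerun the proof of Theorem~\ref{thm:waistthm} essentially verbatim, tracking the two places where the topology of $S^2$ was used and checking that the contractibility of the waist replaces them on an arbitrary closed surface $\Sigma$. The first place is the very definition of the perturbed functional $\A^\lambda$ in \eqref{freeperiod2}, which needs a bounded primitive $\theta$ of $f\mu_g$ on the open set swept out by the competing loops. The second is the chain of Lemmas~\ref{lem:nearbydisjoint}, \ref{lem:deltaconnected} and~\ref{lem:final}, together with the concluding argument of Theorem~\ref{thm:waistthm}, all of which rest on Jordan-curve-theorem type statements and on the fact that an embedded closed curve separates $S^2$ into two disks.

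As in the proof of Theorem~\ref{thm:waistthm}, I would first dispose of the case in which the contractible stable waist $\gamma$ is strict. Here contractibility guarantees that a sufficiently small neighborhood $\U$ of $S^1\cdot\gamma$ satisfying \eqref{stableneighborhood} is carried by the evaluation map into a disk-like proper subset $U\subsetneq\Sigma$; hence $f\mu_g|_U=\diff\theta$ for a bounded one-form $\theta$, the functional $\A^\lambda$ is defined on loops with image in $U$, and the action estimate from the proof of Proposition~\ref{prop:persistencetorus} produces a local minimizer of $\A^\lambda$ in $\U$ for all small $\lambda$.

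For the non-isolated case I would verify that Lemmas~\ref{lem:nearbydisjoint}, \ref{lem:deltaconnected} and~\ref{lem:final} carry over unchanged. Their proofs only use that the waists in question are embedded, contractible and $C^\infty$-close to $\gamma$: contractibility provides a tubular annular neighborhood of $\gamma$ sitting inside the disk bounded by $\gamma$, so that two nearby intersecting embedded contractible competitors must cross at least twice, and the cut-and-paste exchange argument from the end of the proof of Proposition~\ref{prop:persistencetorus} then yields a strictly shorter competitor in the class of $\gamma$, a contradiction. With these lemmas in force, Lemma~\ref{lem:final} furnishes a bounded neighborhood $\V\subset\U$ of $S^1\cdot\gamma$ with $\inf_{\partial\V}\A>\A(\gamma)$ and $K_\V=K_\delta(\gamma)$, exactly as before.

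The one genuine point to check, and the step I expect to be the main obstacle, is the concluding claim that $\overline{\mathrm{ev}(\V)}$ is a proper subset of $\Sigma$. On $S^2$ this was immediate, since an embedded $\gamma$ cuts the sphere into two disks and every $\nu\in K_\V$ lies in one of them. On a general $\Sigma$ I would instead invoke contractibility directly: the embedded contractible curve $\gamma$ bounds an embedded disk $D\subset\Sigma$, and every element of $K_\V=K_\delta(\gamma)$ is contractible, disjoint from $\gamma$ by Lemma~\ref{lem:deltaconnected}, embedded, and $C^\infty$-close to $\gamma$, so its image lies in a fixed tubular neighborhood of $\gamma$ together with $D$, a proper subset of $\Sigma$. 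Shrinking $\V$ if necessary gives $\overline{\mathrm{ev}(\V)}\subsetneq\Sigma$, whence $f\mu_g$ is exact on this set and the strict-case argument applies word for word. Thus the separating/disk-bounding property that $S^2$ supplied for free is, on a general surface, recovered exactly from the contractibility of the waist, which is precisely why the hypothesis is needed away from $S^2$.
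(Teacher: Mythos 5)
Your overall strategy---rerunning the proof of Theorem \ref{thm:waistthm} and isolating the places where the topology of $S^2$ enters---is exactly the paper's approach (its proof of Theorem \ref{thm:finish} is literally ``repeating the proof above word by word''), and your handling of the strict case and of Lemmas \ref{lem:nearbydisjoint}, \ref{lem:deltaconnected} and \ref{lem:final} is sound. However, your concluding step has a genuine gap. You assert that every element of $K_\V=K_\delta(\gamma)$ is ``$C^\infty$-close to $\gamma$'' and deduce that its image lies in a fixed tubular neighborhood of $\gamma$ together with the disk $D$. But elements of $K_\delta(\gamma)$ are only $\delta$-\emph{chain}-connected to $\gamma$ (Definition \ref{dfn:deltaconnected}): they are endpoints of arbitrarily long chains of steps of size $\delta$ and need not be anywhere near $\gamma$---indeed the whole apparatus of $\delta$-connectedness in Section \ref{section:5} exists precisely because closeness to $\gamma$ fails for $K_\U$ globally (the paper warns explicitly that local minimizers which are not global need not have disjoint, or nearby, images). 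Your containment claim is in fact false in general: on a surface with a long cylindrical ``finger'' of constant radius capped off by a disk, the parallel circles are non-isolated contractible waists, pairwise $\delta$-chain-connected, and they sweep out the entire finger; those far along the finger lie outside any fixed tubular neighborhood of $\gamma\cup D$. So properness of $\overline{\mathrm{ev}(\V)}$ does not follow from your argument.

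The correct substitute, which is what the paper's word-by-word repetition relies on, does not use closeness to $\gamma$ at all: by Lemma \ref{lem:deltaconnected} every $\nu\in K_\V$ is disjoint from $\gamma$, and since the embedded contractible waist $\gamma$ bounds a disk $D$ and hence separates $\Sigma$ (this is where contractibility replaces the Jordan curve theorem on a general surface), each such $\nu$ lies entirely in $D$ or in $\Sigma\setminus\overline{D}$; combining this with the fact that all curves in $K_\V$ have the same positive length, one concludes as on $S^2$ that this family of disjoint embedded circles of fixed length cannot have image with dense closure, whence $\mathrm{ev}(K_\V)$, and after shrinking $\V$ also $\mathrm{ev}(\V)$, has proper closure. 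A minor further inaccuracy: in the strict case contractibility is not what gives properness of the evaluation image (a neighborhood of a single critical circle always has proper image, and $f\mu_g$ is exact on any proper open subset of a closed surface); there contractibility is only needed to guarantee that the magnetic geodesic produced is contractible, while its essential role is the separation property above---which is exactly what rules out the flat-torus foliation by non-contractible waists that the hypothesis is designed to exclude.
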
 

It would be very interesting to see whether analogues of Theorem \ref{thm:finish} and Proposition \ref{prop:persistencetorus}
hold for higher dimensional manifolds, in particular for those whose fundamental group is ameanable, as in this case 
the Ma\~n\'e critical value $c(g,f)$ is infinite whenever the magnetic function does not integrate to zero \cite{Merry:2010}.

\bibliography{_biblio}
\bibliographystyle{plain}
\end{document}